 \newtheorem{theorem}{Theorem}[section]
 \newtheorem{proposition}[theorem]{Proposition}
 \newtheorem{lemma}[theorem]{Lemma}
 \newtheorem{corollary}[theorem]{Corollary}
 \theoremstyle{definition}
 \newtheorem{definition}[theorem]{Definition}
 \theoremstyle{remark}
 \newtheorem{remark}[theorem]{Remark}
\numberwithin{equation}{section}
\renewcommand{\Re}{\operatorname{Re}}
\newcommand{\rg}{\operatorname{rg}}
\newcommand{\R}{\mathbb{R}}
\newcommand{\la}{\lambda}
\newcommand{\C}{\mathbb{C}}
\newcommand{\Hb}{\overline{\mathbb{H}}}
\newcommand{\mb}[1]{\textbf{#1}}
\newcommand{\mc}[1]{\mathcal{#1}}
\newcommand{\B}{\mathbb{B}}
\title[]{Stable blowup for the supercritical hyperbolic Yang-Mills equations}
\author{Irfan Glogi\'c}
\address{Department of Mathematics, University of Vienna, Oskar-Morgenstern-Platz 1, 1090 Vienna, Austria}
\email{irfan.glogic@univie.ac.at}
\thanks{The author acknowledges support by the Austrian Science Fund FWF, Projects P 30076 and P 34378.}
\begin{document}

\begin{abstract}
We consider the Yang-Mills equations in $(1+d)$-dimensional Minkowski spacetime. It is known that in the supercritical case, i.e., for $d \geq 5$, these equations admit closed form equivariant self-similar blowup solutions \cite{BieBiz15}. These solutions are furthermore conjectured to be the universal attractors for generic large equivariant data evolutions. In this paper we partially prove this conjecture. Namely, we show that for all odd $d \geq 5$ the blowup mechanism exhibited by these solutions is stable.
\end{abstract}

\maketitle
\section{Introduction}
\noindent We consider the connection $1$-forms $A=(A_0,A_1,\dots,A_d)$
with\footnote{Here, and throughout the paper, Greek indices run from $0$ to $d$, while Latin indices go from $1$ to $d$, and Einstein summation convention is in force. Also, the indices are raised and lowered with respect to the Minkowski metric $\eta_{\alpha\beta}:=\text{diag} (-1,1,\dots,1)$.} $A_\alpha: \R^{1+d} \mapsto \mathfrak{so}(d,\R)$, where $\mathfrak{so}(d,\R)$ is the Lie algebra of the Lie group $SO(d,\R)$, i.e., its elements are real skew-symmetric $(d\times d)$-matrices endowed with the commutator bracket.  Furthermore, the associated covariant derivative acting on $\mathfrak{so}(d,\R)$-valued functions is defined by
\begin{equation}
	{\bf D}_\alpha:=\partial_\alpha + [A_\alpha,\cdot],
\end{equation}
and the curvature tensor $F=F[A]$ is given by
\begin{equation}
	F_{\alpha\beta}= \partial_\alpha A_\beta - \partial_\beta A_\alpha + [A_\alpha,A_\beta].
\end{equation}
With this, we define the so-called Yang-Mills action functional
\begin{equation*}
	\mathcal{S}[A]:=\int_{\R^{1+d}} \text{tr} (F_{\alpha\beta}F^{\alpha\beta}). 
\end{equation*}
Subsequently, the associated Euler-Lagrange equations read
\begin{equation}\label{Eq:YM_general}
	\mathbf{D}^\alpha F_{\alpha\beta}=0,
\end{equation}
and are called the \emph{hyperbolic Yang-Mills equations}. Furthermore, solutions to Eq.~\eqref{Eq:YM_general} are called Yang-Mills connections on the trivial bundle $\R^{1+d} \times SO(d,\R)$.
Due to the Lorentzian nature of the Minkowski space, \eqref{Eq:YM_general} represents an evolution system in the zeroth slot variable $x^0$, which we, according to custom, denote by $t$. Consequently, the initial data consist of pairs $(\tilde{A},\tilde{B})$ of $\mathfrak{so}(d,\R)$-valued 1-forms on $\R^d$. This system is, however, under-determined due to the freedom of the choice of \emph{gauge}. To remove this ambiguity, we study the Cauchy problem for Eq.~\eqref{Eq:YM_general} in the so-called \emph{temporal gauge}, i.e., we look for solutions for which $A_0 \equiv 0$. And accordingly, we pose initial conditions
\begin{equation}\label{Eq:YM_gen_init_cond}
	A(0,\cdot)=\tilde{A}, \quad  \partial_0 A(0,\cdot)=\tilde{B}.
\end{equation}

Since $SO(d,\R)$ is noncommutative, the system \eqref{Eq:YM_general} is nonlinear, and this brings about an important question, namely that of \emph{regularity} versus \emph{breakdown}. Is it possible that perfectly smooth and localized initial data \eqref{Eq:YM_gen_init_cond} become singular in finite time? In this context, the relationship between the nonlinear scaling and the conserved energy leads to useful heuristics about the existence of blowup. Namely, the system \eqref{Eq:YM_general} is invariant under the rescaling $A \mapsto A_{\lambda}$, where
\begin{equation*}
	A_{\lambda,\alpha}(t,x):= \la^{-1} A_{\alpha}(t/\la,x/\la), \quad \la>0,
\end{equation*}
and the conserved (and positive definite) energy
\begin{equation*}
	\mathcal E[A](t):= -\sum_{0 \leq \alpha < \beta \leq d} \int_{\R^d}  \text{tr} ( F^2_{\alpha\beta}(t,x) )\,  dx,
\end{equation*}
scales like $\mathcal E[A_\la](t)=\la^{d-4} \mathcal E[A](t/\la)$. Consequently, the Yang-Mills equations \eqref{Eq:YM_general} are \emph{energy-subcritical} if $d \leq 3$, \emph{energy-critical} if $d=4$, and \emph{energy-supercritical} if $d \geq 5$.
The heuristic we mentioned above says that (even in the more general setting of classical field theories) subcritical equations are globally regular, but for supercritical ones, although solutions with sufficiently small initial data exist globally, large data solutions blowup in finite time, see Klainerman \cite{Kla97}.

The Yang-Mills system \eqref{Eq:YM_general} does, in fact, obey this heuristic in the physical dimension $d=3$. Namely, from the early works of Eardley and Moncrief \cite{EarMon82a,EarMon82b}, it is known that in this setting global regularity holds. On the other hand, in both the critical and supercritical case blowup is possible. This can be seen already in a subclass of the so-called \emph{equivariant solutions}. These are temporal gauge solutions that have the following form 
	 \begin{equation}\label{Def:Equiv_ansatz}
	 A^{ij}_k(t,x)=(\delta^j_k x^i - \delta^i_k x^j)u(t,|x|),
	 \end{equation}
	 for some $u(t,\cdot):[0,\infty) \rightarrow \R$, which we call the \emph{radial profile} at time $t$ of the equivariant 1-form $A$. The ansatz \eqref{Def:Equiv_ansatz} reduces the system \eqref{Eq:YM_general} to a $(d+2)$-dimensional semilinear radial wave equation for the profile $u$ (see \cite{Dum82} for derivation)
	 \begin{equation}\label{Eq:YM_equiv}
	 \displaystyle{\left( \partial_t^2 - \partial_r^2 - \frac{d+1}{r}\partial_r \right) u(t,r) =(d-2)u(t,r)^2 \big(3-r^2u(t,r)\big)},
	 \end{equation}	 
with the corresponding initial condition 
\begin{equation}\label{Eq:YM_equiv_init_cond}
	u(0,r)=u_0(r), \quad \partial_0u(0,r)=u_1(r),
\end{equation}
which is obtained from the equivariant initial data
\begin{equation}\label{Def:Equiv_init_data}
	\tilde{A}_k^{ij}(x) = (\delta^j_k x^i - \delta^i_k x^j)u_0(|x|), \quad \tilde{B}_k^{ij}(x) = (\delta^j_k x^i - \delta^i_k x^j)u_1(|x|).
\end{equation}

In the energy-critical case, $d=4$, Eq.~\eqref{Eq:YM_equiv} was numerically studied by Bizo\'n and Tabor \cite{BizTab01}, who observed singularity formation at the origin for generic large initial data. This was then followed by a rigorous proof of the  existence of blowup by Krieger, Schlag and Tataru \cite{KriSchTat09}. Later on, remarkably, Rapha\"el and Rodnianski \cite{RapRod12} constructed blowup that is stable.
	 
For the supercritical case, $d \geq 5$, the existence of blowup for Eq.~\eqref{Eq:YM_equiv} was first shown by Cazenave, Shatah and Tahvildar-Zadeh \cite{CazShaTah98}, who constructed self-similar solutions for $d \in \{ 5,7,9 \} $. They then lifted these solutions to one dimension higher to construct (non-equivariant) singular traveling waves in the temporal gauge, thereby exhibiting blowup for \eqref{Eq:YM_general} in the three lowest even supercritical dimensions as well. They, however, did not investigate the role of these solutions for generic evolutions. This issue was later approached from the numerical point of view in the aforementioned paper by Bizo\'n and Tabor \cite{BizTab01}. Their simulations show evidence that in the lowest supercritical dimension, $d=5$, the self-similar solution constructed by Cazenave et al.~is in fact a generic blowup profile. Later on, Bizo\'n \cite{Biz02} found this solution in closed form
\begin{equation*}
u_{0,T}(t,r):=\frac{1}{(T-t)^2}\frac{8}{3\rho^2+5}, \quad \rho=\frac{r}{T-t}, \quad T>0,
\end{equation*} 
and proved that in addition to it, there is a countable family of smooth self-similar profiles; this situation indicates the richness of blowup dynamics already within the equivariance class.

 A rigorous proof of the stability of $u_{0,T}$, however, came about more recently in the works of Costin, Donninger, Huang and the author \cite{Don14a,CosDonGloHua16}. In the meantime, Biernat and Bizo\'n \cite{BieBiz15} discovered the analogue of $u_{0,T}$ in all supercritical dimensions
\begin{equation}\label{Def:BB_sol}
u_T(t,r):=\frac{1}{(T-t)^2}\frac{\alpha(d)}{\rho^2+\beta(d)}, \quad \rho = \frac{r}{T-t}, \quad T>0,
\end{equation}
where 
\begin{equation}\label{Def:Alpha_Beta}
\alpha(d)=2\left(1+\sqrt{\tfrac{d-4}{3(d-2)}} \right) \quad \text{and} \quad \beta(d)=\frac{1}{3}\big(2d-8+\sqrt{3(d-2)(d-4)}\big).
\end{equation}
This, via \eqref{Def:Equiv_ansatz}, leads to a temporal gauge solution $A_T$ of the system \eqref{Eq:YM_general}
\begin{equation}\label{Def:BB_sol_vector}
(A_T(t,x))^{ij}_k:=(\delta^j_k x^i - \delta^i_k x^j)u_T(t,|x|),
\end{equation}
which suffers gradient blowup at the origin as $t \rightarrow T^-$.
Furthermore, based on numerical simulations, Biernat and Bizo\'n conjectured in \cite{BieBiz15} that $A_T$ is the generic blowup profile within the equivariant solution class. The main result of this paper provides the first step in proving this conjecture. More precisely, we show that for all odd dimensions $d \geq 5$ there is an open set of equivariant initial data around $(A_1(0,\cdot), \partial_0 A_1(0,\cdot))$ for which the Cauchy evolution of \eqref{Eq:YM_general}-\eqref{Eq:YM_gen_init_cond} blows up by converging back to $A_T$ for some $T$ close to 1. For the formal statement of the main result we use the following notation $$A[t]:=(A(t,\cdot),\partial_tA(t,\cdot)),$$ and for precise definitions of the solution concept and the blowup time we refer the reader to Sec.~\ref{Subsec:Well-posedness}.

\begin{theorem}\label{Thm:Main}
	Let $d \geq 5$ be odd. Then there exist $M,\varepsilon,\omega>0$ such that for any equivariant initial data $A[0]$ for which
	\begin{equation*}
		\| A[0]- A_1[0] \|_{H^\frac{d+1}{2} \times H^\frac{d-1}{2}(\mathbb{B}^d_{1+\varepsilon})} < \frac{\varepsilon}{M},
	\end{equation*}
	the corresponding time evolution for the Cauchy problem \eqref{Eq:YM_general}-\eqref{Eq:YM_gen_init_cond} forms a singularity at the origin in finite time and the following hold:
    \begin{itemize}
    	\item[(1)] the blowup time $T$ belongs to the interval $[1-\varepsilon,1+\varepsilon]$,
    	\item[(2)] the corresponding solution $A[t]$ satisfies
    	\begin{equation}\label{Eq:MainThmEst}
    		\frac{\| A[t]- A_T[t] \|_{\dot{H}^k \times \dot{H}^{k-1}(\mathbb{B}^d_{T-t})}}{\| A_T[t] \|_{\dot{H}^k \times \dot{H}^{k-1}(\mathbb{B}^d_{T-t})}} \lesssim   (T-t)^{\omega},
    	\end{equation}
    	for $k = 1,2, \dots ,(d+1)/2 $ and all $t \in [0,T)$. In particular,
    	\begin{equation}\label{Eq:MainThmEst_fullnorm}
    		\frac{\| A[t]- A_T[t] \|_{H^\frac{d+1}{2} \times H^\frac{d-1}{2}(\mathbb{B}^d_{T-t})}}{\| A_T[t] \|_{H^\frac{d+1}{2} \times H^\frac{d-1}{2}(\mathbb{B}^d_{T-t})}} \lesssim  (T-t)^{\omega},
    	\end{equation}
    	for all $t \in [0,T)$.
    \end{itemize}
	
\end{theorem}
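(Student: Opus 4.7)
The plan is to reduce the problem to a first-order dynamical system in self-similar coordinates in which the Biernat-Bizo\'n solution \eqref{Def:BB_sol} is stationary, analyze the linearized flow spectrally, and close the argument via a Lyapunov-Perron fixed point scheme. Since both the profile $A_T$ and the perturbed data are equivariant, the ansatz \eqref{Def:Equiv_ansatz} reduces \eqref{Eq:YM_general}-\eqref{Eq:YM_gen_init_cond} to the scalar semilinear wave equation \eqref{Eq:YM_equiv} for $u$. Introducing $\tau = -\log(T-t)$, $\rho = r/(T-t)$, and rescaling $\psi(\tau,\rho) := (T-t)^2 u(t,r)$ turns \eqref{Def:BB_sol} into the $\tau$-independent solution $\psi_*(\rho) := \alpha(d)/(\rho^2 + \beta(d))$. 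Writing $\psi = \psi_* + \varphi$ and restricting to the backward light cone $\{\rho \leq 1\}$ produces an abstract Cauchy problem $\partial_\tau \Phi = L\Phi + N(\Phi)$ on a Hilbert space $\mathcal{H}$ modeled on $H^{(d+1)/2} \times H^{(d-1)/2}(\mathbb{B}^d)$ (the norm dictated by \eqref{Eq:MainThmEst_fullnorm}), where $L = L_0 + L'$, with $L_0$ the free self-similar wave generator in ambient spatial dimension $d+2$ and $L'$ a smooth, relatively compact multiplication operator encoding the linearization at $\psi_*$.

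Next, I would establish that $L$ generates a strongly continuous semigroup $(S(\tau))_{\tau \geq 0}$ on $\mathcal{H}$ with good spectral properties. Since $d$ is odd, so is the effective dimension $d+2$, and a Lumer-Phillips-type dissipative estimate for $L_0$ then yields a uniform growth bound strictly below $1/2$ in the chosen norm. Compactness of $L' : \mathcal{H} \to \mathcal{H}$ forces the spectrum of $L$ in the half-plane $\{\Re \lambda > -\delta\}$ to consist of finitely many isolated eigenvalues of finite algebraic multiplicity.

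The heart of the argument is the mode stability statement: for every odd $d \geq 5$, the only spectral point of $L$ in $\{\Re \lambda \geq 0\}$ is a simple eigenvalue $\lambda = 1$, whose eigenspace is spanned by the explicit symmetry mode obtained by differentiating the family $\{u_T\}_{T>0}$ in $T$ at $T=1$ and passing to similarity coordinates. Separating variables recasts $L\Phi = \lambda \Phi$ as a second-order linear ODE in $\rho$ on $[0,1]$ with regular singular points at $\rho \in \{0, 1\}$. Conjugating out the indicial behavior brings the ODE into a Heun-type form in which the symmetry eigenfunction admits a closed-form representation, and I would then combine hypergeometric expressions for the Frobenius solutions at the endpoints with connection-coefficient and quasi-solution (Birkhoff-type) estimates in the complex $\lambda$-plane to exclude zeros of the relevant connection Wronskian in $\{\Re\lambda \geq 0\} \setminus \{1\}$ and to verify simplicity at $\lambda = 1$. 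This extends \cite{Don14a, CosDonGloHua16}, which treated the base case $d = 5$, uniformly across all odd $d \geq 5$.

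Once mode stability is in hand, the Gearhart-Pr\"uss theorem yields the decay $\|S(\tau)(I - P)\|_{\mathcal{H} \to \mathcal{H}} \lesssim e^{-\omega \tau}$, where $P$ is the rank-one Riesz projection onto the unstable eigenspace. The nonlinearity $N(\Phi)$ is polynomial in the first component of $\Phi$, which lies in $H^{(d+1)/2}(\mathbb{B}^d)$, an algebra since $(d+1)/2 > d/2$; this furnishes local Lipschitz bounds for $N$ on small balls in $\mathcal{H}$. A Lyapunov-Perron fixed point argument on the Duhamel formulation then produces, for initial perturbations of size $\varepsilon / M$, a unique exponentially decaying solution $\Phi$, with the unstable direction absorbed by a suitable correction of the initial data; by time-translation symmetry this correction is realized as a shift in $T$ near $1$, yielding conclusion (1). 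Undoing the similarity rescaling transfers the decay of $\Phi$ into the claimed estimates \eqref{Eq:MainThmEst} and \eqref{Eq:MainThmEst_fullnorm}. The principal obstacle is clearly the spectral step: because $\alpha(d)$ and $\beta(d)$ depend non-trivially on $d$, treating all odd $d \geq 5$ uniformly (rather than case-by-case, as was done for $d = 5$) requires a careful global analysis of the Heun-type eigenvalue problem across the whole family of parameters.
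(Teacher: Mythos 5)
Your overall architecture coincides with the paper's: reduction to the radial profile equation \eqref{Eq:YM_equiv}, similarity variables, a first-order system on a Sobolev-type Hilbert space, mode stability proved uniformly in $d$ via a Heun-type recurrence with quasi-solution estimates, and a Lyapunov--Perron scheme in which the $\la=1$ instability is absorbed by adjusting $T$. However, there is one genuine gap at the decisive linear step. You claim that ``once mode stability is in hand, the Gearhart--Pr\"uss theorem yields'' the decay $\|\mb S(\tau)(1-\mb P)\|\lesssim e^{-\omega\tau}$. Gearhart--Pr\"uss requires, in addition to the absence of spectrum in $\{\Re\la\geq -\omega\}$, a \emph{uniform bound on the resolvent} $\mb R_{\mb L}(\la)$ on that half-plane as $|\Im\la|\to\infty$; mode stability only locates the spectrum and gives no such bound, and for this highly non-self-adjoint $\mb L$ the bound is not free. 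In earlier implementations (e.g.\ \cite{Don14a,CosDonGlo17}) this required separate, nontrivial resolvent estimates, and the paper explicitly avoids the issue by proving an abstract spectral mapping theorem for compactly perturbed semigroups (Theorem \ref{Thm:Semigroups}): since $\mb S(\tau)$ is a compact perturbation of $\mb S_0(\tau)$ (variation of constants), the part of $\sigma(\mb S(\tau))$ outside the disk of radius $e^{-\frac32\tau}$ consists of eigenvalues in exact correspondence with the unstable eigenvalues of $\mb L$, which yields Proposition \ref{Prop:Semigr_perturbed} iii) directly from Proposition \ref{Prop:unstable_spectrum} without any resolvent estimate. As written, your proposal either needs that abstract semigroup argument or a full resolvent analysis that you do not supply.

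Three further points deserve attention. First, the functional setting: the working space is $\mc H=H_r^{(d+1)/2}\times H_r^{(d-1)/2}(\B^{d+2})$, i.e.\ radial functions in $d+2$ dimensions, so $H^{(d+1)/2}$ is \emph{not} an algebra there ($(d+1)/2<(d+2)/2$); what is actually needed and proved (Lemma \ref{Lem:Nonlin_est}) is a trilinear estimate $H^{k_d}\times H^{k_d}\times H^{k_d}\to H^{k_d-1}(\B^{d+2})$, and moreover the passage between the 1-form norms on $\B^d_{T-t}$ appearing in \eqref{Eq:MainThmEst}--\eqref{Eq:MainThmEst_fullnorm} and the radial-profile norms on $\B^{d+2}$ is itself a nontrivial equivalence (Proposition \ref{Prop:Equiv_1forms}, proved via Hardy-type inequalities in Appendix \ref{App:SobolevCorot}) that your sketch takes for granted. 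Second, your connection-coefficient analysis at $\la=1$ would establish geometric simplicity, but the rank-one property of the Riesz projection $\mb P$ (no Jordan block), which your one-parameter Brouwer argument in $T$ needs, requires a separate argument; the paper does this in Proposition \ref{Prop:Semigr_perturbed} i) by a variation-of-constants computation tied to the supersymmetric equation \eqref{Eq:SUSY}, uniformly in $d$. Third, the asserted free growth bound ``strictly below $1/2$'' must in fact be strictly negative (the paper obtains $-3/2$ via a tailored inner product built from $D_{d+2}$ and Lumer--Phillips); otherwise the compact-perturbation argument does not yield exponential decay on the stable subspace.
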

\begin{remark}
	Note that simply by scaling we get 
	\begin{equation}\label{Eq:BlowupEst}
		\| \emph{A}_T[t] \|_{\dot{H}^k \times \dot{H}^{k-1}(\mathbb{B}^d_{T-t})} \simeq (T-t)^{\frac{d}{2}-1-k} \quad \text{as} \quad t \rightarrow T^-.
	\end{equation}
 Hence, blowup of $A_T[t]$ is detected by Sobolev norms of degree larger than $d/2-1$. The estimates \eqref{Eq:MainThmEst}-\eqref{Eq:MainThmEst_fullnorm} in particular imply that the solution $A[t]$ obeys the same leading asymptotic as $A_T[t]$, and although the difference $A[t]-A_T[t]$ might blowup as well, it does so at a slower rate. 
\end{remark}

\begin{remark}
	To avoid unnecessary technicalities, we restrict our stability analysis to $A_1$, as the general case follows by an analogous process, which furthermore does not bring any new insight.
\end{remark}

\begin{remark}
	For $d=5$, a version of Theorem \ref{Thm:Main} was established by Donninger in \cite{Don14a}. The differences are that his stability result concerns the radial profile $u_T$, the underlying topology is not given by Sobolev norms (although it is equivalent to the one we work in), and the result is conditional on a certain spectral assumption, which was, however, subsequently resolved in \cite{CosDonGloHua16}. Theorem \ref{Thm:Main} can be therefore interpreted as the extension of the works \cite{Don14a,CosDonGloHua16} to all higher odd dimensions. 
\end{remark}

\subsection{Equivalence of Cauchy problems}
 As it is apparent from the statement of Theorem \ref{Thm:Main}, we study the Cauchy  evolution of \eqref{Eq:YM_general}-\eqref{Eq:YM_gen_init_cond} inside lightcones with equivariant initial data prescribed on balls centered at the origin. In addition, since the flow preserves equivariance symmetry, the 1-form solution is determined by the evolution of its radial profile. We therefore proceed with stating an important, and for our approach relevant, relation between 1-forms and their radial profiles.
\begin{proposition}\label{Prop:Equiv_1forms}
	 Let $d \geq 2$, $k \in \mathbb{N}_0$ and $R>0$. If $k <  \frac{d}{2}+2$ then for an equivariant 1-form $A$ on $\B^d_R $ with the radial profile $u$, i.e., for
	 \[ 
	 {A}^{ij}_k(x)=(\delta^j_k x^i - \delta^i_k x^j)u(|x|), \quad x \in \B^d_R, \quad  i,j,k \in \{1,\dots,d\},
	 \]
	we have that ${A} \in H^k(\B_R^d)$ if and only if $u(|\cdot|) \in H^k(\B_R^{d+2})$. What is more,
	\begin{equation}\label{Eq:Equiv_1form}
		\| {A} \|_{H^k(\mathbb{B}^d_R)} \simeq \| u(|\cdot|) \|_{H^k(\mathbb{B}^{d+2}_R)}, 
	\end{equation}
	for all equivariant 1-forms ${A} \in H^k(\mathbb{B}^d_R)$, where the implicit constants in \eqref{Eq:Equiv_1form} can be chosen uniformly for all $R$ in a bounded set positively distanced from zero.
\end{proposition}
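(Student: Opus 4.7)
My plan is to reduce both $\|A\|_{H^k(\mathbb{B}^d_R)}^2$ and $\|u(|\cdot|)\|_{H^k(\mathbb{B}^{d+2}_R)}^2$ to weighted one-dimensional integrals in the radial variable, and then show directly that the two expressions coincide up to a multiplicative constant. The dimension shift from $d$ to $d+2$ should emerge naturally because the linear polynomial factor $\delta^j_k x^i - \delta^i_k x^j$ contributes, upon contraction over the indices $i,j,k$, two extra powers of $|x|$, which combine with the $d$-dimensional radial weight $r^{d-1}$ to produce $r^{d+1}$---precisely the volume weight of $\mathbb{B}^{d+2}_R$.

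I begin with the $L^2$ case ($k=0$). A short index computation gives $\sum_{i,j,k}(\delta^j_k x^i - \delta^i_k x^j)^2 = 2(d-1)|x|^2$, so $\|A\|_{L^2(\mathbb{B}^d_R)}^2 = 2(d-1)|S^{d-1}|\int_0^R u(r)^2 r^{d+1}\,dr$; passing to polar coordinates on $\mathbb{B}^{d+2}_R$ similarly yields $\|u(|\cdot|)\|_{L^2(\mathbb{B}^{d+2}_R)}^2 = |S^{d+1}|\int_0^R u(r)^2 r^{d+1}\,dr$, and the two agree up to a universal constant. For $k\ge 1$ I use the Leibniz rule: since $\delta^j_k x^i - \delta^i_k x^j$ is of degree one in $x$, only two groups of terms survive in the expansion of $\partial^\alpha A^{ij}_k$, namely
\[
\partial^\alpha A^{ij}_k = (\delta^j_k x^i - \delta^i_k x^j)\,\partial^\alpha[u(|\cdot|)] + \sum_n \alpha_n(\delta^j_k\delta^i_n - \delta^i_k\delta^j_n)\,\partial^{\alpha-e_n}[u(|\cdot|)].
\]
Squaring, summing over $(i,j,k)$, and invoking contraction identities analogous to the one used for $k=0$ (in particular $\sum_{i,j,k}(\delta^j_k\delta^i_n - \delta^i_k\delta^j_n)(\delta^j_k\delta^i_{n'} - \delta^i_k\delta^j_{n'}) = 2(d-1)\delta_{nn'}$), one expresses $\sum_{i,j,k}|\partial^\alpha A^{ij}_k|^2$ as a quadratic form in the Euclidean derivatives of $u(|\cdot|)$ on $\mathbb{R}^d$, with coefficients polynomial in $x$ of degree at most two.

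Both norms can then be expanded in polar coordinates via the radial chain rule $\partial_n[u(|x|)] = u'(|x|)x_n/|x|$ (and its higher iterates), producing finite sums of the form $\sum_{i,j}c_{ij}\int_0^R u^{(i)}(r)u^{(j)}(r) r^{\kappa_{ij}}\,dr$; matching the exponents $\kappa_{ij}$ across the two sides yields \eqref{Eq:Equiv_1form}. The main obstacle is the bookkeeping at this stage: the quadratic form contains cross terms $u^{(i)}u^{(j)}$ with $i\ne j$ whose matching across the two expansions may require integrations by parts on $[0,R]$, generating boundary contributions at $r = R$ that are then controlled by the trace theorem on $\mathbb{B}^{d+2}_R$ with constants uniform in $R$ once $R$ is bounded away from zero---exactly the source of the uniformity clause in the proposition. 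The restriction $k < d/2 + 2$ enters to ensure that $u$, together with its relevant derivatives, is regular enough at the origin for the polar decomposition to be valid and for $u(|\cdot|)$ to sit in the appropriate Sobolev class on $\mathbb{B}^{d+2}_R$ simultaneously with $A \in H^k(\mathbb{B}^d_R)$.
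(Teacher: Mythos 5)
Your reduction to a radial computation is the right skeleton (and is essentially how the paper proceeds, via the corotational map $U_i(x)=x_iu(|x|)$ and the identity $\|A\|^2_{H^k}=(d-1)\sum_i\|(\cdot)_iu(|\cdot|)\|^2_{H^k}$), but the step where you claim the proof closes --- ``matching the exponents $\kappa_{ij}$ across the two sides yields \eqref{Eq:Equiv_1form}'' --- is where the actual analytic content lives, and your proposal does not supply it. After expanding $\partial^\alpha[u(|x|)]$ by the chain rule one gets terms $u^{(j)}(|x|)|x|^{j-|\alpha|}$ with \emph{negative} powers of $|x|$, so both sides become weighted quadratic forms in $u,u',\dots,u^{(k)}$ with weights $r^{d+1+i+j-2n}$ of varying (and partly negative) exponents. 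These forms are not equal up to a constant, and they cannot be matched termwise: the equivalence is a genuine two-sided inequality that requires one-dimensional Hardy-type estimates of the form $\|(\cdot)^{\alpha}u\|_{L^2(0,1)}\lesssim |u(1)|+\|(\cdot)^{\alpha+1}u'\|_{L^2(0,1)}$ (valid only for $\alpha>-\tfrac12$), together with a trace-type bound for the boundary values $u^{(i)}(1)$, to trade the low-order, badly-weighted terms for top-order, well-weighted ones. This is exactly what the paper does through its Lemma on Hardy inequalities and the lemma equating $\|u(|\cdot|)\|_{H^k(\B^{d})}$ with $\sum_n\|(\cdot)^{(d-1)/2}u^{(n)}\|_{L^2(0,1)}$; moreover, in the lower bound one does not ``match'' anything but drops manifestly nonnegative lower-order and sphere-boundary terms produced by the divergence theorem, keeping only $\||\cdot|\,\partial^\alpha[u(|\cdot|)]\|_{L^2}$.

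Relatedly, your explanation of the hypothesis $k<\tfrac d2+2$ (``regularity at the origin for the polar decomposition to be valid'') is not correct: the polar decomposition is always valid for smooth profiles, and the restriction is forced precisely by the admissible range of weights in Hardy's inequality ($\alpha>-\tfrac12$), i.e.\ by the negative-power terms described above --- this is also why the paper's appendix remarks that the borderline case fails for its method. Since your scheme never identifies where $k<\tfrac d2+2$ is used, it cannot be complete as written. Two smaller omissions: the statement is for arbitrary equivariant $A\in H^k$, so after proving the estimate for smooth even profiles you still need the density of $C^\infty_e$ profiles (equivalently, radial smooth functions) in $H^k_r(\B^{d+2}_R)$ and the corresponding ``if and only if'' argument; and the uniformity in $R$ comes from scaling the equivalence proved on the unit ball, not from trace-theorem constants.
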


  To the knowledge of the author, equivalence results of this type have not appeared in the literature. For that reason, we devote  Appendix \ref{App:SobolevCorot} to the proof of Proposition \ref{Prop:Equiv_1forms} and furthermore, as a by-product we obtain analogous results for other geometrically relevant maps, e.g., the corotational ones.

 Now, due to equivalence \eqref{Eq:Equiv_1form},  we work exclusively on the Cauchy problem \eqref{Eq:YM_equiv}-\eqref{Eq:YM_equiv_init_cond}. Namely, the bulk of our work will consist of obtaining a stability result for $u_T$, which we then via \eqref{Eq:Equiv_1form} reformulate in terms of the 1-form it generates and thereby obtain Theorem \ref{Thm:Main}.

\subsection{History of the problem and related results and models}
The Yang-Mills equations came about as a model of particle physics, and are in fact the basic equations of
gauge theories describing the fundamental forces of nature, see \cite{Tho05,Ham17}. On the other hand, the classical field theory interpretation of the Yang-Mills model  has been of significant interest as well, see  \cite{Act79}.
Furthermore, understanding the problem of singularity formation for Yang-Mills equations is expected to shed light onto understanding the problem of collapse for more complicated models from general relativity, see \cite{BizTab01,GunMar07}.
Although the physical importance of understanding their solutions can not be overstated, the Yang-Mills equations are very much interesting from the pure mathematical point of view as well, and so far the bulk of the literature focuses on the subcritical and the critical case. 

For $d=3$, one of the earliest notable works on the problem is by Choquet-Bruhat and Christodoulou \cite{ChoChr81}, who proved a small data global existence result. This was followed by Eardley and Moncrief \cite{EarMon82b}, who showed existence of global classical solutions for smooth enough data of any size. Later on, Klainerman and Machedon \cite{KlaMac95} established well-posedness in the energy space, and then used conservation of energy to obtain global existence. This illustrates one of the dominant themes in the nonlinear wave equation theory, namely that of proving well-posedness in as weak spaces as possible, and much of the subsequent work on the problem is in that spirit. An alternative proof of well-posedness in the energy norm was more recently given by Oh \cite{Oh14}, which he then subsequently upgraded into a global existence result \cite{Oh15}. Well-posedness for small data below the energy norm was proved by Tao \cite{Tao03}, while the smallness assumption was removed recently by Oh and Tataru \cite{OhTat19}. Also, for some recent results that concern well-posedness in the Lorenz gauge, see \cite{SelAch16,Ach15a,Ach15b}.

For $d \geq 4$, the local well-posedness at an almost-optimal regularity in the Coulomb gauge was established by Klainerman and Tataru \cite{KlaTat99}. Recently, Oh and Tataru \cite{OhTat19} proved local well-posedness at the optimal regularity for the temporal gauge. Somewhat before that, Krieger and Tataru \cite{KriTat17} showed global existence for data with small energy.  As opposed to the case $d=3$, in $d=4$ there is a static equivariant solution called the \emph{instanton}. The energy of the instanton plays a role of the threshold for global existence; Cot\^e, Kenig and Merle \cite{CotKenMer08} proved global existence and scattering for equivariant initial data with
energy below or equal to the energy of the instanton. This line of research was recently significantly extended in a remarkable sequence of works by Oh and Tataru \cite{OhTat17arx,OhTat20,OhTat19,OhTat21,OhTat19b}, where they establish a threshold conjecture and the soliton-bubbling versus scattering dichotomy.

\subsubsection{Supercritical corotational wave maps}
The Yang-Mills system \eqref{Eq:YM_general} under equivariance symmetry (i.e., Eq.~\eqref{Eq:YM_equiv}) bears many similarities to corotational wave maps equation for maps between the Minkowski space $\R^{1+(d-2)}$ and rotationally symmetric warped product manifolds $\R^{+} \times_g \mathbb
S^{d-3}$,  
\begin{equation}\label{Eq:WM_equiv}
	\left( \partial_t^2 - \partial_r^2 - \frac{d-1}{r}\partial_r \right) u(t,r) =\frac{d-3}{r^3}\Big(ru(t,r)-g\big(ru(t,r)
	\big)g'\big(ru(t,r)\big)\Big),
\end{equation}
where $g$ is the warping function for the target manifold.
The parallel between Eq.~\eqref{Eq:WM_equiv} and Eq.~\eqref{Eq:YM_equiv} in the supercritical case $d \geq 5$ is very nicely illustrated in the context of local well-posedness and self-similar blowup in a paper by Cazenave et al.~\cite{CazShaTah98}, see also \cite{ShaTah94}. A particularly prominent example of Eq.~\eqref{Eq:WM_equiv} is the one for the  sphere as the target, i.e., for $g(u)=\sin(u)$. In that case, for $d=5$, both Eqs.~\eqref{Eq:WM_equiv} and \eqref{Eq:YM_equiv} admit infinitely many self-similar solutions, as proven by Bizo\'n \cite{Biz00,Biz02}, with the ``ground state" being conjectured to drive the generic blowup. More recently, this analogy has been extended to higher dimensions by Biernat and Bizo\'n \cite{BieBiz15} (see also \cite{BieBizMal17}), who also found the closed form expression of the ground state for both equations. The ground state for Eq.~\eqref{Eq:WM_equiv} is known to be stable in all odd dimensions, as proven in a pair of papers by Chatzikaleas, Costin, Donninger, and the author \cite{CosDonGlo17,ChaDonGlo17}, see also \cite{Don11,DonSchAic12} for the first (conditional) stability results.

\subsubsection{The Yang-Mills heat flow} It is instructive to finish the literature review with a discussion on the parabolic analogue of the Yang-Mills system \eqref{Eq:YM_general}. Namely, in order to construct Yang-Mills connections on the trivial bundle $\R^{d} \times SO(d,\R)$, one defines a flow by artificially adding the time derivative of the connection 1-form. Under equivariance symmetry, this reduces to a radial heat equation in $d+2$ dimensions 
\begin{equation}\label{Eq:YMHF_equiv}
	\displaystyle{\left( \partial_t - \partial_r^2 - \frac{d+1}{r}\partial_r \right) u(t,r) =(d-2)u(t,r)^2 \big(3-r^2u(t,r)\big)}.
\end{equation}
As opposed to its hyperbolic counterpart, the Yang Mills heat flow does not admit blowup in the critical dimension $d=4$, even without symmetry assumptions, as recently proven by Waldron \cite{Wal19}. In the supercritical case, however, Naito \cite{Nai94} showed that Eq.~\eqref{Eq:YMHF_equiv} admits blowup, see also \cite{Gro01}. Later on, Gastel \cite{Gas02} constructed self-similar (blowup) solutions for $5 \leq d \leq 9$, with an explicit example subsequently given by Weinkove \cite{Wei04}. Very recently, Weinkove's solution has been proved to be stable, in $d=5$ by Donninger and Sch\"orkhuber \cite{DonSch19}, and for higher dimensions by Sch\"orkhuber and the author \cite{GloSch20}. Interestingly, as opposed to Eq.~\eqref{Eq:YM_equiv}, for $d \geq 10$, possibility of self-similar blowup for Eq.~\eqref{Eq:YMHF_equiv} is ruled out \cite{BizWas15}, and generic blowup is therefore of non-self-similar type.

\subsection{Overview of the paper and sketch of the proof of Theorem \ref{Thm:Main}} 

After establishing the equivalence result Proposition \ref{Prop:Equiv_1forms}, the rest of the paper focuses exclusively on the Cauchy problem \eqref{Eq:YM_equiv}-\eqref{Eq:YM_equiv_init_cond}.

In Sec.~\ref{Sec:Well-posedness} we develop a well-posedness theory for \eqref{Eq:YM_equiv}-\eqref{Eq:YM_equiv_init_cond} inside lightcones. For this, we utilize the similarity variables 
\begin{equation*}\label{Def:SS_var_intro}
	\tau=\tau(t):=\ln T - \ln(T-t), \quad\rho=\rho(t,r):=\frac{r}{T-t}.
\end{equation*}
In this way, the backward lightcone of the point $(T,0)$ is mapped into the infinite cylinder centered at the origin and of unit radius.
By also rescaling the dependent variables
\begin{equation*}\label{Def:scaled_u_intro}
	\psi(\tau,\rho):=(T-t)^2u(t,r) \quad \text{and} \quad \varphi(\tau,\rho):=(T-t)^3\partial_tu(t,r),
\end{equation*}
the self-similar solution $u_T$ becomes $\tau$-independent $\Psi_{\text{st}}=\Psi_{\text{st}}(\rho)$. 
What is more, along with localizing the evolution to lightcones, in this way we turn the problem of stability of finite time blowup of $u_T$ inside the backward lightcone of the blowup point, into the (generally more familiar) problem of the asymptotic stability of a static solution $\Psi_{\text{st}}$ inside an infinite cylinder. But, to carry out the stability analysis, we first need a well-posedness theory for the Cauchy evolution in the new variables. To this end, we let $\Psi(\tau):=\big(\psi(\tau,\cdot),\varphi(\tau,\cdot) \big)$,
by means of which \eqref{Eq:YM_equiv}-\eqref{Eq:YM_equiv_init_cond} adopts the following form
\begin{equation}\label{Eq:YM_system_abstract_intro}
	\begin{cases}
		\Psi'(\tau)=\widetilde{\mb L}_0\Psi(\tau)+\mb N_0(\Psi(\tau))\\	
		\Psi(0)=\mb U_0(T).
	\end{cases}
\end{equation}
Here, $ \widetilde{\mb L}_0$ is the $(d+2)$-dimensional
radial wave operator in similarity variables, $\mb N_0$ is the remaining nonlinear operator, and the initial data is $\mb U_0(T) = \big( T^2 u_0(T\cdot) , T^3u_1(T\cdot) \big).$
We show local well-posedness of \eqref{Eq:YM_system_abstract_intro} in the following space of radial Sobolev functions
\begin{equation*}\label{Def:H_intro}
	\mathcal{H}:=H_r^{\frac{d+1}{2}} \times H_r^{\frac{d-1}{2}}(\mathbb{B}^{d+2}),
\end{equation*}
see Sec.~\ref{Sec:Funct_setup} for the precise definition. To achieve this, we use the semigroup theory. Namely, we first show that for every odd $d \geq 5$, (the closure of) the operator $\widetilde{\mb L}_0$ generates a one-parameter strongly continuous and exponentially decaying semigroup $\mb S_0(\tau)$ of bounded operators on $\mc H$, see Proposition \ref{Prop:free_semigroup}. The proof is somewhat involved and entails a carefully tailored inner product on $\mc H$, which allows for the efficient use of the reduction of the linear wave equation in odd dimensions to the one in one dimension (hence our focus on odd $d$). With this at hand, we  recast the problem \eqref{Eq:YM_system_abstract_intro} in the integral form \`a la Duhamel
\begin{equation*}\label{Eq:Duhamel0_intro}
	\Psi(\tau)=\mb S_0(\tau)\mb U_0(T) + \int_{0}^{\tau}\mb S_0(\tau-s)\mb N_0(\Psi(s))ds,
\end{equation*}
wherefrom by a fixed point argument we establish the existence and uniqueness of local strong solutions to \eqref{Eq:YM_system_abstract_intro}, for arbitrary initial data in $\mc H$. By undoing the similarity transformations, this then turns into local well-posedness of \eqref{Eq:YM_equiv}-\eqref{Eq:YM_equiv_init_cond} in lightcones, see Theorem \ref{Thm:Well-posedness}.

In Sec.~\ref{Sec:Stability_analysis} we carry out the stability analysis of $\Psi_{\text{st}}$.  For this, according to custom, we consider solutions to \eqref{Eq:YM_system_abstract_intro} of the following form $\Psi(\tau)=\Psi_{\text{st}}+\Phi(\tau)$.
 Such ansatz leads to the initial value problem for $\Phi$,
\begin{equation}\label{Eq:AbsEvolPhi}
	\begin{cases}
	\Phi'(\tau)=\widetilde{\mb L}_0\Phi(\tau)+\mb{L}'\Phi(\tau)+{\mb N}(\Phi(\tau))\\
	\Phi(0)= \mb U (\mb v,T),
	\end{cases}
\end{equation}
where $\mb L'$ is the Fr\'echet derivative of $\mb N_0$ at
$\Psi_{\text{st}}$ and $\mb N$ is the remaining nonlinear operator. Furthermore, for convenience, we write $\mb U (\mb v,T)= \mb U_0(T)- \mb U_0(1) + \mb v$, where $\mb v := \mb U_0(1) - \Psi_{\text{st}}$. The operator $\mb L': \mc H \rightarrow \mc H$ is compact, and therefore the closure of the operator $\widetilde{\mb L}_0 + \mb L'$, which we denote by $\mb L$, generates another semigroup $\mb S(\tau)$ on $\mc H$, by means of which we recast \eqref{Eq:AbsEvolPhi} in the integral form
\begin{equation}\label{Eq:Duhamel1}
	\Phi(\tau)=\mathbf{S}(\tau)\mathbf{U}(\mathbf{v},T)+\int_{0}^{\tau}\mathbf{S}(\tau-s)\mathbf{N}(\Phi(s))ds.
\end{equation}
Now, asymptotic stability of $\Psi_{\text{st}}$ is equivalent to small data global existence and decay of solutions to Eq.~\eqref{Eq:Duhamel1}. Proving this, however, necessitates some sort of decay of $\mb S(\tau)$. It turns out that, to understand the growth properties of $\mb S(\tau)$, it is enough to study the spectrum of its generator $\mb L$. This is possible because of a spectral mapping property that underlies this setting. In fact, this follows from a general theorem on compactly perturbed semigroups we prove in the appendix, see Theorem \ref{Thm:Semigroups}.  Consequently, the core of this section, and of the proof of the main result overall, is the spectral analysis of $\mb L$. We note, however, that due to the highly non-self-adjoint character of the operator $\mb L$, studying its spectrum is an extremely difficult problem. In spite of this, we managed to prove that $\mb L$ has precisely one unstable eigenvalue $\la=1$, see Proposition \ref{Prop:unstable_spectrum}. This property of $\mb L$ is equivalent to the so called \emph{mode stability} of $u_T$, and for the lowest dimension $d=5$ this has been an open problem for quite a while (see  \cite{BizChm05} for the initial appearance), until it was somewhat recently resolved in a paper by Costin, Donninger, Huang and the author \cite{CosDonGloHua16}. The techniques from \cite{CosDonGloHua16} were subsequently adapted in \cite{CosDonGlo17} to all higher dimensions in the context of wave maps, and we follow this methodology here. What is more, we significantly simplify the method from \cite{CosDonGlo17}, and we make it more transparent and robust, see Remark \ref{Rem:Quasi}.

 The unstable eigenvalue $\la=1$ is, in fact, not a ``real" instability, as it is related to the time translation symmetry of the equation. To deal with this instability, we use a Lyapunov-Perron type of argument. First, we define the Riesz projection $\mb P$ associated with $\la=1$. This projection turns out to have rank one, and it furthermore yields exponential decay on the stable subspace
\begin{equation}\label{Eq:Decay}
	\|\mb S(\tau)(1 - \mb P) \|_{\mc H} \leq M e^{-\omega \tau}, \quad \omega >0.
\end{equation}
 We emphasize here that, according to our theorem on compactly perturbed semigroups (Theorem \ref{Thm:Semigroups}), the information on the unstable point spectrum is enough to conclude the spectral gap property of $\mb L$, which then implies the exponential decay of $\mb S(\tau)$ on the stable subspace. This, most importantly, avoids having to estimate the resolvent of $\mb L$ (in order to invoke Gearhart-Pr\"uss), which was done in all of the implementations of this approach so far (in case the entire point spectrum could not be computed explicitly). 
Then, we suppress the symmetry-induced unstable direction of $\mb S(\tau)$
by introducing a correction term
\[ \mb C(\Phi,\mb U(\mb v,T)):=\mb P\left(\mb U(\mb
v,T)+\int_{0}^{\infty}e^{-s}\mb N(\Phi(s))ds \right) \]
into Eq.~\eqref{Eq:Duhamel1}, i.e., we consider the
modified equation
\begin{align}\label{Eq:Modified1}
	\Phi(\tau)=\mb S(\tau)\big(\mb U(\mb v,T)-\mb C(\Phi,\mb U(\mb v,T))\big)  + \int_0^{\tau} \mb S(\tau - s)  \mb N(\Phi(s)) ds.
\end{align}
We then prove by a fixed point argument that for small enough initial
data $\mb v$, every $T$ close to $1$ yields a unique solution to
Eq.~\eqref{Eq:Modified1} that decays to zero at the
rate given in \eqref{Eq:Decay}. 
Finally,  we use the presence of the time-translation-symmetry eigenvalue
$\la=1$ to single out a particular $T$ near $1$ for
which $\mb C(\Phi,\mb U(\mb v,T))=0$, and thereby obtain global decaying
solutions to Eq.~\eqref{Eq:Duhamel1}. This, when translated back to physical coordinates, yields stability of $u_T$. Finally, by means of Proposition \ref{Prop:Equiv_1forms}, from this we obtain Theorem \ref{Thm:Main}.

\subsection{Notation and conventions}

$\mathbb{B}^{d}_R$ denotes the $d$-dimensional ball in $\R^d$, centered at the origin, with radius $R$. For the unit ball we simply write $\mathbb{B}^{d}$. By $\Hb$ we denote the closed complex right half-plane. By $\mc S(\R^d)$ we denote the Schwartz space. 
For a domain $\Omega \in \R^d$ and a non-negative integer $k$ we denote by $\dot{H}^k(\Omega)$ (resp.~$H^k(\Omega)$) the standard homogeneous (resp.~inhomogeneous) Sobolev space with 
\begin{equation*}
	\| u \|^2_{\dot{H}^k(\Omega)} := \sum_{|\alpha|=k}\| \partial^\alpha u \|^2_{L^2(\Omega)}, \quad \text{and} \quad \| u \|^2_{H^k(\Omega)} := \sum_{n=0}^{k}\| u \|^2_{\dot{H}^n(\Omega)}.
\end{equation*}
On a Banach space $X$, by $\mc B(X)$ we denote the set of bounded linear operators. For a closed linear operator $(L,\mc D(L))$ on $X$  we denote by $\rho( L)$ the resolvent set of $L$, and $\sigma( L):= \mathbb{C} \setminus \rho( L)$ stands for the spectrum of $ L$, while $\sigma_p( L)$ denotes the point spectrum. Also, when $\la \in \rho( L)$ we use the following convention for the resolvent operator, $ R_{ L}(\la):=(\la -  L)^{-1}$. By $\ker L$ and $\rg L$ we denote respectively the kernel and the range of $L$. If $ L \in \mc B(X)$ we denote by $r(L)$ the spectral radius of $L$. We also use the usual asymptotic notation $a \lesssim b$ to denote $a \leq Cb$ for some implicit constant $C>0$. Also, we write $a \simeq b$ if $a \lesssim b$ and $b \lesssim a.$ For the Wronskian of two functions $f,g \in C^1(I), I \subseteq \R$, we use the following convention $W(f,g):=fg'-f'g$.

\section{Similarity variables and well-posedness in lightcones}\label{Sec:Well-posedness}

\noindent Due to the finite speed of propagation, evolution of \eqref{Eq:YM_equiv}-\eqref{Eq:YM_equiv_init_cond} inside a (backward) lightcone represents an independent dynamical system, and since we study stability of blowup at the origin, our aim is to localize the evolution to backward lightcones of points $(T,0)$, for $T>0$.
This can in fact be efficiently done by the so-called \emph{similarity variables}.

\subsection{Similarity variables}\label{Sec:SimVar}

\noindent  We define
\begin{equation}\label{Def:SS_var}
	\tau=\tau(t):=\ln T - \ln(T-t), \quad\rho=\rho(t,r):=\frac{r}{T-t}.
\end{equation}
Also, we rescale the dependent variables in the following way
\begin{equation}\label{Def:scaled_u}
	\psi(\tau,\rho):=(T-t)^2u(t,r) \quad \text{and} \quad \varphi(\tau,\rho):=(T-t)^3\partial_tu(t,r),
\end{equation}
where (based on \eqref{Def:SS_var})
\begin{equation*}
	t=T(1-e^{-\tau}) \quad \text{and} \quad r=T\rho e^{-\tau}.
\end{equation*}  
Now, note that for $0 < S < T$ the coordinate transformation \eqref{Def:SS_var} maps the truncated lightcone
$$	\Gamma_{T,S}:=\{ (t,r): t\in [0,S], 0 \leq r \leq T-t \}
$$
onto the cylinder
$$C_{\tau(S)}:=\{ (\tau,\rho) : \tau \in [0,\tau(S)],  0 \leq \rho \leq 1\},$$
and consequently the evolution of $(u,\partial_tu)$ inside $\Gamma_{T,S}$ in $t$ corresponds to the evolution of $(\psi,\varphi)$ inside $C_{\tau(S)}$ in $\tau$.
 We now derive the evolution equation for $(\psi,\varphi)$. First, note that Eq.~\eqref{Eq:YM_equiv} via \eqref{Def:SS_var} turns into
\begin{multline*}
	\big(\partial^2_\tau + 5 \partial_\tau + 2 \rho \partial_\tau\partial_\rho \big)\psi(\tau,\rho)=\\ \left((1-\rho^2)\partial^2_\rho + \tfrac{d+1}{\rho}\partial_\rho -6\rho \partial_\rho  - 6\right)\psi(\tau,\rho) + (d-2)\psi(\tau,\rho)^2\big(3-\rho^2 \psi(\tau,\rho)\big).
\end{multline*}
Furthermore, since $\varphi(\tau,\rho)=(\partial_\tau + \rho \partial_\rho + 2)\psi(\tau,\rho)$ 
we obtain the following evolution system
\begin{align}\label{Eq:MatrixEq}
\begin{pmatrix} 
\partial_{\tau}\psi \vspace{2mm} \\
 \partial_{\tau}\varphi
\end{pmatrix}
=
\begin{pmatrix}
\varphi-\rho\partial_\rho\psi - 2\psi \vspace{2mm}\\
\triangle^{d+2}_\rho \psi -\rho\partial_{\rho}\varphi - 3\varphi 
\end{pmatrix}
+
\begin{pmatrix} 0 \vspace{2mm}\\
(d-2)\psi^2(3-\rho^2\psi)
\end{pmatrix},
\end{align}
where we dropped the dependence on independent variables so as to avoid notational clutter, and
where $\triangle^{d+2}_\rho$ denotes the $(d+2)$-dimensional radial Laplace operator
\begin{equation*}
	\triangle^{d+2}_\rho:=\partial^2_{\rho}+\tfrac{d+1}{\rho}\partial_{\rho}.
\end{equation*}
Furthermore, there is the following initial condition
\begin{equation}\label{Eq:YM_init_cond_tau}
	\psi(0,\cdot)=T^2 u_0(T\cdot), \quad \varphi(0,\cdot)=T^3 u_1(T\cdot).
\end{equation}

 The next task is to define the notion of solution to the system \eqref{Eq:MatrixEq}. This then, via \eqref{Def:SS_var}-\eqref{Def:scaled_u}, yields a lightcone solution to Eq.~\eqref{Eq:YM_equiv}, and this in turn leads via \eqref{Def:Equiv_ansatz} to an equivariant lightcone solution of the system \eqref{Eq:YM_general}. In the following section we introduce the function spaces we use in the analysis of Eq.~\eqref{Eq:MatrixEq}.

\subsection{Functional set-up}\label{Sec:Funct_setup} Throughout the rest of the paper, unless we explicitly say otherwise, the dimension $d$ is assumed to be odd. We start with defining the radial Sobolev spaces on balls. Namely, for $k \in \mathbb{N}_0$ and $R>0$ we let
\begin{equation}\label{Def:Rad_Sobolev}
	H^k_r(\B^d_R):= \{ u:(0,R)\rightarrow \mathbb{C}: u(|\cdot|) \in H^k(\B^d_R) \}.
\end{equation}
This space has a Hilbert space structure which is inherited from $H^k(\B^d_R)$. Also, note that we identified a radial Sobolev function with its radial profile. For simplicity, we also introduce the following parameter
\begin{equation*}
	k_d:=\tfrac{d+1}{2}.
\end{equation*}
With these preparations we define the central space for our analysis
\begin{equation}\label{Def:H}
	\mathcal{H}:=H_r^{k_d} \times H_r^{k_d-1}(\mathbb{B}^{d+2}).
\end{equation}
This is a Hilbert space with the inner product
\begin{equation*}
	(\mb u | \mb v)_{\mc H} := \big(u_1(|\cdot|)\, |\, v_1(|\cdot|) \big)_{H^{k_d}(\mathbb{B}^{d+2})} + \big(u_2(|\cdot|)\, |\, v_2(|\cdot|) \big)_{H^{k_d-1}(\mathbb{B}^{d+2})},
\end{equation*}
and the corresponding norm
\begin{equation*}
	\| \mb u \|_\mc{H}^2 := (\mb u | \mb u)_{\mc H} = \| u_1(|\cdot|) \|^2_{H^{k_d}(\mathbb{B}^{d+2})} + \| u_2(|\cdot|) \|^2_{H^{k_d-1}(\mathbb{B}^{d+2})},
\end{equation*}
for $\mb u = (u_1,u_2)$ and $\mb v = (v_1,v_2)$. 
Also for later convenience we define
\begin{equation*}\label{Def:H_R}
	\mathcal{H}_R:=H_r^{k_d} \times H_r^{k_d-1}(\mathbb{B}^{d+2}_R),
\end{equation*}
with an analogous inner product and the norm it induces.
There are several more spaces necessary for our subsequent work. First, there is the space of smooth radial functions on balls
\begin{equation}\label{Def:C_r_inf}
	C_r^\infty(\overline{\B^d_R}):= \{ u \in C^\infty(\overline{\B^d_R}) : u\text{ is radial} \}.
\end{equation}
Also, we define the space of ``even" smooth functions
\begin{equation}\label{Def:C_e_inf}
	C_e^\infty [0,R]:= \{ u \in C^\infty [0,R] : u^{(2k+1)}(0)=0 \text{ for }  k \in \mathbb{N}_0 \}.
\end{equation}
In the sequel we need the following important correspondence between the spaces \eqref{Def:C_r_inf} and \eqref{Def:C_e_inf}.
\begin{lemma}\label{Lem:C_even}
	Let $u: \overline{\B^d_R} \rightarrow \mathbb{C}$ be radial with the radial profile $\hat{u}$, i.e., $u=\hat{u}(|\cdot|)$. Then $u \in C_r^\infty(\overline{\B^d_R})$ if and only if $\hat{u} \in C^\infty_e[0,R]$.
\end{lemma}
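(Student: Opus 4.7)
The plan is to prove the equivalence by handling each direction separately, reducing to the classical correspondence between smooth radial functions on $\R^d$ and smooth even functions of a single variable.

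For the forward implication $u \in C^\infty_r(\overline{\B^d_R}) \Rightarrow \hat u \in C^\infty_e[0,R]$, I would restrict $u$ to a single coordinate axis. Set $v(t) := u(t e_1)$ for $t \in [-R,R]$, where $e_1$ is the first standard basis vector. Being a composition of smooth maps, $v$ is smooth on $[-R,R]$; and since $u$ is radial, $v(t) = \hat u(|t|)$, so $v$ is an even function of $t$. Consequently all odd-order derivatives of $v$ at $0$ vanish. Noting that $\hat u(t) = v(t)$ for $t \in [0,R]$, we obtain $\hat u \in C^\infty[0,R]$ with $\hat u^{(2k+1)}(0) = 0$ for every $k \in \mathbb{N}_0$, i.e., $\hat u \in C^\infty_e[0,R]$.

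For the reverse implication, the strategy is to represent $\hat u$ as a smooth function of $r^2$. Given $\hat u \in C^\infty_e[0,R]$, define the even extension $\tilde v(t) := \hat u(|t|)$ for $t \in [-R,R]$; the vanishing of the odd derivatives of $\hat u$ at the origin guarantees that all one-sided derivatives of $\tilde v$ match at $0$, so $\tilde v \in C^\infty[-R,R]$. The key tool at this point is Whitney's classical theorem on smooth even functions, which asserts that every such $\tilde v$ admits a representation $\tilde v(t) = F(t^2)$ for some $F \in C^\infty[0,R^2]$. Then $u(x) = \hat u(|x|) = F(x_1^2 + \cdots + x_d^2)$ on $\overline{\B^d_R}$, which is smooth as a composition of smooth maps.

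The main technical obstacle is the invocation of Whitney's theorem; although classical, producing the smooth function $F$ is non-trivial and rests on a careful Taylor-remainder argument. Beyond that, everything else is routine: the forward direction only uses that restriction to a linear slice preserves smoothness, and the reverse direction is purely a composition argument once $F$ is in hand. Smoothness up to the boundary sphere poses no additional difficulty since the representation $u(x) = F(|x|^2)$ is manifestly smooth at every point of $\overline{\B^d_R}$.
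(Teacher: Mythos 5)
Your proof is correct and follows essentially the same route as the paper: the forward direction by restricting $u$ to a coordinate axis and using evenness of the resulting one-variable function, and the reverse direction by writing the profile as a smooth function of $|x|^2$. The only difference is that you obtain the representation $\hat{u}(r)=F(r^2)$ by citing Whitney's theorem on even functions, whereas the paper derives it directly by repeated application of the fundamental theorem of calculus (a Taylor-remainder argument), which amounts to the same key fact.
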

\begin{proof}
	Assume that $\hat{u} \in C^\infty_e[0,R]$. By repeated application of the fundamental theorem of calculus we infer the existence of $\tilde{u} \in C^\infty[0,R^2]$ for which $u=\tilde{u}(|\cdot|^2)$. Then smoothness of $u$ follows from smoothness of both $\tilde{u}$ and $x \mapsto |x|^2$.
	Now, for the reverse implication suppose $u \in C^{\infty}(\overline{\B^d_R})$. Define $v:[-R,R] \rightarrow \mathbb{C}$ by $v(s):=u(s,0,\dots,0)$. Then $v$ is even and smooth. From this it follows that $v^{(2k+1)}(0)=0$ for $k \in \mathbb{N}_0$, and since $\hat{u}=v|_{[0,R]}$ then $\hat{u} \in C^\infty_e[0,R]$.
\end{proof}
\noindent If $u \in H_r^k(\B^d_R)$ then by (radial) Sobolev extension of $u(|\cdot|)$ to $\R^d$ and convolution with a smooth radial mollifier we obtain a sequence of radial smooth functions which converges to $u(|\cdot|)$ in $H^k(\B^d_R)$. This, according to Lemma \ref{Lem:C_even}, implies that $C^\infty_e[0,R]$ is dense in $H^k_r(\B^d_R)$, and consequently, $C^\infty_e[0,1]^2$ is dense in $\mc H$. We will also need the space of ``odd" smooth functions
\begin{equation*}
	C_o^\infty [0,R]:= \{ u \in C^\infty [0,R] : u^{(2k)}(0)=0 \text{ for }  k \in \mathbb{N}_0 \}.
\end{equation*}

 \subsection{Well-posedness in lightcones}\label{Subsec:Well-posedness}
In this section we establish well-posedness of the system \eqref{Eq:MatrixEq} in $\mc H$. To achieve this, we use the semigroup theory, and we therefore proceed with writing the system \eqref{Eq:MatrixEq}-\eqref{Eq:YM_init_cond_tau} in an abstract ODE form. Namely, by letting
 \begin{equation}\label{Def:YM_vector}
 	\Psi(\tau):=
 	\begin{pmatrix}
 		\psi(\tau,\cdot)  \\
 		\varphi(\tau,\cdot)
 	\end{pmatrix} 
 \end{equation}
 we obtain the following evolution equation
 \begin{equation}\label{Eq:YM_system_abstract}
 	\begin{cases}
 		\Psi'(\tau)=\widetilde{\mb L}_0\Psi(\tau)+\mb N_0(\Psi(\tau))\\	
 		\Psi(0)=\mb U_0(T),
 	\end{cases}
 \end{equation}
 where for a 2-component function\footnote{When writing column vectors inline we put them in the row form.}  $\mb u(\rho)=(u_1(\rho),u_2(\rho))$ we have
 \begin{equation}
 	\widetilde{\bf{L}}_0\mb u(\rho):=
 	\begin{pmatrix}
 		u_2(\rho)-\rho u_1'(\rho)-2u_1(\rho) \vspace{1.5mm}\\
 		u_1''(\rho)+\tfrac{d+1}{\rho}u_1'(\rho)-\rho u_2'(\rho)-3u_2(\rho)
 	\end{pmatrix},\label{Def:L0}
 \end{equation}
 \begin{equation}\label{Def:N(u)}
 	{\bf{N}}_0(\mb u)(\rho):=
 	\begin{pmatrix}
 		0 \vspace{1mm}\\
 		(d-2)u_1(\rho)^2\big(3-\rho^2 u_1(\rho)\big)
 	\end{pmatrix},
 \end{equation}
 and the initial data is 
 \begin{equation}\label{Def:Initial_data_operator}
 	\mb U_0(T) := 
 	\begin{pmatrix}
 		T^2 u_0(T\cdot)\\
 		T^3u_1(T\cdot)
 	\end{pmatrix}.
 \end{equation}
 
 Following the standard approach, we first study the linear version of Eq.~\eqref{Eq:YM_system_abstract}, and prove the existence of linear flow in $\mc H$. More precisely, we show that the (closure of the) operator $\widetilde{\mb L}_0$ generates a strongly continuous, one-parameter semigroup of bounded operators on $\mc H$. For this, we first need to supply $\widetilde{\mb L}_0$ with a domain which is dense in $\mc H$. In fact, we simply let
\begin{equation*}
	\mathcal{D} (\widetilde{\bf{L}}_0):=  C^\infty_e[0,1]^2.
\end{equation*}
With these preparations at hand, we formulate the anticipated semigroup existence result.
\begin{proposition}\label{Prop:free_semigroup}
	The operator $\widetilde{\bf{L}}_0: \mathcal{D} (\widetilde{\bf{L}}_0) \subseteq \mathcal{H} \rightarrow \mathcal{H}$ is closable, and its closure ${\bf L}_0$  generates a strongly continuous one-parameter semigroup $(\textbf{\emph{S}}_0(\tau))_{\tau \geq 0}$ of bounded operators on $\mathcal{H}$. Furthermore, there exists $M > 0$ such that
	\begin{equation}\label{Eq:Growth_S_0}
	\| \textbf{\emph{S}}_0(\tau) \mb{\emph{u}}\|_{\mathcal{H}} \leq M e^{-\frac{3}{2}\tau}\| \mb{\emph{u}} \|_{\mc H},
	\end{equation}
	for all $\tau \geq 0$ and all $\mb{\emph{u}} \in \mc H$.
\end{proposition}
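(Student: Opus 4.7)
The plan is to apply the Lumer--Phillips theorem to the shifted operator $\widetilde{\mb L}_0 + \tfrac{3}{2}$ on $\mc H$, working in an equivalent inner product tailored to the odd-dimensional reduction of the radial wave equation. Since $d+2$ is odd, there is an explicit intertwining operator $\mathcal T$ of order $k_d - 1$, built from $\rho$-powers and the operator $\rho^{-1}\partial_\rho$, which conjugates $\triangle^{d+2}_\rho$ to $\partial_\rho^2$ on $C_e^\infty[0,1]$, i.e.\ $\mathcal T \triangle^{d+2}_\rho u = \partial_\rho^2(\mathcal T u)$. This reduction is the only place where the oddness of $d$ is used, and it lets me trade the $(d+2)$-dimensional radial calculus for a one-dimensional calculation on $[0,1]$.

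Using $\mathcal T$, I would introduce an inner product $(\cdot|\cdot)_*$ on $C_e^\infty[0,1]^2$ whose top-order piece is $\int_0^1 (\mathcal T u_1)'\,\overline{(\mathcal T v_1)'}\, d\rho + \int_0^1 \mathcal T u_2 \,\overline{\mathcal T v_2}\, d\rho$, supplemented by lower-order corrections to enforce positivity. Equivalence with $\|\cdot\|_\mc{H}^2$ follows from Hardy-type inequalities and density of $C_e^\infty[0,1]^2$ in $\mc H$ (Lemma \ref{Lem:C_even} and the remarks thereafter). With this inner product, $\Re((\widetilde{\mb L}_0\mb u\,|\,\mb u)_*)$ reduces, after integration by parts on $[0,1]$, to a one-dimensional energy computation: the transport $-\rho\partial_\rho$ against the flat weight $d\rho$ produces an interior term $+\tfrac{1}{2}\|\cdot\|^2$ and a non-positive boundary term at $\rho=1$, so combined with the zeroth-order coefficients $-2u_1$ and $-3u_2$ one obtains effective decay rates $\tfrac{1}{2}-2=-\tfrac{3}{2}$ on $\mathcal T u_1$ and $\tfrac{1}{2}-3=-\tfrac{5}{2}$ on $\mathcal T u_2$; the cross-coupling between $u_2$ in the first row and $(\mathcal T u_1)''$ in the second row cancels through the standard one-dimensional wave-energy identity. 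The conclusion $\Re((\widetilde{\mb L}_0\mb u\,|\,\mb u)_*) \leq -\tfrac{3}{2}\|\mb u\|_*^2$ gives dissipativity of $\widetilde{\mb L}_0 + \tfrac{3}{2}$, and together with density of the domain yields closability by the standard Hilbert-space argument.

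For the range condition, I would show that, for some $\la$ with $\Re\la$ sufficiently large, every $\mb f \in C_e^\infty[0,1]^2$ lies in the range of $\la - \widetilde{\mb L}_0$. Eliminating $u_2 = (\la+2)u_1 + \rho u_1' - f_1$ reduces the resolvent equation to a second-order linear ODE for $u_1$ on $[0,1]$ with regular singular points at $\rho=0$ and $\rho=1$. The indicial analysis at $\rho=0$ selects an even Frobenius solution (lying in $C_e^\infty$ near the origin), and at the self-similar horizon $\rho=1$ one indicial exponent is $0$, producing a smooth solution there; for large enough $\Re\la$ the Wronskian of the two analytic solutions is nonzero, so variation of parameters delivers $u_1 \in C_e^\infty[0,1]$, whence $\mb u \in \mathcal D(\widetilde{\mb L}_0)$. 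Lumer--Phillips then produces the semigroup $\mb S_0(\tau)$ satisfying the bound \eqref{Eq:Growth_S_0}. The main obstacle is the design of $(\cdot|\cdot)_*$: a naive choice of weights spoils either positivity or the sign of the top-order boundary term at $\rho=1$, and only the exact intertwining $\mathcal T\triangle^{d+2}_\rho = \partial_\rho^2\mathcal T$ (available precisely in odd dimensions) makes it possible to collapse the computation to a one-dimensional problem and extract the sharp decay constant $\tfrac{3}{2}$.
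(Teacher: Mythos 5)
Your strategy is essentially the paper's: your intertwining operator $\mathcal T$ is the paper's $D_{d+2}$ (Lemma \ref{Lem:Comm_rel}), the top-order part of your inner product is exactly $(\cdot|\cdot)_D$ from \eqref{Def:Inner_product_D}, and the proof runs through dissipativity with constant $-\tfrac{3}{2}$, a range condition, and Lumer--Phillips. Two points in your dissipativity bookkeeping do not survive a term-by-term check, even though the final inequality is correct. First, conjugation by $\mathcal T$ does not leave the zeroth-order coefficients intact: by $D_d\Lambda u=\Lambda D_d u+D_d u$ each occurrence of $\rho\partial_\rho$ produces a shift by $+1$, so in the conjugated system \eqref{Eq:Sys_A0} the coefficients are $-1$ and $-2$, not $-2$ and $-3$. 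Second, in the $\dot H^1(0,1)$ pairing the transport term contributes $-\tfrac{1}{2}\|w_1'\|_{L^2(0,1)}^2$ (plus a favorable boundary term), not $+\tfrac{1}{2}$; only in the $L^2$ pairing does it give $+\tfrac{1}{2}$. With the correct numbers both components come out at exactly $-\tfrac{3}{2}$, as in Lemma \ref{Lem:Dissipation}; your stated rates $-\tfrac{3}{2}$ and $-\tfrac{5}{2}$ come from two compensating slips. Relatedly, you should drop the ``lower-order corrections to enforce positivity'': the content of Proposition \ref{Prop:Equivalence_D} is precisely that the pure $D$-seminorm is already equivalent to $\|\cdot\|_{\mc H}$ on $C^\infty_e[0,1]^2$ (via the Hardy inequalities of Lemma \ref{Lem:Hardy_on_[0,1]}), and generic lower-order additions would introduce non-cancelling cross terms and threaten the sharp constant $-\tfrac{3}{2}$ that the whole construction is designed to produce.

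On the range condition you take a genuinely different, though standard, route: solving $(\la-\widetilde{\mb L}_0)\mb u=\mb f$ for one large real $\la$ by Frobenius analysis at both endpoints and variation of parameters. This can be made to work, but as written it omits the steps that carry the weight: why the analytic-at-$0$ and analytic-at-$1$ branches are independent (one clean argument: if they were proportional, you would obtain a nonzero element of $C_e^\infty[0,1]^2$ annihilated by $\la-\widetilde{\mb L}_0$ with $\la>-\tfrac{3}{2}$, contradicting the dissipativity estimate on the domain), why the particular solution is smooth at \emph{both} endpoints and even at $\rho=0$, and why the resulting pair $(u_1,u_2)$ really lies in $\mc D(\widetilde{\mb L}_0)=C_e^\infty[0,1]^2$ (if you work on the conjugated equation you also need the inversion $K_{d+2}$ of Lemma \ref{Lem:D_d_K_d} to return to the original variables). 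The paper instead proves density of $\rg\widetilde{\mb L}_0$ itself --- which is the range condition for the shifted dissipative operator $\widetilde{\mb L}_0+\tfrac{3}{2}$ with $\la_0=\tfrac{3}{2}$ --- and does so after conjugation, where the reduced ODE \eqref{Eq:Du_1} has the explicit fundamental system $\rho(1-\rho^2)^{-1}$, $(1-\rho^2)^{-1}$, making the endpoint regularity and parity checks immediate (Lemma \ref{Lem:Range_dense}). Your large-$\la$ variant is more flexible but obliges you to supply those verifications explicitly.
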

\noindent So as not to unnecessarily distract the reader right now, we defer the proof of this result to the next section, and we proceed now with developing a well-posedness theory for the full nonlinear system~\eqref{Eq:YM_system_abstract}. With Proposition \ref{Prop:free_semigroup} at hand, we reformulate Eq.~\eqref{Eq:YM_system_abstract} in the integral form \`a la Duhamel
\begin{equation}\label{Eq:Duhamel0}
	\Psi(\tau)=\mb S_0(\tau)\mb U_0(T) + \int_{0}^{\tau}\mb S_0(\tau-s)\mb N_0(\Psi(s))ds,
\end{equation}
by means of which we define local (strong) solutions in lightcones.
\begin{definition}\label{Def:Strong_sol}
	 Let $T,S>0$ with $T>S$. We say that $u: \Gamma_{T,S} \rightarrow \R$ is a \emph{(strong) solution} to the Cauchy problem \eqref{Eq:YM_equiv}-\eqref{Eq:YM_equiv_init_cond} if the corresponding $\Psi : [0,\tau(S)] \rightarrow \mathcal{H}$  belongs to $C([0,\tau(S)],\mathcal{H})$ and satisfies Eq.~\eqref{Eq:Duhamel0} for all $\tau \in [0,\tau(S)]$.
\end{definition}  
 Since the linear map $(u_0,u_1) \mapsto   \mb U_0(T):\mc H_T \rightarrow \mc H$ is bounded, and the  nonlinearity $\mb N_0$ is locally Lipschitz continuous in $\mc H$ (see Sec.~\ref{Sec:Nonlin_est}), a standard fixed point argument applied to Eq.~\eqref{Eq:Duhamel0} yields the following local well-posedness result.
\begin{theorem} \label{Thm:Well-posedness}
Let $T,R>0$, and let
\[
\mc B_{T,R}:=\{ {\bf u} \in \mc H_T : \|{\bf u}\|_{\mc H_T} < R \}.
\]
Then there exists $\tilde{T}=\tilde{T}(T,R)>0$ such that for every $(u_0,u_1) \in \mc B_{T,R}$ there is a unique strong solution to \eqref{Eq:YM_equiv}-\eqref{Eq:YM_equiv_init_cond} on $\Gamma_{T,\tilde{T}}$. Furthermore, the corresponding data-to-solution map 
\[
 (u_0,u_1) \mapsto \Psi : \mc B_{T,R} \rightarrow C([0,\tau(\tilde{T})],\mathcal{H})
 \]
 is Lipschitz continuous.
\end{theorem}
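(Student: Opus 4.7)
The plan is to solve the Duhamel equation \eqref{Eq:Duhamel0} by a Banach fixed-point argument on a small closed ball in $C([0,\tau_0],\mc H)$, where $\tau_0:=\tau(\tilde T)$ will be chosen small depending on $T$ and $R$. Concretely, I would study the map
\[
\mc K(\Psi)(\tau):=\mb S_0(\tau)\mb U_0(T)+\int_0^\tau \mb S_0(\tau-s)\mb N_0(\Psi(s))\,ds
\]
acting on the complete metric space $X_\delta:=\{\Psi\in C([0,\tau_0],\mc H):\sup_{\tau\in[0,\tau_0]}\|\Psi(\tau)\|_{\mc H}\le \delta\}$, and show that for appropriate $\delta$ and $\tau_0$ this map is a self-map and a strict contraction.

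Three boundedness ingredients drive the scheme. First, the rescaling $(u_0,u_1)\mapsto \mb U_0(T)=(T^2u_0(T\cdot),T^3u_1(T\cdot))$ is a bounded linear map $\mc H_T\to\mc H$ by a direct change of variables $\rho=r/T$ in the definition of the radial Sobolev norms, so $\|\mb U_0(T)\|_{\mc H}\le C(T)R$ whenever $(u_0,u_1)\in \mc B_{T,R}$. Second, Proposition \ref{Prop:free_semigroup} furnishes the uniform bound $\|\mb S_0(\tau)\|_{\mc B(\mc H)}\le M$ for all $\tau\ge 0$ (indeed with exponential decay, but we only need boundedness here). Third, the quadratic-cum-cubic nonlinearity $\mb N_0(\mb u)=(0,(d-2)u_1^2(3-\rho^2 u_1))$ satisfies, by the local Lipschitz estimates developed in Sec.~\ref{Sec:Nonlin_est}, the bound
\[
\|\mb N_0(\mb u)-\mb N_0(\mb v)\|_{\mc H}\le C(R')\,\|\mb u-\mb v\|_{\mc H},\qquad \mb u,\mb v\in \overline{B_{\mc H}(0,R')},
\]
with $\mb N_0(0)=0$. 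Combining these three facts, for $\Psi\in X_\delta$ with $\delta:=2C(T)R$ one estimates
\[
\|\mc K(\Psi)(\tau)\|_{\mc H}\le M\,C(T)R+\tau_0\,M\,C(\delta)\,\delta,
\]
and choosing $\tau_0$ small (depending only on $T$ and $R$) makes $\mc K$ map $X_\delta$ into itself. The same Lipschitz estimate gives $\|\mc K(\Psi)-\mc K(\Psi')\|_{C([0,\tau_0],\mc H)}\le \tau_0\,M\,C(\delta)\,\|\Psi-\Psi'\|_{C([0,\tau_0],\mc H)}$, and, after a further shrinking of $\tau_0$, a contraction constant strictly less than $1$. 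Banach's theorem then yields a unique fixed point $\Psi\in C([0,\tau_0],\mc H)$ solving \eqref{Eq:Duhamel0}, which by Definition \ref{Def:Strong_sol} is the desired strong solution on $\Gamma_{T,\tilde T}$.

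For the Lipschitz dependence on the data, I would subtract the two integral equations for solutions $\Psi,\Psi'$ corresponding to data $(u_0,u_1),(u_0',u_1')\in\mc B_{T,R}$ and estimate
\[
\|\Psi-\Psi'\|_{C([0,\tau_0],\mc H)}\le M\|\mb U_0(T)-\mb U_0'(T)\|_{\mc H}+\tau_0 M C(\delta)\|\Psi-\Psi'\|_{C([0,\tau_0],\mc H)};
\]
after absorbing the last term on the left (permissible since $\tau_0$ was chosen so small that the prefactor is $<1/2$), one obtains $\|\Psi-\Psi'\|\lesssim_{T,R}\|(u_0,u_1)-(u_0',u_1')\|_{\mc H_T}$, as claimed. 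The only step that is not purely soft is the local Lipschitz property of $\mb N_0$ in $\mc H=H^{k_d}_r\times H^{k_d-1}_r(\B^{d+2})$; this requires a bilinear/trilinear Sobolev multiplier estimate at the borderline regularity $k_d=(d+1)/2$ (which is critical, since $(d+1)/2<(d+2)/2$), and is precisely the content of the dedicated Sec.~\ref{Sec:Nonlin_est} we are allowed to invoke.
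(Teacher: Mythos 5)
Your proposal follows essentially the same route as the paper: a Banach fixed-point argument for the Duhamel equation \eqref{Eq:Duhamel0} on a ball in $C([0,\tau_0],\mc H)$, driven by the boundedness of $(u_0,u_1)\mapsto \mb U_0(T):\mc H_T\to\mc H$, the semigroup bound of Proposition \ref{Prop:free_semigroup}, and the local Lipschitz continuity of $\mb N_0$ borrowed from (the proof of) Lemma \ref{Lem:Nonlin_est}; the Lipschitz dependence on the data is also obtained exactly as in the paper, by subtracting the two fixed-point equations and absorbing the Duhamel term.

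One point you should not leave implicit: Banach's theorem gives you uniqueness only within the ball $X_\delta$, whereas the theorem asserts uniqueness of the strong solution in the sense of Definition \ref{Def:Strong_sol}, i.e.\ among \emph{all} of $C([0,\tau(\tilde T)],\mc H)$, with no smallness restriction. To close this, take any two continuous solutions of \eqref{Eq:Duhamel0} on $[0,\tau_0]$, use the local Lipschitz estimate for $\mb N_0$ on a ball containing both trajectories (they are bounded on the compact time interval), and apply Gronwall's inequality to conclude they coincide; this is precisely the extra step the paper records with the remark that uniqueness in the whole space $X_{\mc T}$ follows from local Lipschitz continuity of $\mb N_0$ and Gronwall. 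With that addition your argument is complete and matches the paper's proof.
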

\begin{proof}
	\noindent Fix $T,R>0$. For simplicity, we denote $\mb u_0:=(u_0,u_1)$. First, note that there is $C=C(T)>0$ such that for $\mb u_0 \in \mc H_T$ we have that
	\begin{equation}
		\| \mb U_0(T) \|_{\mc H} \leq C \| \mb u_0 \|_{\mc H_T}.
	\end{equation}
	Then, we define the Banach space
	\begin{equation*}
		X_{\mc T}:=\{ \Psi \in C([0,\mc T],\mc H) :   \| \Psi \|_{X_{\mc T}}:= \sup_{\tau \in [0,\mc T]} \| \Psi(\tau) \|_{\mc H} \}.
	\end{equation*}
	Furthermore, we let
	$
	B_{\mc T,\mc R}:= \{ \Psi \in X_{\mc T}: \| \Psi \|_{X_{\mc T}} \leq \mc R \},
	$
	and we (formally) define
	\begin{equation*}
		K_{\mb u_0}(\Psi)(\tau):= \mb S_0(\tau)\mb U_0(T)+ \int_{0}^{\tau}\mb S_0(\tau-s)\mb N_0(\Psi(s))ds.
	\end{equation*}
	Now we prove that there is a small enough $\mc T$ for which the operator $K_{\mb u_0}$ is well defined on $B_{\mc T,N}$ for $N:=2MCR$ (where $M$ is from Proposition \ref{Prop:free_semigroup}), and has a fixed point there. For this, we need the fact that the nonlinearity $\mb N_0$ is locally Lipschitz. This simply follows from (the proof of) Lemma \ref{Lem:Nonlin_est}, where this property is established for a more complicated nonlinear operator $\mb N$, see \eqref{Def:L'}. Consequently,  we conclude that there is $C_1=C_1(T,R)>0$ such that
	\begin{equation*}
		\| \mb N_0(\mb u) - \mb N_0(\mb v) \|_{\mc H} \leq C_1 \| \mb u - \mb v \|_{\mc H},
	\end{equation*}
	for all $\mb u,\mb v \in \mc H$ with $\|\mb u \|_{\mc H}, \| \mb v \|_{\mc H} \leq N $. Now, for $\mb u_0 \in \mc B_{T,R}$ and $\Psi \in B_{\mc T,N}$ we have the following estimate
	\begin{align*}
		\| K_{\mb u_0}(\Psi)(\tau) \|_{\mc H} &\leq M e^{-\frac{3}{2}\tau}\| \mb U_0(T) \|_\mc H + M\int_{0}^{\tau}e^{-\frac{3}{2}(\tau-s)}\| \mb N_0(\Psi(s))\|_{\mc H}ds\\
		&\leq MC\| \mb u_0 \|_{\mc H_T} + MC_1 \int_{0}^{\tau}\| \Psi(s)\|_{\mc H}ds\\
		&\leq \tfrac{N}{2} + MC_1\| \Phi \|_{X_{\mc T}}\int_{0}^{\tau}ds \\
		& \leq \tfrac{N}{2} + MC_1 N \mc T,
	\end{align*}
	for all $\tau \in [0,\mc T]$. Furthermore, given $\Psi, \Phi \in B_{\mc T,N}$, we have that
	\begin{align*}
		\| K_{\mb u_0}(\Psi)(\tau) - K_{\mb u_0}(\Phi)(\tau)\|_{\mc H} 
		& \leq M\int_{0}^{\tau}e^{-\frac{3}{2}(\tau-s)}\| \mb N_0(\Psi(s)) - \mb N_0(\Phi(s))\|_{\mc H}ds\\
		&\leq MC_1 \int_{0}^{\tau}\| \Psi(s) - \Phi(s)\|_{\mc H}ds \\
		& \leq MC_1 \mc T \| \Psi - \Phi \|_{X_{\mc T}},	
	\end{align*}
	for all $\tau \in [0,\mc T].$ Then by taking $\mc T:= (2MC_1)^{-1}$ we see that given any $\mb u_0 \in \mc B_{T,R}$, the operator $K_{\mb u_0}$ is contractive on $B_{\mc T,N}$. Therefore, by the Banach fixed point theorem, there is a unique $\Psi= \Psi(\mb u_0) \in B_{\mc T,N}$ for which $\Psi= K_{\mb u_0}(\Psi)$, i.e., $\Psi$ satisfies Eq.~\eqref{Eq:Duhamel0} for all $\tau \in [0,\mc T].$ Such $\Psi$ is in fact a unique solution inside the whole space $\mc X_{\mc T}$; this follows from the local Lipschitz continuity of $\mb N_0$ and Gronwall's inequality. The time $\tilde{T}$, existence of which is asserted by the lemma, is therefore given by $\tilde{T}:= \tau^{-1}(\mc T)=T(1-e^{-\mc T})$. 
	
	It remains to prove the continuity of the data-to-solution map $\mb u_0 \mapsto \Psi(\mb u_0) : \mc B_{T,R} \rightarrow \mc X_{\mc T}$. This simply follows from the fact that $K_{\mb u_0}$ is contractive and the following estimate
	\begin{equation*}
		\|	K_{\mb u_0}(\Psi)(\tau) - K_{\tilde{\mb u}_0}(\Psi)(\tau)\|_{\mc H} = \| \mb S_0(\tau)(\mb U_0(T) - \tilde{\mb U}_0(T)) \|_{\mc H} \leq MC\| \mb u_0 - \tilde{\mb u}_0 \|_{\mc H_T},
	\end{equation*}
	where $\tilde{\mb U}_0(T)$ is the initial data operator relative to $\tilde{\mb u}_0$, see \eqref{Def:Initial_data_operator}.
\end{proof}

 Now, if for a particular choice of initial data, the existence time $\tilde{T}$ can be chosen arbitrarily close to $T$, then we say that the corresponding strong solution exists on the (whole) lightcone 
$$\Gamma_T:=\{ (t,r): t\in [0,T), r \leq T-t \}.$$ 
Furthermore, in this case the solution exists  on all lightcones contained in $\Gamma_T$, i.e., on $\Gamma_S$ for $S<T$, and this observation brings about the following definition.
\begin{definition}\label{Def:Blwup_time}
	For initial data  $(u_0,u_1):[0,\infty) \rightarrow \R^2$ that belong to  $\mc H_R$ for every $R>0$, we define
	\begin{equation}\label{Def:Blowup_time}
		T_{u_0,u_1}:=\sup \, \{ T>0 : \text{there exists a strong solution to } \eqref{Eq:YM_equiv}\text{-}\eqref{Eq:YM_equiv_init_cond} \text{ on } \Gamma_T\}.
	\end{equation} 
	Note that the local well-posedness ensures that the set in \eqref{Def:Blowup_time} is non-empty, and therefore $T_{u_0,u_1} > 0$.	If $T_{u_0,u_1} < \infty$ then we say that the corresponding solution \emph{forms a singularity (or blows up) at the origin} in finite time and we furthermore call $T_{u_0,u_1}$ the \emph{blowup time at the origin}. 
\end{definition}

\begin{definition}
	By means of \eqref{Def:Equiv_ansatz}, \eqref{Def:Equiv_init_data}, Definitions \ref{Def:Strong_sol} and \ref{Def:Blwup_time}, and Proposition \ref{Prop:Equiv_1forms}, we also define the notions of \emph{strong equivariant lightcone solutions} and \emph{blowup time at the origin} for the full Yang-Mills system \eqref{Eq:YM_general}-\eqref{Eq:YM_gen_init_cond}.
\end{definition}

  Finally, we address the choice of space $\mc H$. First of all, we note that  in the Euclidean space setting, wave equations are typically studied in the product homogeneous Sobolev spaces $\dot{H}^k \times \dot{H}^{k-1}$. However, on bounded domains, such homogeneous quantities are only semi-norms, and we therefore work with $H^{k} \times H^{k-1}$. 
  Concerning the Sobolev order $k$, in this paper we consider (mostly for technical reasons) only integer values.  Furthermore, we note that for $k > k_d$, the analogous well-posedness theory can be developed. Of course, with appropriate adjustments, which primarily concern the proper choice of the inner product $(\cdot|\cdot)_D$, see \eqref{Def:Inner_product_D} below. Also, if $k < k_d$ then the nonlinearity $\mb N_0$ is not locally Lipschitz continuous, and therefore some sort of Strichartz type space-time integrability should also be imposed in order to ensure local well-posedness.

 To conclude the discussion, we point out that below scaling, i.e., for $k < \frac{d}{2}-1 = k_d - \frac{3}{2}$, we can not expect to have any meaningful well-posedness theory at all.  
 Namely, for the three lowest odd dimensions, Cazenave, Shatah and Tahvildar-Zadeh \cite{CazShaTah98} showed ill-posedness by exhibiting two different (weak) lightcone solutions that originate from the same initial data.

\subsection{Existence of $C_0-$semigroup - Proof of Proposition \ref{Prop:free_semigroup}}
Here we follow the approach of Donninger and Sch\"orkhuber \cite{DonSch17} for the analogous problem for the power nonlinearity wave equations. However, since our setting is somewhat different, we provide detailed proofs to all the claims involved.  We start with defining, for $d \geq 3$ odd, the following formal differential operators
\begin{equation*}
	\mc Du(\rho):= \tfrac{1}{\rho}u'(\rho), \quad D_du(\rho):= \mc D^{\frac{d-3}{2}}\big(\rho^{d-2}u(\rho)\big).  
\end{equation*}
The operator $D_d$ is fundamental for our analysis, and we proceed with proving an important bijection property it satisfies.
\begin{lemma}\label{Lem:D_d_K_d}
	The operator $D_d$ bijectively maps $C^\infty_{e}[0,1]$ onto $C^\infty_{o}[0,1]$. Furthermore, the inverse operator is given by 
	\begin{equation}\label{Def:K_d}
	K_du(\rho):=\rho^{2-d}\mc K^{\frac{d-3}{2}}u(\rho), \quad \text{where}
 \quad	\mc Ku(\rho) := \int_{0}^{\rho}s u(s)ds.
	\end{equation}
	Here we define the values of $D_du$ and $K_du$ (and their derivatives) at the endpoints in terms of limits.
\end{lemma}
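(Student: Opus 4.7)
The plan is to establish four assertions in sequence: (i) $D_d$ maps $C^\infty_e[0,1]$ into $C^\infty_o[0,1]$; (ii) $K_d$ maps $C^\infty_o[0,1]$ into $C^\infty_e[0,1]$; (iii) $K_d D_d = I$ on $C^\infty_e[0,1]$; and (iv) $D_d K_d = I$ on $C^\infty_o[0,1]$. Together these yield that $D_d$ is a bijection between the two spaces with inverse $K_d$. The proofs of (iii) and (iv) reduce to the elementary calculus identities $\mathcal{D}\mathcal{K} = I$ and $\mathcal{K}\mathcal{D}f = f - f(0)$; everything else amounts to bookkeeping of powers of $\rho$ near the origin.

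For (i), I would write $u(\rho) = \tilde u(\rho^2)$ with $\tilde u \in C^\infty[0,1]$, available by the argument used in the proof of Lemma \ref{Lem:C_even}. Starting from the identity $\mathcal{D}(\rho^n f(\rho^2)) = n\rho^{n-2}f(\rho^2) + 2\rho^n f'(\rho^2)$, an induction on $k$ yields
\begin{equation*}
\mathcal{D}^k\bigl(\rho^{d-2}\tilde u(\rho^2)\bigr) = \sum_{j=0}^{k} a_{k,j}\, \rho^{d-2-2(k-j)}\, \tilde u^{(j)}(\rho^2)
\end{equation*}
with positive constants $a_{k,j}$. Specializing to $k = (d-3)/2$, the exponents $d-2-2(k-j) = 2j+1$ sweep the odd integers $1,3,\dots,d-2$, so each summand is $\rho$ times a smooth function of $\rho^2$, hence an element of $C^\infty_o[0,1]$. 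This is exactly (i). The same display gives a valuable by-product: $\mathcal{D}^k(\rho^{d-2}u)$ vanishes at $\rho = 0$ to order $d-2-2k$, a fact needed later.

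For (ii), let $v \in C^\infty_o[0,1]$. Leibniz applied to $(\mathcal{K}v)^{(n)} = (\rho v)^{(n-1)}$ gives $(\mathcal{K}v)^{(n)}(\rho) = (n-1)v^{(n-2)}(\rho) + \rho v^{(n-1)}(\rho)$ for $n \geq 2$, so the even-order derivatives $(\mathcal{K}v)^{(2k)}(0) = (2k-1)v^{(2k-2)}(0)$ all vanish; thus $\mathcal{K}$ preserves $C^\infty_o[0,1]$. Moreover, since $v$ vanishes at $\rho = 0$ and $(\mathcal{K}v)(\rho) = \int_0^\rho sv(s)\,ds$, each application of $\mathcal{K}$ raises the order of vanishing at $0$ by $2$. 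Hence $\mathcal{K}^{(d-3)/2}v$ is odd smooth and vanishes at $0$ to order at least $d-2$. Taylor's theorem with integral remainder then factors $\mathcal{K}^{(d-3)/2}v(\rho) = \rho^{d-2}w(\rho)$ with $w \in C^\infty[0,1]$, and comparing parities (both sides and $\rho^{d-2}$ odd) forces $w$ to be even, i.e., $K_d v = w \in C^\infty_e[0,1]$.

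For (iii)--(iv), $(\mathcal{K}u)'(\rho) = \rho u(\rho)$ immediately gives $\mathcal{D}\mathcal{K} = I$, and the fundamental theorem of calculus gives $\mathcal{K}\mathcal{D}f = f - f(0)$. Iterating the former, $D_d K_d v = \mathcal{D}^{(d-3)/2}\mathcal{K}^{(d-3)/2}v = v$ after cancelling $\rho^{d-2}\rho^{2-d}$. For the converse I compute $K_d D_d u = \rho^{2-d}\mathcal{K}^{(d-3)/2}\mathcal{D}^{(d-3)/2}(\rho^{d-2}u)$ and iterate $\mathcal{K}\mathcal{D} = I - \mathrm{ev}_0$; each pass produces a boundary correction $(\mathcal{D}^k f)(0)$ with $f = \rho^{d-2}u$ and $0 \le k \le (d-5)/2$, but by the vanishing-order statement from (i) each such $\mathcal{D}^k f$ vanishes at $0$ to order $d-2-2k \geq 3$, so every correction is zero. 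Hence $\mathcal{K}^{(d-3)/2}\mathcal{D}^{(d-3)/2}(\rho^{d-2}u) = \rho^{d-2}u$ and $K_d D_d u = u$. The only delicate point throughout is the endpoint bookkeeping at $\rho = 0$, which the explicit expansion in (i) renders transparent; this expansion is the computational core of the whole argument.
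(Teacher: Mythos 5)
Your proof is correct and follows essentially the same route as the paper, which simply asserts that $\mc D^{(d-3)/2}$ preserves parity and smoothness up to zero and that the identities $D_dK_dw=w$, $K_dD_du=u$ follow from the definitions. Your explicit expansion of $\mc D^k\big(\rho^{d-2}\tilde u(\rho^2)\big)$, the resulting vanishing orders at $\rho=0$, and the verification that the boundary terms in the iteration of $\mc K\mc D = I - \mathrm{ev}_0$ vanish are exactly the details the paper leaves implicit, so the argument is a (correct) fleshed-out version of the paper's proof rather than a different one.
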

\begin{proof}
	Let $u \in C^\infty_{e}[0,1]$ and $w \in C^\infty_{o}[0,1]$ be arbitrary. Since $(\cdot)^{d-2}u \in  C^\infty_{o}[0,1]$ and the operator $\mc D^{(d-3)/2}$ preserves both parity and smoothness up to zero, we have that $D_du \in C^\infty_{o}[0,1].$ Similarly $K_d w \in C^\infty_{e}[0,1]$ and simply from the definitions of the operators we have that $D_d K_d w = w$ and $K_d D_d u = u$.
\end{proof}
 Now, by means of $D_d$ we introduce a sesquilinear form on $C^\infty_{e}[0,1]$,
\begin{equation}\label{Def:Inner_product_D}
	(\mb u| \mb v)_D:=(D_{d+2}u_1 | D_{d+2}v_1)_{\dot{H}^1(0,1)} + (D_{d+2}u_2 | D_{d+2}v_2)_{L^2(0,1)},
\end{equation}
and the corresponding semi-norm $\| \mb u \|_D:=\sqrt{(\mb u| \mb u)_D}$. Then we have the following important equivalence result.
\begin{proposition}\label{Prop:Equivalence_D}
	We have that 
	\begin{equation}\label{Eq:Equiv_norm}
		\| {\bf u} \|_D \simeq \| {\bf u} \|_{\mc H}
	\end{equation}
	for all ${\bf u} \in C^\infty_{\text{e}}[0,1]^2$.
\end{proposition}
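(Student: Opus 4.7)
The strategy is to split the claim into two componentwise equivalences
\begin{equation*}
\| D_{d+2} u_1 \|_{\dot{H}^1(0,1)} \simeq \| u_1(|\cdot|) \|_{H^{k_d}(\mathbb{B}^{d+2})}, \qquad \| D_{d+2} u_2 \|_{L^2(0,1)} \simeq \| u_2(|\cdot|) \|_{H^{k_d - 1}(\mathbb{B}^{d+2})},
\end{equation*}
valid for $u_1, u_2 \in C^\infty_e[0,1]$, from which \eqref{Eq:Equiv_norm} is immediate. Setting $n := d+2$, both equivalences rest on the classical reduction of the radial wave operator in odd dimension $n$ to the one-dimensional second derivative via the $D_n$-transform.

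The key algebraic ingredient is the intertwining identity
\begin{equation*}
(D_n u)''(\rho) = D_n\bigl(\triangle^n_\rho u\bigr)(\rho), \qquad \triangle^n_\rho := \partial^2_\rho + \tfrac{n-1}{\rho}\partial_\rho, \qquad u \in C^\infty_e[0,1],
\end{equation*}
which can be verified by induction on $(n-3)/2$ starting from the elementary base case $n=3$, where $D_3 u = \rho u$ and $(\rho u)'' = \rho(u'' + 2u'/\rho) = \rho \triangle^3_\rho u$. Iteration immediately yields $(D_n u)^{(2k)} = D_n(\triangle^n_\rho)^k u$ and $(D_n u)^{(2k+1)} = \partial_\rho D_n(\triangle^n_\rho)^k u$, providing an explicit translation between 1D derivatives of $D_n u$ and iterated radial Laplacians applied to $u$.

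With this in hand, for each component the plan is to expand $|D_n u|^2$ and $|(D_n u)'|^2$ using the definition $D_n = \mc D^{(n-3)/2}(\rho^{n-2}\,\cdot\,)$, integrate over $(0,1)$, and integrate by parts $(n-3)/2$ times in order to move the $\mc D$-factors onto the conjugate. The outcome is a weighted radial integral of the form $\int_0^1 |\cdot|^2 \rho^{n-1} d\rho$, whose integrand is a radial derivative of $u$ of order $k_d - 1$ or $k_d$ respectively, plus one explicit boundary contribution at $\rho = 1$. Identifying these integrals with the radial Sobolev norms on $\mathbb{B}^n$ then proceeds by absorbing all lower-order radial derivatives into the top-order term using Hardy-type inequalities, where the parity constraints of $C^\infty_e[0,1]$ are essential (they both annihilate the boundary contributions at the origin and make the relevant Hardy weights integrable), while controlling the single boundary term at $\rho = 1$ in both directions via a Sobolev trace estimate and a weighted Poincar\'e-type inequality.

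The main technical obstacle will be the bookkeeping of the integration by parts: iterating $\mc D$ spawns many intermediate terms whose signs and coefficients must be carefully tracked so that the algebraic cancellations occur and Hardy absorption becomes applicable. Once this is in order, the constants in \eqref{Eq:Equiv_norm} emerge explicitly from the calculation.
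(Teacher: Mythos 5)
Your reduction to the two componentwise equivalences and your soft ingredients (the intertwining identity of Lemma \ref{Lem:Comm_rel}, Hardy inequalities, parity at the origin) are reasonable, but the central step of your plan is asserted rather than proved, and the asserted shape of the integration-by-parts identity is already false in the lowest case. For $d=5$ one has $D_7u=15\rho u+9\rho^2u'+\rho^3u''$, and a direct computation for real $u\in C^\infty_e[0,1]$ gives
\begin{equation*}
\|D_7u\|_{L^2(0,1)}^2=\|(\cdot)^3u''\|_{L^2(0,1)}^2+6\,\|(\cdot)^2u'\|_{L^2(0,1)}^2+\big(75\,u(1)^2+30\,u(1)u'(1)+9\,u'(1)^2\big),
\end{equation*}
so the outcome is not ``a weighted integral of the top-order radial derivative plus one boundary contribution'': a lower-order interior term with a different weight survives, the $\|(\cdot)u\|_{L^2(0,1)}$ piece of the target norm $\|u(|\cdot|)\|_{H^{2}(\B^7)}\simeq\|(\cdot)u\|_{L^2(0,1)}+\|(\cdot)^2u'\|_{L^2(0,1)}+\|(\cdot)^3u''\|_{L^2(0,1)}$ (Lemma \ref{Lem:Equiv(1)}) cancels completely and has to be recovered from the boundary form via \eqref{Eq:Hardy(1)}, and the boundary contribution is a full quadratic form in the boundary data, not a single term. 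Consequently, for the hard direction ``$\gtrsim$'' your argument hinges on two facts you never establish: that after the bookkeeping all surviving interior coefficients are nonnegative (or at least controlled), and that the boundary quadratic form in $u(1),u'(1),\dots$ is positive enough to control the boundary values needed for the Hardy absorption --- and this uniformly in all odd $d\geq 5$, where the number of boundary values and cross terms grows with $d$. That no-cancellation/positivity statement is precisely the crux of the proposition (it is what excludes cancellations inside $D_{d+2}u$ hiding the size of the individual weighted derivatives), so flagging it as ``the main technical obstacle'' leaves a genuine gap rather than a routine verification.

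For comparison, the paper sidesteps the sign bookkeeping entirely: the lower bound in Lemma \ref{Lem:EquivD} is obtained by inverting $D_{d+2}$ through the explicit operator $K_{d+2}$ of Lemma \ref{Lem:D_d_K_d}, writing each weighted derivative $\rho^{n+1}u^{(n)}$ (resp.\ $\rho^{n}u^{(n)}$) as a finite combination of $\rho^{-2j}\mc K^j w$ (and $w'$) with $w=D_{d+2}u$, and then applying the origin Hardy inequality \eqref{Eq:Hardy(3)}; the identification with $H^k(\B^{d+2})$ is handled separately in Lemma \ref{Lem:Equiv(1)}. If you wish to keep your route, you must prove the positivity structure described above for every odd $d$ (or fall back on the $K_{d+2}$-inversion, at which point you have reproduced the paper's argument). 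Note also that the iterated identity $(D_nu)^{(2k)}=D_n(\triangle^n_\rho)^k u$ is not really operative here, since only $D_{d+2}u$ and its first derivative enter $\|\cdot\|_D$.
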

\noindent 
The proof is somewhat long and furthermore contains no insight essential to the rest of this section. We therefore provide it at the end, see Sec.~\ref{Sec:Equivalence}.
 Proposition \ref{Prop:Equivalence_D} says that the semi-norm $\| \cdot \|_D$ is in fact a norm. Furthermore, the closure of $C^\infty_e[0,1]^2$ under $\| \cdot \|_D$ yields a Hilbert space which we, due to~\eqref{Eq:Equiv_norm}, identify with $\mc H$. Subsequently, the equivalence \eqref{Eq:Equiv_norm} holds throughout the whole space $\mc H$.
 For the ensuing analysis we also need the following lemma.
\begin{lemma}\label{Lem:Comm_rel}
	Let $u \in C^{\infty}(0,1)$ and let $\Lambda u(\rho):=-\rho u'(\rho)$. Then 
	\begin{equation*}
		D_d \Lambda u = \Lambda D_d u + D_du \quad \text{and} \quad D_d \triangle_{d} u = (D_du)'',
 	\end{equation*} 
 	where $\triangle_d$ is the $d$-dimensional radial Laplace operator $\triangle_du(\rho) := \rho^{1-d}\big(\rho^{d-1}u'(\rho)\big)' $
\end{lemma}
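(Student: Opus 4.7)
The plan is to reduce both identities to a handful of elementary commutator relations among $\mc D$, the multiplication-by-$\rho^k$ operator $M_k u(\rho) := \rho^k u(\rho)$, and $\Lambda$, then apply them to the factorization $D_d = \mc D^{(d-3)/2} M_{d-2}$. First I would verify by direct differentiation the three base relations
\[
[\mc D, \Lambda] = -2\mc D, \qquad [M_k, \Lambda] = k M_k, \qquad [\mc D, M_2] = 2,
\]
and iterate the first and third by one-line inductions on $n$ to obtain
\[
\mc D^n \Lambda = \Lambda \mc D^n - 2n\,\mc D^n, \qquad \mc D^n M_2 = M_2 \mc D^n + 2n\,\mc D^{n-1}.
\]

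For the first identity I would just push $\Lambda$ through $D_d$ in two steps: use $M_{d-2}\Lambda = \Lambda M_{d-2} + (d-2) M_{d-2}$ to get the inner $\Lambda$ past $M_{d-2}$, then apply the iterated relation $\mc D^{(d-3)/2}\Lambda = \Lambda\mc D^{(d-3)/2} - (d-3)\mc D^{(d-3)/2}$ to move it past $\mc D^{(d-3)/2}$. This yields
\[
D_d \Lambda u = \bigl(\Lambda \mc D^{(d-3)/2} - (d-3)\mc D^{(d-3)/2}\bigr) M_{d-2} u + (d-2) D_d u,
\]
and the two correction terms collapse via $-(d-3) + (d-2) = 1$ into exactly $\Lambda D_d u + D_d u$.

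For the second identity I would first put the radial Laplacian in Sturm--Liouville form, which gives $\rho^{d-2}\triangle_d u = \tfrac{1}{\rho}(\rho^{d-1} u')' = \mc D(\rho^{d-1} u')$, and hence $D_d \triangle_d u = \mc D^{(d-1)/2}(\rho^{d-1} u')$. The algebraic identity $\rho^{d-1} u' = \rho^2 \mc D(\rho^{d-2} u) - (d-2)\rho^{d-2} u$, which is immediate from expanding $\mc D(\rho^{d-2}u) = \tfrac{1}{\rho}(\rho^{d-2}u)'$, reduces the problem to sweeping $\mc D^{(d-1)/2}$ past the factor $M_2$. Applying $\mc D^{(d-1)/2} M_2 = M_2 \mc D^{(d-1)/2} + (d-1)\mc D^{(d-3)/2}$ and collecting yields
\[
D_d \triangle_d u = \rho^2 \mc D^2 D_d u + (d-1)\mc D\,D_d u - (d-2)\mc D\,D_d u = \rho^2 \mc D^2 D_d u + \mc D\,D_d u.
\]
A one-line computation then gives the operator identity $\rho^2 \mc D^2 + \mc D = \partial_\rho^2$ (a mere restatement of $\partial_\rho = \rho\mc D$), which finishes the argument.

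The whole proof is pure algebraic bookkeeping with no genuine obstacle. The only mildly delicate point is keeping the coefficients $-(d-3), (d-2)$ in the first identity and $(d-1), (d-2)$ in the second straight: they are precisely tuned to telescope into the clean remainders $+1$ and $0$ that the lemma asserts.
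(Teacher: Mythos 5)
Your proof is correct: all the commutator identities and their inductive iterations check out, and the coefficient bookkeeping in both identities telescopes exactly as you claim. The paper disposes of this lemma with a one-line ``follows by induction,'' and your argument is essentially that same induction (on the power of $\mc D$), just organized cleanly through the commutators $[\mc D,\Lambda]$, $[M_k,\Lambda]$, $[\mc D,M_2]$.
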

\begin{proof}
	The proof follows by induction.
\end{proof}
 With these preparations at hand, we prove the following dissipation property of the operator $\widetilde{\mb L}_0$.
\begin{lemma}\label{Lem:Dissipation}
	We have that
	\begin{equation}\label{Eq:Dissipative_est}
		\Re (\widetilde{\bf L}_0 {\bf u} | {\bf u})_D \leq -\tfrac{3}{2}\| {\bf u} \|^2_D	\end{equation}
	for all ${\bf u} \in \mc D(\widetilde{\bf L}_0)$.
\end{lemma}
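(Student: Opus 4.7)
The plan is to conjugate $\widetilde{\mathbf L}_0$ by $D_{d+2}$ so as to reduce the estimate to a one-dimensional integration-by-parts argument, and then show that the only surviving boundary term (at $\rho=1$) has a favorable sign, while the boundary terms at $\rho=0$ vanish automatically because $D_{d+2}$ maps $C^\infty_e[0,1]$ into $C^\infty_o[0,1]$ (Lemma \ref{Lem:D_d_K_d}).

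First, for $\mathbf u = (u_1,u_2) \in C^\infty_e[0,1]^2$, set $U_i := D_{d+2} u_i$, so that $U_i \in C^\infty_o[0,1]$, and in particular $U_1(0)=U_2(0)=0$. Using the commutation identities in Lemma \ref{Lem:Comm_rel} (applied with $d$ replaced by $d+2$), namely $D_{d+2}\Lambda = \Lambda D_{d+2} + D_{d+2}$ and $D_{d+2}\triangle_{d+2} = \partial_\rho^2 D_{d+2}$, a direct computation gives
\begin{equation*}
D_{d+2}\bigl(\widetilde{\mathbf L}_0 \mathbf u\bigr) = \begin{pmatrix} U_2 - \rho U_1' - U_1 \\ U_1'' - \rho U_2' - 2U_2 \end{pmatrix}.
\end{equation*}
Thus, by the definition of $(\cdot|\cdot)_D$ in \eqref{Def:Inner_product_D},
\begin{equation*}
(\widetilde{\mathbf L}_0\mathbf u\,|\,\mathbf u)_D = \int_0^1 \bigl(U_2 - \rho U_1' - U_1\bigr)' \overline{U_1'}\, d\rho + \int_0^1 \bigl(U_1'' - \rho U_2' - 2U_2\bigr)\overline{U_2}\, d\rho.
\end{equation*}

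Second, I would take the real part of both integrals and integrate by parts. Each of the terms of the form $\rho\,\Re(U_i^{(k)}\overline{U_i^{(k-1)}})$ is a total derivative $\tfrac12 \rho\, \partial_\rho|U_i^{(k-1)}|^2$, which after integration by parts yields a boundary term at $\rho=1$ together with $\tfrac12\|U_i^{(k-1)}\|_{L^2}^2$. The mixed term $\int_0^1 U_1''\,\overline{U_2}\,d\rho$ gets integrated by parts once, producing the boundary term $U_1'(1)\overline{U_2(1)}$ (the contribution at $\rho=0$ vanishes since $U_2(0)=0$) and the interior term $-\int_0^1 U_1'\overline{U_2'}\,d\rho$. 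A key cancellation occurs: the cross terms $\Re\!\int_0^1 U_2'\overline{U_1'}\,d\rho$ from the first integral and $-\Re\!\int_0^1 U_1'\overline{U_2'}\,d\rho$ from the second are equal in real part and therefore drop out.

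Collecting everything, the interior contributions combine to $-\tfrac32\bigl(\|U_1'\|_{L^2}^2 + \|U_2\|_{L^2}^2\bigr) = -\tfrac32\|\mathbf u\|_D^2$, while the surviving boundary terms at $\rho=1$ assemble into
\begin{equation*}
-\tfrac12 |U_1'(1)|^2 + \Re\bigl(U_1'(1)\overline{U_2(1)}\bigr) - \tfrac12|U_2(1)|^2 = -\tfrac12 \bigl|U_1'(1) - U_2(1)\bigr|^2.
\end{equation*}
Hence $\Re(\widetilde{\mathbf L}_0\mathbf u\,|\,\mathbf u)_D = -\tfrac32\|\mathbf u\|_D^2 - \tfrac12|U_1'(1) - U_2(1)|^2 \leq -\tfrac32\|\mathbf u\|_D^2$, which is \eqref{Eq:Dissipative_est}.

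The only nontrivial step is the conjugation with $D_{d+2}$; everything after that is one-dimensional integration by parts, and the main obstacle would have been unwanted boundary terms at $\rho=0$, which are ruled out by the odd parity of $U_1,U_2$ guaranteed by Lemma \ref{Lem:D_d_K_d}.
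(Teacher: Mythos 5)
Your proof is correct and follows essentially the same route as the paper: conjugation of $\widetilde{\mb L}_0$ by $D_{d+2}$ via the commutation identities of Lemma \ref{Lem:Comm_rel}, reducing $(\cdot|\cdot)_D$ to one-dimensional integration by parts, with the boundary contributions at $\rho=1$ combining into $-\tfrac12|U_1'(1)-U_2(1)|^2\le 0$ and the $\rho=0$ terms vanishing by the odd parity furnished by Lemma \ref{Lem:D_d_K_d}. The resulting identity $\Re(\widetilde{\mb L}_0\mb u|\mb u)_D=-\tfrac32\|\mb u\|_D^2-\tfrac12|U_1'(1)-U_2(1)|^2$ is exactly the one obtained in the paper.
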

\begin{proof}
	Let $\mb u \in \mc D(\widetilde{\mb L}_0)$. Then by means of Lemma \ref{Lem:Comm_rel} we have that 
	\begin{equation}\label{Eq:D_dL_0}
		D_{d+2}{\widetilde{\bf L}}_0\mb u=\mb A_0D_{d+2}\mb u,
	\end{equation}
where
	\begin{equation}\label{Eq:Sys_A0}
		\mb A_0 \mb w(\rho):= 
		\begin{pmatrix}
		w_2(\rho)-\rho w_1'(\rho)- w_1(\rho) \vspace{1.5mm}\\
		w_1''(\rho) - \rho w_2'(\rho)-2w_2(\rho)
		\end{pmatrix}.
	\end{equation} 
	This then yields
	\begin{gather*}
		\Re \big(D_{d+2}({\widetilde{\bf L}}_0\mb u)_1|D_{d+2}u_1\big)_{\dot{H}^1(0,1)} 
		= 
		\Re (w_2'|w_1')_{L^2(0,1)} + \Re (\Lambda w_1|w_1)_{\dot{H}^1(0,1)} - \| w_1 \|^2_{\dot{H}^1(0,1)},\\
		\Re \big( D_{d+2}({\widetilde{\bf L}}_0\mb u)_2|  D_{d+2}u_2\big)_{L^2(0,1)} 
		= 
		\Re ( w_1''| w_2)_{L^2(0,1)} + \Re ( \Lambda w_2| w_2)_{L^2(0,1)} - 2\|  w_2 \|^2_{L^2(0,1)}.
	\end{gather*}
	Furthermore, by partial integration we get
	\begin{gather*}
   \Re (\Lambda w_1|w_1)_{\dot{H}^1(0,1)} = -\tfrac{1}{2}\|    w_1 \|^2_{\dot{H}^1(0,1)}- \tfrac{1}{2}|w_1'(1)|^2,\\
    \Re ( \Lambda w_2| w_2)_{L^2(0,1)}=
    \tfrac{1}{2}\| w_2 \|^2_{L^2(0,1)} - \tfrac{1}{2} | w_2(1) |^2, \\
	\Re ( w_1''| w_2)_{L^2(0,1)} = -\Re (w_1'|w_2')_{L^2(0,1)} + \Re \big( w_1'(1)\overline{ w_2(1)}\big).		
	\end{gather*}
	Finally, by adding up we obtain
	\begin{equation}
		\Re (\widetilde{\bf L}_0 {\bf u} | {\bf u})_D 
		= 
		-\tfrac{3}{2}\| {\bf u} \|^2_D -\tfrac{1}{2}|w_1'(1)- w_2(1)|^2
		\leq -\tfrac{3}{2}\| {\bf u} \|^2_D.
	\end{equation}
\end{proof}

 We also need the following density property of $\widetilde{\mb L}_0$.

\begin{lemma}\label{Lem:Range_dense}
	 The set $\rg  \widetilde{\bf L}_0$ is dense in $\mc H$.
\end{lemma}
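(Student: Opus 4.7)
Since $C_e^\infty[0,1]^2 = \mc D(\widetilde{\mb L}_0)$ is dense in $\mc H$ (as noted after Lemma \ref{Lem:C_even}), it suffices to exhibit, for each $\mb f$ in some dense subset of $\mc H$, a preimage $\mb u \in \mc D(\widetilde{\mb L}_0)$. A convenient choice is $\mb f \in C_e^\infty[0,1]^2$. My plan is to conjugate by $D_{d+2}$ and reduce the two-component equation $\widetilde{\mb L}_0 \mb u = \mb f$ to a scalar linear ODE. Using the intertwining identity \eqref{Eq:D_dL_0} together with the bijection $D_{d+2}: C_e^\infty[0,1] \to C_o^\infty[0,1]$ of Lemma \ref{Lem:D_d_K_d}, setting $\mb w := D_{d+2}\mb u$ and $\mb g := D_{d+2}\mb f$, the problem transforms into solving $\mb A_0 \mb w = \mb g$ for $\mb w \in C_o^\infty[0,1]^2$ with prescribed $\mb g \in C_o^\infty[0,1]^2$; one then recovers $\mb u = (K_{d+2}w_1, K_{d+2}w_2) \in \mc D(\widetilde{\mb L}_0)$ via the explicit inverse $K_{d+2}$.

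\smallskip

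From the first row of $\mb A_0 \mb w = \mb g$ one reads off $w_2 = \rho w_1' + w_1 + g_1$, and substituting into the second row gives the scalar ODE
\begin{equation*}
(1-\rho^2) w_1'' - 4\rho w_1' - 2 w_1 = h, \qquad h := g_2 + \rho g_1' + 2 g_1 \in C_o^\infty[0,1].
\end{equation*}
The key observation, to be verified by direct substitution, is that this equation admits the two explicit linearly independent homogeneous solutions $y_1(\rho) = \rho/(1-\rho^2)$ and $y_2(\rho) = 1/(1-\rho^2)$, with Wronskian $W(y_1,y_2) = -(1-\rho^2)^{-2}$. Variation of parameters then yields a one-parameter family of particular solutions, from which I would single out
\begin{equation*}
w_1(\rho) := \frac{1}{1-\rho^2}\left[\int_0^\rho (\rho - s) h(s)\, ds - \rho \int_0^1 (1-s) h(s)\, ds\right],
\end{equation*}
the second term in the bracket being tuned precisely to cancel the residue of the singular homogeneous branch at $\rho = 1$.

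\smallskip

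It remains to verify that this $w_1$ actually lies in $C_o^\infty[0,1]$; this is the main obstacle. Smoothness and oddness at $\rho = 0$ follow either from the parity $\rho \mapsto -\rho$ symmetry of the formula (using oddness of $h$) or, equivalently, by substituting $w_1 = \sum c_k \rho^k$ into the ODE to obtain the two-term recursion $(k+1)(k+2)(c_{k+2} - c_k) = h_k$ and then observing that $c_0 = 0$ together with $h_{2k} = 0$ forces $c_{2k} = 0$ for all $k$. Smoothness at $\rho = 1$ is more delicate because of the prefactor $(1-\rho^2)^{-1}$: by construction the bracketed numerator has a simple zero at $\rho = 1$, so $w_1$ is at least bounded there, and I would promote this to $C^\infty$-regularity by appealing to the Frobenius indicial analysis at the regular singular point $\rho = 1$, whose indicial exponents $0$ and $-1$ guarantee that any solution lacking the $(1-\rho)^{-1}$ branch is analytic in a neighborhood of $\rho = 1$. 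With $w_1 \in C_o^\infty[0,1]$ secured, $w_2 := \rho w_1' + w_1 + g_1$ automatically lies in $C_o^\infty[0,1]$, and the preimage $\mb u = (K_{d+2} w_1, K_{d+2} w_2) \in C_e^\infty[0,1]^2$ satisfies $\widetilde{\mb L}_0 \mb u = \mb f$, completing the argument.
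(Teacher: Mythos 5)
Your proposal is correct and follows essentially the same route as the paper: you reduce to data $\mb f \in C^\infty_e[0,1]^2$, conjugate by $D_{d+2}$, arrive at the same scalar equation $(1-\rho^2)w_1''-4\rho w_1'-2w_1=h$ with the explicit fundamental system $\rho(1-\rho^2)^{-1}$, $(1-\rho^2)^{-1}$, and your variation-of-parameters formula is algebraically identical to the paper's \eqref{Eq:w_at_0}. The only point to tighten is at the endpoints: since $h$ is merely smooth (not analytic), the correct conclusion at $\rho=1$ is smoothness rather than analyticity, which you get by rewriting the solution with base point $\rho=1$ (as in the paper's \eqref{Eq:w_at_1}) so that the singular branch is manifestly absent, and at $\rho=0$ it is your parity argument, not the power-series recursion, that applies.
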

\begin{proof}
	Let ${\bf f} \in C^\infty_e[0,1]^2$. We prove that the equation 
	\begin{equation}\label{Eq:Eigenv_Range}
		\widetilde{\bf L}_0\bf u = \bf f
	\end{equation}
	  is solvable in $\mc D(\widetilde{\bf L}_0) = C^\infty_e[0,1]^2$. 
	First, to get rid of the apparent singularity at $\rho=0$, we apply the operator $D_{d+2}$ to both sides of Eq.~\eqref{Eq:Eigenv_Range}. According to \eqref{Eq:D_dL_0}-\eqref{Eq:Sys_A0}, this yields
	\begin{gather}
			w_2(\rho)-\rho w_1'(\rho)- w_1(\rho)= D_{d+2}f_1(\rho), \label{Eq:w_2}\\
			w_1''(\rho) - \rho w_2'(\rho)-2w_2(\rho)= D_{d+2}f_2(\rho),\nonumber
	\end{gather}
	for $\mb w = D_{d+2}\mb u$. Furthermore, from this system we obtain an equation for $w_1$
\begin{equation}\label{Eq:Du_1}
	(1-\rho^2)w_1''(\rho) - 4\rho w_1'(\rho) - 2w_1(\rho) = g(\rho),
\end{equation}
where
$
	g(\rho)=  2D_{d+2}f_1(\rho) +\rho (D_{d+2}f_1)'(\rho) + D_{d+2}f_2(\rho).
$
Now, note that $w_{1,0}(\rho):= \rho (1-\rho^2)^{-1}$ and $w_{1,1}(\rho):= (1-\rho^2)^{-1}$ form a fundamental solution system to (the homogeneous version of)  Eq.~\eqref{Eq:Du_1}. Then, since the Wronskian is $W(w_{1,0},w_{1,1})(\rho)=-(1-\rho^2)^{-2}$, by the variation of parameters formula we find a particular solution
\begin{align}
	w_1(\rho)&=
	-\frac{\rho}{1-\rho^2}\int_{0}^{1}(1-s)g(s)ds + \frac{\rho}{1-\rho^2}\int_{0}^{\rho}g(s)ds - \frac{1}{1-\rho^2}\int_{0}^{\rho}sg(s)ds \label{Eq:w_at_0} \\
	&=-\frac{1}{1+\rho}\int_{0}^{1}sg(s)ds -\frac{\rho}{1-\rho^2}\int_{\rho}^{1}g(s)ds + \frac{1}{1-\rho^2}\int_{\rho}^{1}sg(s)ds \label{Eq:w_at_1}.
\end{align}
Since $g \in C_o^\infty[0,1]$, from~\eqref{Eq:w_at_0} we infer smoothness of $w_1$ at $\rho=0$, with $w_1^{(2k)}(0)=0$, $k \in \mathbb{N}_0$. Also, from~\eqref{Eq:w_at_1} we have smoothness of $w_1$ at $\rho=1$. Consequently $w_1 \in C^\infty_o[0,1]$. Then, from Eq.~\eqref{Eq:w_2} we obtain $w_2$ and conclude that $ K_{d+2} \mb w$ belongs to $\mc D(\widetilde{\bf L}_0)$. Finally, according to \eqref{Eq:D_dL_0} and Lemma \ref{Lem:D_d_K_d}, we conclude that $\mb u := K_{d+2} \mb w$ solves Eq.~\eqref{Eq:Eigenv_Range}.
\end{proof}

\begin{proof}[Proof of  Proposition \ref{Prop:free_semigroup}]
From Lemmas \ref{Lem:Range_dense} and \ref{Lem:Dissipation} we have that $\widetilde{\mb L}_0$ is closable, and its closure, denoted by $\mb L_0$, satisfies $\rg \mb L_0=\mc H$. Furthermore, because of its closedness, the operator $\mb L_0$ satisfies the dissipation property \eqref{Eq:Dissipative_est} on the whole of $\mc D(\mb L_0)$. Now, the Lumer-Philips theorem (for the precise statement we use here see \cite{RenRog04}, p.~407, Theorem 12.22) yields the existence of a strongly continuous one-parameter semigroup $(\mb S_0(\tau))_{\tau \geq 0}$ of bounded operators on $\mc H$, which is generated by $\mb L_0$, and for which
	\begin{equation*}
	\| \mb S_0(\tau) \mb{u}\|_{D} \leq e^{-\frac{3}{2}\tau}\| \mb{u} \|_{D}
	\end{equation*}
for $\tau \geq 0$ and $\mb u \in \mc H$. Finally, via equivalence \eqref{Eq:Equiv_norm}, this yields Proposition \ref{Prop:free_semigroup}.	
\end{proof}

\subsubsection{Proof of Proposition \ref{Prop:Equivalence_D}}\label{Sec:Equivalence}
\noindent We first prove several forms of Hardy's inequality we need.
\begin{lemma}\label{Lem:Hardy_on_[0,1]}
	Let $\alpha> -\frac{1}{2}$, $\beta > -1$ and $n \in \mathbb{N}$. Then
	\begin{align}
		&\| (\cdot)^{\alpha} u \|_{L^2(0,1)} 
		\lesssim 
		|u(1)|+\| (\cdot)^{\alpha+1} u' \|_{L^2(0,1)} \label{Eq:Hardy(1)}
	\end{align}
	and
	\begin{align} 
		&|u(1)| 
		\lesssim
		\|(\cdot)^{\beta}u \|_{L^2(0,1)}
		+ \|(\cdot)^{\beta+1}u' \|_{L^2(0,1)} \label{Eq:Hardy(2)}
	\end{align}
	for all $u \in C^1[0,1]$. Furthermore,
	\begin{equation}\label{Eq:Hardy(3)}
		\| (\cdot)^{-n} u \|_{L^2(0,1)} \lesssim \| (\cdot)^{-n+1}u' \|_{L^2(0,1)}
	\end{equation}
	for all $u \in C^n[0,1]$ with $u^{(j)}(0)=0$, $j=0,1,\dots,n-1$.
\end{lemma}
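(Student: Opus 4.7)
My plan is to prove all three estimates by a single mechanism: integrate by parts once to trade the weighted $L^2$ norm of $u$ for a boundary value at $\rho=1$ plus a cross integral involving $u'$, then close the estimate via Cauchy-Schwarz and Young's inequality. The choice of antiderivative is dictated by the weight: I differentiate $\rho^{2\alpha+1}/(2\alpha+1)$ for \eqref{Eq:Hardy(1)}, $\rho^{2\beta+2}/(2\beta+2)$ for \eqref{Eq:Hardy(2)}, and $\rho^{-2n+1}/(-2n+1)$ for \eqref{Eq:Hardy(3)}. The only thing to verify carefully in each case is that the boundary trace at $\rho=0$ vanishes, which is precisely what the hypotheses encode.

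For \eqref{Eq:Hardy(1)}, since $\alpha>-\tfrac12$ gives $2\alpha+1>0$, the weight $\rho^{2\alpha+1}$ vanishes at the origin, and integration by parts yields an identity of the form $(2\alpha+1)\|(\cdot)^\alpha u\|_{L^2(0,1)}^2 = |u(1)|^2 - 2\Re\int_0^1 \rho^{2\alpha+1}u'\overline{u}\,d\rho$. Estimating the cross term by $2\|(\cdot)^{\alpha+1}u'\|_{L^2}\|(\cdot)^\alpha u\|_{L^2}$ and absorbing $\|(\cdot)^\alpha u\|_{L^2}$ into the left-hand side by Young's inequality gives the claim. Estimate \eqref{Eq:Hardy(2)} is the dual computation: starting from $|u(1)|^2 = \int_0^1 \bigl(\rho^{2\beta+2}|u|^2\bigr)'d\rho$, the condition $\beta>-1$ kills the boundary value at $0$ and makes the resulting $\rho^{2\beta+1}$-weighted integral convergent; bounding $\rho^{2\beta+1}\leq\rho^{2\beta}$ on $(0,1)$ handles the first piece by $\|(\cdot)^\beta u\|_{L^2}^2$, while Cauchy-Schwarz on the cross integral produces $\|(\cdot)^{\beta+1}u'\|_{L^2}\|(\cdot)^\beta u\|_{L^2}$, and completing the square yields \eqref{Eq:Hardy(2)}.

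Estimate \eqref{Eq:Hardy(3)} uses the same integration by parts with weight $\rho^{-2n+1}$, producing $(2n-1)\|(\cdot)^{-n}u\|_{L^2}^2 = -|u(1)|^2 + 2\Re\int_0^1 \rho^{-2n+1}u'\overline{u}\,d\rho$. The boundary term at $\rho=1$ now has the favorable sign and is simply discarded, after which Cauchy-Schwarz and Young finish the argument with an explicit constant $2/(2n-1)$.

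The sole technical point is justifying the vanishing of the boundary trace at $\rho=0$. For \eqref{Eq:Hardy(1)} and \eqref{Eq:Hardy(2)}, this is immediate from the strict inequalities on $\alpha$ and $\beta$ together with $u\in C^1[0,1]$. For \eqref{Eq:Hardy(3)}, the hypothesis $u^{(j)}(0)=0$ for $j=0,\dots,n-1$ together with $u\in C^n[0,1]$ forces the Taylor expansion $u(\rho)=O(\rho^n)$ near $0$, hence $\rho^{-2n+1}|u(\rho)|^2=O(\rho)\to 0$; this is exactly the amount of vanishing needed to beat the singularity of the weight, and it is the only place where the full vanishing sequence is used. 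No other difficulty is anticipated.
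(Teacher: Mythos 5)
Your proof is correct and follows essentially the same route as the paper: integration by parts against the natural antiderivative of the weight, vanishing of the trace at $\rho=0$ from the hypotheses (Taylor expansion $u(\rho)=O(\rho^n)$ in the third case), and Cauchy--Schwarz/Young to absorb the weighted norm of $u$. The only cosmetic difference is in \eqref{Eq:Hardy(2)}, where the paper applies the fundamental theorem of calculus to $\rho^{\beta+1}u$ and then bounds $L^1$ by $L^2$, while you work with the squared quantity $\rho^{2\beta+2}|u|^2$; both arguments are equally valid.
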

\begin{proof}
	By partial integration, we have that
	\begin{align*}
		\int_0^{1} r^{2\alpha}|u(r)|^2 dr 
		&\lesssim 
		|u(1)|^2 + \int_{0}^{1} r^{2\alpha+1}|u(r)||u'(r)|dr \\
		&\lesssim 
		|u(1)|^2 + \varepsilon \int_0^{1} r^{2\alpha}|u(r)|^2 dr + \frac{1}{4\varepsilon} \int_0^{1} r^{2\alpha+2}|u'(r)|^2 dr.
	\end{align*}
	Then \eqref{Eq:Hardy(1)} follows by taking $\varepsilon$ small enough. For \eqref{Eq:Hardy(2)} we first observe that
	\begin{equation*}
		|u(1)| = \left|\int_{0}^{1}(r^{\beta+1}u(r))'dr \right|
		\lesssim \int_{0}^{1} r^{\beta}|u(r)| dr
		+ \int_{0}^{1} r^{\beta+1}|u'(r)|dr.
	\end{equation*}
	The claim then follows from the fact that the $L^1$ norm is controlled by the $L^2$ norm on bounded intervals. For the third estimate, by partial integration we have
	\begin{align*}
		\int_{0}^{1}\frac{|u(r)|^2}{r^{2n}}dr &= \lim_{a \rightarrow 0^+}\frac{-|u(r)|^2}{(2n-1)r^{2n-1}}\Bigg|^1_a + \frac{2}{2n-1}\int_{0}^{1} \frac{\Re \big(u(r)\overline{u'(r)}\big)}{r^{2n-1}}dr \\	
		&\leq  \frac{2}{2n-1}\left( \varepsilon\int_{0}^{1} \frac{ |u(r)|^2}{r^{2n}}dr + \frac{1}{4\varepsilon}\int_{0}^{1} \frac{ |u'(r)|^2}{r^{2n-2}}dr\right),
	\end{align*}
	and by taking $\varepsilon$ small enough \eqref{Eq:Hardy(3)} follows.
\end{proof}
We now prove two lemmas, which then together yield Proposition \ref{Prop:Equivalence_D}.
\begin{lemma}\label{Lem:Equiv(1)}
	Let $d \in \mathbb{N}$ and $k \in \mathbb{N}_0$. If $ k < \frac{d}{2}$ then
	\begin{equation*} 
		\| u(|\cdot|) \|_{H^k(\B^d)} \simeq \sum_{n=0}^{k} \|(\cdot)^{n+\frac{d-1}{2}-k}u^{(n)} \|_{L^2(0,1)},
	\end{equation*}
	for all $u \in C^{\infty}_e[0,1]$.
\end{lemma}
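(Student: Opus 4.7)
My plan is to prove the two directions of the equivalence separately: the upper bound reduces to a direct chain-rule computation, while the lower bound requires a short downward induction built on Hardy's inequality together with a one-dimensional trace estimate. Write $r=|x|$ and $\omega=x/r$ throughout.

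For the upper bound, the chain rule gives, for every multi-index $\alpha$ with $|\alpha|\geq 1$,
\[
\partial^{\alpha}\bigl(u(|\cdot|)\bigr)(x)=\sum_{n=1}^{|\alpha|}u^{(n)}(r)\,p_{\alpha,n}(x),
\]
where each $p_{\alpha,n}$ is smooth away from the origin and positively homogeneous of degree $n-|\alpha|$, hence $|p_{\alpha,n}(x)|\lesssim r^{n-|\alpha|}$. Squaring, integrating in polar coordinates, and summing over $|\alpha|=0,1,\dots,k$ yields
\[
\|u(|\cdot|)\|_{H^k(\B^d)}^{2}\lesssim\sum_{j=0}^{k}\sum_{n=0}^{j}\|(\cdot)^{n+\frac{d-1}{2}-j}u^{(n)}\|_{L^2(0,1)}^{2}.
\]
Since $r^{n+\frac{d-1}{2}-j}\leq r^{n+\frac{d-1}{2}-k}$ on $(0,1)$ whenever $j\leq k$, this sum is in turn dominated by the right-hand side of the claim.

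For the lower bound I first handle the highest-order terms. Because $u(|\cdot|)$ depends only on $r$, expanding $u(|x+t\omega|)=u(r+t)$ in $t$ gives the pointwise identity
\[
u^{(m)}(r)=\sum_{|\alpha|=m}\binom{m}{\alpha}\omega^{\alpha}\,\partial^{\alpha}\bigl(u(|\cdot|)\bigr)(x),
\]
valid for every $m\geq 0$. Since $|\omega^{\alpha}|\leq 1$, applying Cauchy-Schwarz to this finite sum and integrating in polar coordinates gives
\[
\|(\cdot)^{\frac{d-1}{2}}u^{(m)}\|_{L^2(0,1)}^{2}\lesssim\|u(|\cdot|)\|_{\dot{H}^m(\B^d)}^{2}\leq\|u(|\cdot|)\|_{H^k(\B^d)}^{2},\qquad m=0,1,\dots,k.
\]
In particular, this settles the case $n=k$ of the desired lower bound.

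The remaining indices $n=k-1,k-2,\dots,0$ are dealt with by a downward induction on $n$. Assume the bound for $n+1$. The hypothesis $k<d/2$ forces $n+\frac{d-1}{2}-k>-\tfrac{1}{2}$, so Hardy's inequality \eqref{Eq:Hardy(1)} produces
\[
\|(\cdot)^{n+\frac{d-1}{2}-k}u^{(n)}\|_{L^2(0,1)}\lesssim|u^{(n)}(1)|+\|(\cdot)^{(n+1)+\frac{d-1}{2}-k}u^{(n+1)}\|_{L^2(0,1)}.
\]
The second summand is absorbed by the inductive hypothesis. For the boundary term, the one-dimensional trace inequality on $(1/2,1)$ yields $|u^{(n)}(1)|^2\lesssim\|u^{(n)}\|_{L^2(1/2,1)}^2+\|u^{(n+1)}\|_{L^2(1/2,1)}^2$; since all weights are comparable to $1$ on this interval, each summand is controlled by $\|(\cdot)^{\frac{d-1}{2}}u^{(m)}\|_{L^2(0,1)}^2$ with $m\in\{n,n+1\}\subseteq\{0,\dots,k\}$, already bounded by $\|u(|\cdot|)\|_{H^k(\B^d)}^2$ from the previous step. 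This closes the induction. The only structural constraint is the assumption $k<d/2$, which is precisely what keeps the Hardy exponent admissible, so no genuine obstacle remains.
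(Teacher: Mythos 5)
Your proof is correct and follows essentially the same route as the paper: the upper bound via the chain rule and homogeneity of the coefficient functions (the paper's estimate \eqref{Eq:DerSquared2}), and the lower bound via the radial-direction derivative identity with Cauchy--Schwarz (the paper's \eqref{Eq:DerSquared}) combined with Hardy's inequality \eqref{Eq:Hardy(1)} and control of the boundary values $u^{(i)}(1)$. The only cosmetic differences are that you organize the Hardy step as a downward induction rather than iterating it directly to order $k$, and you control $|u^{(n)}(1)|$ by a one-dimensional trace estimate on $(1/2,1)$ instead of the paper's weighted estimate \eqref{Eq:Hardy(2)}; both variants use the same ingredients and are equally valid.
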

\begin{proof}
	We start with the observation that for $ n \in \mathbb{N}_0 $
	\begin{equation}\label{Eq:DerSquared}
		\sum_{|\alpha|=n} \big| \partial_{x}^{\alpha} u(|x|) \big|^2 \gtrsim \big|u^{(n)}(|x|) \big|^2
	\end{equation}
	uniformly in $x$.
	Indeed, we inductively get 
	\begin{equation}
		u^{(n)}(|x|) = \sum_{|\alpha|=n}\partial_x^\alpha u(|x|) \frac{n!}{\alpha!}\frac{x^{\alpha}}{|x|^n},
	\end{equation}
	and then from the fact that $\sum_{|\alpha|=n} \frac{n!}{\alpha!}\frac{x^{2\alpha}}{|x|^{2n}}=1$ and Cauchy Schwarz we have
	\begin{equation}
		\sum_{|\alpha|=n} \big|\partial_x^\alpha u(|x|)\big|^2 \gtrsim \left( \sum_{|\alpha|=n} \frac{n!}{\alpha!} \big|\partial_x^\alpha u(|x|) \big|^2 \right) \left( \sum_{|\alpha|=n} \frac{n!}{\alpha!}\frac{x^{2\alpha}}{|x|^{2n}} \right) \gtrsim \big|u^{(n)}(|x|)\big|^2.
	\end{equation}
	Now, by Lemma \ref{Lem:Hardy_on_[0,1]} and Eq.~\eqref{Eq:DerSquared} we have that for $ n \leq k$
	\begin{align*}
		\|(\cdot)^{n+\frac{d-1}{2}-k}u^{(n)} \|_{L^2(0,1)}
		&\lesssim
		\sum_{i=n}^{k-1}|u^{(i)}(1)| + \|(\cdot)^{\frac{d-1}{2}}u^{(k)} \|_{L^2(0,1)}
		\lesssim
		\sum_{i=n}^{k}\|(\cdot)^{\frac{d-1}{2}}u^{(i)} \|_{L^2(0,1)}	\\
		& \simeq
		\sum_{i=n}^{k} \| u^{(i)}(|\cdot|) \|_{L^2(\B^d)} 
		\lesssim
		\sum_{i=n}^{k}\|\sum_{|\alpha|=i}|\partial^\alpha u(|\cdot|)| \|_{L^2(\B^d)} \\
		&\lesssim
		\| u(|\cdot|) \|_{H^k(\B^d)}.
	\end{align*}
	To obtain the reverse estimate we use the fact that for $n \geq 1$
	\begin{equation}\label{Eq:DerSquared2}
		\sum_{|\alpha|=n} \big|\partial_x^\alpha u(|x|) \big|^2 \lesssim \sum_{j=1}^{n} |x|^{2(j-n)} \big|u^{(j)}(|x|) \big|^2.
	\end{equation}
	Indeed, we first inductively obtain the identity
	\begin{equation*}
		\sum_{|\alpha|=n}\partial_x^\alpha u(|x|) = \sum_{j=1}^{n} u^{(j)}(|x|)\frac{P_j(x)}{|x|^{2n-j}},
	\end{equation*}
	where $P_j$ are certain homogeneous polynomials of degree $n$. Then Eq.~\eqref{Eq:DerSquared2} follows from this and the fact that
	$
	|P_j(x)| \lesssim |x|^n.
	$
	Now, from Eq.~\eqref{Eq:DerSquared2} for $0 \leq n \leq k$ we have 
	\begin{align*}
		\| u(|\cdot|) \|_{\dot{H}^n(\B^{d})} &=\sum_{|\alpha|=n}\| \partial^{\alpha}u(|\cdot|) \|^2_{L^2(\B^d)}  
		\lesssim 
		\sum_{j=0}^{n}\| (\cdot)^{\frac{d-1}{2}+j-n}u^{(j)} \|^2_{L^2(0,1)}\\
		& \leq  \sum_{j=0}^{n}\| (\cdot)^{\frac{d-1}{2}+j-k}u^{(j)} \|^2_{L^2(0,1)},
	\end{align*}
	and by summing over $n$ we get the desired estimate.	
\end{proof}
\begin{lemma}\label{Lem:EquivD}
	For odd $d$ we have that
	\begin{equation}
		\| D_{d+2} u \|_{\dot{H}^1(0,1)} \simeq \sum_{n=0}^{\frac{d+1}{2}} \|(\cdot)^{n}u^{(n)} \|_{L^2(0,1)} 
	\end{equation}
	and
	\begin{equation}
		\| D_{d+2} u \|_{L^2(0,1)} \simeq \sum_{n=0}^{\frac{d-1}{2}} \|(\cdot)^{n+1}u^{(n)} \|_{L^2(0,1)}
	\end{equation}
	for all $u \in C^{\infty}_{e}[0,1]$.
\end{lemma}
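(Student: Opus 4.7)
The plan is to first derive an explicit polynomial-in-$\rho$ representation for $D_{d+2}u$ and then use it in both directions of the equivalence. With $m := (d-1)/2$, I would prove by induction on $m$ the identity
\[
D_{d+2}u(\rho) \;=\; \sum_{n=0}^{m} c_{m,n}\,\rho^{n+1}\,u^{(n)}(\rho), \qquad c_{m,n} > 0.
\]
The cleanest way to see this is to pass to the variable $s = \rho^2/2$, in which $\mc D = \partial_s$: writing $u(\rho) = p(s)$ with $p$ smooth on $[0,1/2]$, one has $\rho^{d}\,u = \rho\,(2s)^m p(s)$. The auxiliary operator $T[f](s) := f'(s) + f(s)/(2s)$, defined on functions with $f(0) = 0$, satisfies $\mc D(\rho f(s)) = \rho\, T[f](s)$, and an induction yields $T^m[(2s)^m p] = \sum_{l=0}^{m} \binom{m}{l} \frac{(2m+1)!!}{(2l+1)!!}(2s)^l p^{(l)}$; re-expressing $(2s)^l p^{(l)} = \rho^{2l}\mc D^l u$ in terms of $\rho$-derivatives of $u$ gives the claimed expansion with positive $c_{m,n}$.

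With this expansion in hand, the $\lesssim$ direction in both equivalences is immediate from the triangle inequality. For the $L^2$ case one simply has $\|D_{d+2}u\|_{L^2} \le \sum_{n=0}^{m} c_{m,n}\|\rho^{n+1}u^{(n)}\|_{L^2}$; for the $\dot H^1$ case, differentiating the expansion gives $(D_{d+2}u)' = \sum_{n=0}^{m} c_{m,n}\bigl[(n{+}1)\rho^n u^{(n)} + \rho^{n+1} u^{(n+1)}\bigr]$, and summing $n$ up to $k_d = m+1$ yields the upper bound.

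The reverse bounds are the heart of the matter, since individual weighted norms could \emph{a priori} cancel in the sum defining $D_{d+2}u$. Rather than trying to extract them from the expansion, I would exploit the explicit inverse $K_{d+2}F = \rho^{-d}\mc K^m F$ from Lemma \ref{Lem:D_d_K_d} and prove the sub-claim
\[
\|\rho^{n+1}(K_{d+2}F)^{(n)}\|_{L^2(0,1)} \;\lesssim\; \|F\|_{L^2(0,1)}, \qquad n = 0,\dots,m,
\]
for all $F \in C_o^\infty[0,1]$, together with its $\dot H^1$-counterpart $\|\rho^n(K_{d+2}F)^{(n)}\|_{L^2} \lesssim \|F'\|_{L^2} + |F(1)|$ for $n \le k_d$. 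Setting $F = D_{d+2}u$ and $u = K_{d+2}F$ then yields the desired lower bounds. To prove the sub-claims, apply Leibniz to $(\rho^{-d}\mc K^m F)^{(n)}$ and use $(\mc K g)' = \rho g$ to write each resulting term as a combination of expressions $\rho^{\alpha}(\mc K^j F)^{(\beta)}$. Because $F$ is odd and smooth, $\mc K^j F$ vanishes to order $2j+1$ at $\rho = 0$, so the apparent negative-power weights are controlled by Hardy's inequality, Lemma \ref{Lem:Hardy_on_[0,1]}(3), together with parts (1)--(2) for the boundary contribution at $\rho = 1$. The computation is cleanest again in the variable $s = \rho^2/2$, where $\mc K$ acts as plain integration against $\sqrt{2s}$ and the estimate collapses to a one-dimensional Hardy inequality on $(0,1/2)$. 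The main technical obstacle is the Leibniz bookkeeping and verifying that all the apparently singular terms are in fact integrable; the positivity of the $c_{m,n}$ from the first step is suggestive but logically unnecessary, since going through the inverse operator circumvents the cancellation issue entirely.
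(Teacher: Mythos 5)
Your proposal is correct and follows essentially the same route as the paper's proof: the upper bounds come from the explicit expansion of $D_{d+2}u$ and $(D_{d+2}u)'$ as combinations of weighted derivatives of $u$, and the lower bounds are obtained by passing through the inverse $K_{d+2}$ and controlling the resulting terms $\rho^{-2j}\mathcal{K}^j w$ (and $\rho^{-2j-1}\mathcal{K}^j w$, plus $w'$ for the top order) via the Hardy inequality \eqref{Eq:Hardy(3)}, using the high-order vanishing of $\mathcal{K}^j w$ at the origin. The change of variable $s=\rho^2/2$ and the extra $|F(1)|$ term in your sub-claim (harmless, since $F(0)=0$ gives $|F(1)|\lesssim \|F'\|_{L^2(0,1)}$) are only cosmetic deviations.
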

\begin{proof}
	One direction is straightforward. Namely, the $``\lesssim "$ estimates directly follow from the observation that 
	\[
	D_{d+2}u(\rho)=\sum_{n=0}^{\frac{d-1}{2}}a_n \rho^{n+1}u^{(n)}(\rho) \quad \text{and} \quad  (D_{d+2}u)'(\rho)=\sum_{n=0}^{\frac{d+1}{2}}b_n \rho^{n}u^{(n)}(\rho)
	\]
	for some positive $a_n,b_n$. For the reverse estimates, we do the following. First, let $w:=D_{d+2}u$. Then we have that 
	\begin{equation}\label{Eq:n->K}
		\rho^{n+1}u^{(n)}(\rho)=\rho^{n+1}(K_{d+2}w)^{(n)}(\rho) = \sum_{j=(d-1)/2-n}^{(d-1)/2}\alpha_{n,j}\rho^{-2j}\mc K^jw(\rho),
	\end{equation}
	for some real $\alpha_{n,j}$.
	Now, by repeated application of Lemma \ref{Lem:Hardy_on_[0,1]} (Eq.~\eqref{Eq:Hardy(3)} in particular) we get that for $n = 0,1,\dots, (d-1)/2$
	\begin{equation*}
		\| (\cdot)^{n+1}u^{(n)} \|_{L^2(0,1)} \lesssim \| w \|_{L^2(0,1)} = \| D_{d+2} u \|_{L^2(0,1)}.
	\end{equation*}
	In the same manner, based on Eq.~\eqref{Eq:n->K}  we obtain
	\begin{equation*}
		\| (\cdot)^{n}u^{(n)} \|_{L^2(0,1)} \lesssim \| \rho^{-1} w\|_{L^2(0,1)} \lesssim \| w' \|_{L^2(0,1)} = \|  D_{d+2} u \|_{\dot{H}^1(0,1)},
	\end{equation*}
	for $n = 0,1,\dots, (d-1)/2$. The case $n=k_d$ has to be treated separately. Namely, then we have
	\begin{equation*}
		\rho^{k_d}u^{(k_d)}(\rho)= \sum_{j=0}^{k_d-1}\beta_{j}\rho^{-2j-1}\mc K^jw(\rho) + w'(\rho),
	\end{equation*}
	for some real $\beta_{j}$. And again by repeated application of Lemma \ref{Lem:Hardy_on_[0,1]} we get
	\begin{equation*}
		\| (\cdot)^{k_d}u^{(k_d)} \|_{L^2(0,1)} \lesssim \| \rho^{-1} w\|_{L^2(0,1)}  + \| w' \|_{L^2(0,1)} \lesssim \| w' \|_{L^2(0,1)} = \|  D_{d+2} u \|_{\dot{H}^1(0,1)}.
	\end{equation*}
	This finishes the proof.
\end{proof}
Finally, Proposition \ref{Prop:Equivalence_D} follows from Lemma \ref{Lem:EquivD} and Lemma \ref{Lem:Equiv(1)} in which we put $d+2$ instead of $d$, and successively let $k=(d+1)/2$ and $k=(d-1)/2$.

\section{Stability analysis}\label{Sec:Stability_analysis}

\noindent Now we turn to studying dynamics near the static profile 
\begin{equation*}
	\Psi_{\text{st}}(\rho)
	:=
	\begin{pmatrix}
	\alpha(d)\big(\rho^2+\beta(d)\big)^{-1} \vspace{1mm}\\
	2\alpha(d)\beta(d)\big(\rho^2+\beta(d)\big)^{-2}
	\end{pmatrix}
	=
	\begin{pmatrix}
	(T-t)^2u_T(t,r) \vspace{2mm}\\
	(T-t)^3\partial_tu_T(t,r)
	\end{pmatrix};
\end{equation*}
here $t=T(1-e^{-\tau})$ and $r=T\rho e^{-\tau}$.
 Namely, we consider perturbations
\begin{equation}\label{Def:Pert_ansatz}
	\Psi(\tau)=\Psi_{\text{st}}+\Phi(\tau),
\end{equation}
where
$\Phi(\tau):=(\phi_1(\tau,\cdot),\phi_2(\tau,\cdot))$.
The well-posedness theory from Sec.~\ref{Subsec:Well-posedness} guaranties local existence of strong solutions for any $\Phi(0) \in \mc H$. However, for studying stability of $\Psi_{\text{st}}$ under small perturbations by $\Phi(0)$ (and showing asymptotic stability in particular), it is more convenient to represent solutions in terms of the semigroup associated with the evolution of $\Phi$. To this end, we substitute the ansatz \eqref{Def:Pert_ansatz} into Eq.~\eqref{Eq:YM_system_abstract} and thereby obtain the following initial value problem
\begin{equation}\label{Eq:YM_system_pert}
\begin{cases}
\Phi'(\tau)=\widetilde{\mb L}\Phi(\tau)+\mb N(\Phi(\tau))\\	
\Phi(0)=\mb U(\mb v,T).
\end{cases}
\end{equation}
Here $\widetilde{\mb L}:=\widetilde{\mb L}_0+\mb{L}'$ for
\begin{gather}
{\bf{L}'}\mb u:=
\begin{pmatrix}
0 \\
V_du_1
\end{pmatrix},\label{Def:L'}
\quad \text{and} \quad
{\bf{N}}(\mb u):= 
\begin{pmatrix}
0 \\
N_d(u_1)
\end{pmatrix}, 
 \end{gather}
where
\begin{equation}\label{Def:V}
	V_d(\rho):=\frac{3(d-2)\alpha(d)\Big(2\beta(d)-\big(\alpha(d)-2\big)\rho^2\Big)}{(\rho^2+\beta(d))^2},
\end{equation}
and
\begin{equation}\label{Def:Nonlin_N}
	N_d(u_1)(\rho):=(d-2)u_1(\rho)^2 \left( \frac{3(\alpha(d)-1)\rho^2-3\beta(d)}{\rho^2+\beta(d)}+\rho^2u_1(\rho) \right).
\end{equation}
Furthermore, the initial data is
\begin{equation*}
	\mb U (\mb v,T)= \mb U_0(T)- \Psi_{\text{st}}=
	\begin{pmatrix}
		T^2u_0(T\cdot)-u_1(0,\cdot) \\
		T^3 u_1(T\cdot) - \partial_0 u_1(0,\cdot)
	\end{pmatrix}
	=
	\begin{pmatrix}
		T^2u_0(T\cdot) - u_0 \\
		T^3 u_1(T\cdot) - u_1
	\end{pmatrix}
	+
	\mb v,
\end{equation*}
where, for convenience, we denoted
\begin{equation}\label{Def:InitCond_v}
	\mb v := 
	\begin{pmatrix}
		u_0 - u_1(0,\cdot) \\
		u_1 - \partial_0 u_1(0,\cdot)
	\end{pmatrix}.
\end{equation}
Note that the operator $\mb L' : \mc H \rightarrow \mc H$ is bounded (in fact it is compact), and therefore the operator $\widetilde{\mb L}$ is closable with its closure being 
\begin{equation*}
	\mb L := \mb L_0 + \mb L', \quad \text{with} \quad \mc D(\mb L)=\mc D(\mb L_0).
\end{equation*} 
Now, as a direct consequence of the bounded perturbation theorem (\cite{EngNag00}, p.~158, Theorem 1.3), we obtain the following proposition.

\begin{proposition}
	The operator ${\bf{L}}:\mathcal{D}({\bf{L}}) \subseteq \mathcal{H} \rightarrow \mathcal{H}$ generates a strongly continuous one-parameter semigroup $(\textbf{\emph{S}}(\tau))_{\tau \geq 0}$ of bounded operators on $\mathcal{H}$. Furthermore, we have that
	\begin{equation}\label{Eq:Growth_Est_S}
		\| \textbf{\emph{S}}(\tau) \mb{\emph{u}}\|_{\mathcal{H}} \leq M e^{(-\frac{3}{2}+ M\| \bf L' \|)\tau}\| \mb{\emph{u}} \|_{\mc H}
	\end{equation}
	for $\tau \geq 0$ and  $\mb{\emph{u}} \in \mc H$.
\end{proposition}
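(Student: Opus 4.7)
The plan is to reduce this to a direct application of the bounded perturbation theorem for $C_0$-semigroups, using Proposition \ref{Prop:free_semigroup} as the unperturbed input; the passage preceding the statement essentially tells us to do exactly this, so I expect a short proof whose only real content is the verification of the hypotheses.

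First I would check that $\mb L' : \mc H \to \mc H$ is a bounded linear operator on the entire space. From \eqref{Def:L'}--\eqref{Def:V}, one has $\mb L' \mb u = (0, V_d u_1)$, where $V_d$ is a rational function whose denominator $(\rho^2 + \beta(d))^2$ is bounded below uniformly on $[0,1]$ and which depends on $\rho$ only through $\rho^2$. Hence $V_d(|\cdot|)$ is the restriction to $\overline{\B^{d+2}}$ of a smooth radial function on $\R^{d+2}$, so multiplication by $V_d$ is bounded on $H^m(\B^{d+2})$ for every $m \in \mathbb{N}_0$. In particular $u_1 \mapsto V_d u_1$ maps $H^{k_d}_r(\B^{d+2})$ continuously into $H^{k_d-1}_r(\B^{d+2})$, and consequently $\mb L' \in \mc B(\mc H)$.

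Since $\mb L_0$ is closed (being the closure of $\widetilde{\mb L}_0$ from Proposition \ref{Prop:free_semigroup}) and $\mb L'$ is everywhere defined and bounded, the sum $\mb L = \mb L_0 + \mb L'$ with $\mc D(\mb L) = \mc D(\mb L_0)$ is again closed and densely defined. I would then invoke the bounded perturbation theorem: given that $\mb L_0$ generates $(\mb S_0(\tau))_{\tau \geq 0}$ with $\|\mb S_0(\tau)\|_{\mc H} \leq M e^{-3\tau/2}$ and that $\mb L' \in \mc B(\mc H)$, the operator $\mb L_0 + \mb L'$ generates a strongly continuous semigroup $(\mb S(\tau))_{\tau \geq 0}$ on $\mc H$ with
\begin{equation*}
\|\mb S(\tau) \mb u\|_{\mc H} \leq M e^{(-3/2 + M\|\mb L'\|)\tau} \|\mb u\|_{\mc H},
\end{equation*}
which is precisely the claim.

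I do not see a genuine obstacle: the statement is a textbook corollary once boundedness of $\mb L'$ is in hand, and the latter is essentially algebraic. The real difficulty is deferred, since the exponent $-3/2 + M\|\mb L'\|$ may well be positive, so this bound gives no decay on its own. Recovering decay on a stable subspace is exactly what the spectral analysis of $\mb L$ and the compactly perturbed semigroups theorem announced in the introduction are designed to provide.
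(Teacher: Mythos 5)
Your proposal is correct and follows exactly the paper's route: the paper obtains this proposition as a direct consequence of the bounded perturbation theorem (citing Engel--Nagel), having noted that $\mb L'$ is bounded (indeed compact, since $V_d \in C^\infty_e[0,1]$) and that $\mb L_0$ generates $\mb S_0(\tau)$ with the bound of Proposition \ref{Prop:free_semigroup}. Your verification of the boundedness of $\mb L'$ and your closing remark that the resulting growth bound is too crude and must later be refined by spectral analysis also match the paper's own discussion.
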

\noindent With this result at hand, we express the system ~\eqref{Eq:YM_system_pert} in the integral form 
\begin{equation}\label{Eq:Duhamel(1)}
	\Phi(\tau)=\mb S(\tau)\mb U(\mb v,T) + \int_{0}^{\tau}\mb S(\tau-s)\mb N(\Phi(s))ds.
\end{equation}
 The central question to stability analysis of $\Psi_{\text{st}}$ is the one of growth properties of $ (\mb S(\tau))_{\tau \geq 0}$. The estimates obtained by the bounded perturbation theorem are typically too crude to be useful. Namely, \eqref{Eq:Growth_Est_S} implies that the growth bound of $ (\mb S(\tau))_{\tau \geq 0}$ is at most $-\frac{3}{2}+ M\| \bf L' \|$, which is an overshoot. By Hadamard's formula, on the other hand, we know that the growth bound is given in terms of the spectral radius of $\mb S(\tau)$. However, analyzing the spectrum of semigroups is in general quite difficult.
 
  In some cases, where there are nice mapping relations between the spectra of a semigroup and its generator, one can shift the analysis to the generator. This is, in fact, the case with our problem. We demonstrate this by proving an abstract spectral mapping theorem for a large class of semigroups, which in particular covers $(\mb S(\tau))_{\tau \geq 0}$, see Appendix \ref{App:Semigroup}. Namely, roughly speaking, a compact perturbation $\mb L'$ adds to the right of the spectrum of $\mb L_0$ eigenvalues only, which are furthermore in correspondence with the point spectrum of $\mb S(\tau)$ that lies outside of $\sigma(\mb S_0(\tau))$. This ultimately reduces matters to the study of the unstable point spectrum of $\mb L$, i.e., the one belonging to $\Hb:=\{ \la \in \C : \Re \la \geq 0 \}$, see Theorem \ref{Thm:Semigroups} for the precise statement.

 This simplification is, however, in general not as hopeful as it might seem at the first sight. Namely, the operator $\mb L$ is highly non-self-adjoint, so traditional (self-adjoint) spectral techniques are essentially of no use here, and hence analyzing the spectrum of such operators is in general a notoriously difficult problem. Nevertheless, in our particular setting, where the potential $V_d$ is of the rational function form, we can employ the spectral techniques we have been devising lately, see \cite{CosDonGloHua16,CosDonGlo17,ChaDonGlo17,Glo18,GloSch21}, to prove that the unstable spectrum of $\mb L$ consists only of one eigenvalue $\la=1$. This instability, however, is not a genuine one, as it is an artifact of the underlying time translation symmetry; consequently $\Psi_{\text{st}}$ is spectrally stable. This result is established in the following two sections, and it represents the core of our stability argument.
\subsection{Spectral analysis of $\mb L$}

 Eq.~\eqref{Eq:Growth_S_0} implies that the growth bound of $(\mb S_0(\tau))_{\tau \geq 0}$ is at most $-\frac{3}{2}$, and this, according to Hadamard's formula (see \cite{EngNag00}, p.~251, Proposition 2.2), gives 
$$r(\mb S_0(\tau))=e^{-\frac{3}{2}\tau},$$
which in turn, based on the spectral inclusion law
\begin{equation*}
	\{ e^{\tau \la}:\la \in \sigma(\mb L_0) \} \subseteq \sigma(\mb S_0(\tau)) ,
\end{equation*}
implies that
\begin{equation}\label{Eq:Spec_L_0}
	\sigma(\mb L_0) \subseteq \{ \la \in \mathbb{C} : \Re \lambda \leq -\tfrac{3}{2} \}.
\end{equation}
Furthermore, the facts that $V \in C_e^\infty[0,1]$ and that $H_r^{k_d}(\B^{d+2})$ is compactly embedded in $H_r^{k_d-1}(\B^{d+2})$ imply that the operator $\mb L'$ is compact. Therefore, perturbing $\mb L_0$ by $\mb L'$ introduces only eigenvalues into the half-plane $\{ \la \in \mathbb{C} : \Re \lambda > -\frac{3}{2} \}.$
In particular, the unstable spectrum of $\mb L$ consists of finitely many eigenvalues with finite multiplicity, see Theorem \ref{Thm:Semigroups}, i). The central result of this section is that, apart from the symmetry-induced eigenvalue $\la=1$, there are no other unstable spectral points of $\mb L$.
\begin{proposition}\label{Prop:unstable_spectrum}
	For every odd $d \geq 5$ we have that 
	\begin{equation*}
		\sigma_p({\bf L}) \cap \{ \la \in \mathbb{C} : \Re \la \geq 0  \} = \{ 1 \}.
	\end{equation*}
\end{proposition}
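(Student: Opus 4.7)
My plan is to convert the eigenvalue equation $(\lambda - \mathbf{L})\mathbf{u} = 0$ into a scalar second-order ODE for $u_1$ and then classify its admissible solutions. Solving the first row of the system for $u_2 = (\lambda + 2) u_1 + \rho u_1'$ and substituting into the second row yields the Fuchsian equation
\begin{equation*}
(1 - \rho^2)\, u_1''(\rho) + \Big[\tfrac{d+1}{\rho} - (2\lambda + 6)\rho\Big] u_1'(\rho) + \big[V_d(\rho) - (\lambda + 2)(\lambda + 3)\big] u_1(\rho) = 0
\end{equation*}
on $(0,1)$, with regular singular points at $\rho = 0$, $\rho = \pm 1$, and the complex pair $\rho = \pm i\sqrt{\beta(d)}$ inherited from $V_d$; it is thus of Heun type.

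Next I would match the admissibility of solutions to membership in the domain of $\mathbf{L}$. Since elements of $\mathcal{H}$ correspond to radial Sobolev functions on $\mathbb{B}^{d+2}$, the Frobenius indices at $\rho = 0$ are $0$ and $-d$, so only the index-$0$ branch is admissible. At the endpoint $\rho = 1$, the indices compute to $0$ and $\tfrac{d-3}{2} - \lambda$, and the norm equivalence of Proposition~\ref{Prop:Equivalence_D} together with the Sobolev order $k_d$ rule out the second branch whenever $\Re\lambda \geq 0$. Consequently, $\lambda \in \Hb$ is an eigenvalue of $\mathbf{L}$ precisely when the ODE above admits a nontrivial solution that is analytic on the whole closed interval $[0,1]$.

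The value $\lambda = 1$ is such an eigenvalue for symmetry reasons: the one-parameter family $\{u_T\}_{T>0}$, generated by time translation of the blowup profile, yields upon differentiation in $T$ at $T = 1$ and re-expression in similarity variables an eigenfunction of $\mathbf{L}$ of the explicit form $g_1(\rho) = (a + b\rho^2)/(\rho^2 + \beta(d))^2$, with constants $a, b$ depending only on $\alpha(d)$ and $\beta(d)$; I would verify this by a direct substitution.

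The main obstacle, and essentially the entire content of the proposition, is to exclude every other $\lambda \in \Hb$. For this I would follow the \emph{quasi-solvable} strategy of \cite{CosDonGloHua16, CosDonGlo17}, in the simplified form announced in Remark~\ref{Rem:Quasi}: use the explicit $\lambda = 1$ solution, viewed as an analytic deformation in $\lambda$, to carry out a reduction of order, then apply a Liouville-type substitution that recasts the problem as a first-order Riccati equation whose coefficients have a definite sign on $(0,1)$ when $\Re\lambda \geq 0$ and $\lambda \neq 1$. Matching this sign structure against the Frobenius asymptotics at both endpoints would preclude a solution that is simultaneously analytic at $\rho = 0$ and $\rho = 1$ for any $\lambda \in \Hb \setminus \{1\}$. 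The delicate point is rendering these monotonicity and growth estimates uniform both in the odd dimension $d \geq 5$ and in $\lambda$ across the whole closed right half-plane, most notably near the imaginary axis, where the sign structure becomes marginal.
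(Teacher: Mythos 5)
Your setup follows the paper: solving the first row for $u_2=(\la+2)u_1+\rho u_1'$ and substituting gives exactly the paper's eigenvalue ODE, the Frobenius indices $\{0,-d\}$ at $\rho=0$ and $\{0,\tfrac{d-3}{2}-\la\}$ at $\rho=1$ reduce the problem to excluding solutions analytic on $[0,1]$, and the symmetry mode at $\la=1$ is correctly identified (in fact the similarity-variable derivative in $T$ gives $g_1(\rho)=(\rho^2+\beta(d))^{-2}$, i.e.\ your constant $b$ vanishes; a direct substitution confirms it). Minor quibble: in the $\rho$ variable the equation has singular points at $0,\pm1,\pm i\sqrt{\beta(d)},\infty$, so it only becomes a genuine Heun equation after the substitution $x=\rho^2$.

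The genuine gap is in the heart of the matter: the exclusion of every $\la\in\Hb\setminus\{1\}$, i.e.\ mode stability uniformly in $d$. You invoke the quasi-solution strategy of \cite{CosDonGloHua16,CosDonGlo17}, but what you then describe -- reduction of order by the $\la=1$ solution followed by a Riccati substitution whose coefficients ``have a definite sign on $(0,1)$'' for $\Re\la\geq 0$ -- is not that strategy and does not close for complex $\la$: the solutions and coefficients are genuinely complex-valued off the real axis, so no sign or monotonicity structure is available, and you yourself concede the argument ``becomes marginal'' near the imaginary axis, which is precisely where it breaks down. The paper's actual route is quantitative and discrete rather than qualitative: after the supersymmetric removal of $\la=1$ and the reduction to Heun form in $x=\rho^2$, one studies the three-term recurrence for the Taylor coefficients $a_n(d,\la)$ of the local Heun function at $x=0$; by Poincar\'e's theorem the ratio $r_n=a_{n+1}/a_n$ tends either to $1$ or to $-\beta(d)^{-1}$, and an eigenvalue exists precisely when the radius of convergence exceeds $1$. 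The proof then constructs an explicit quasi-solution $\tilde r_n(d,\la)$, shows $|r_n/\tilde r_n-1|\leq\tfrac12$ by induction, and verifies the needed bounds on $\varepsilon_n$, $C_n$, $\delta_1$ uniformly in $\la\in\Hb$ via the Phragm\'en--Lindel\"of principle on the imaginary axis, with uniformity in $d$ achieved by the reparametrization $d=d(w)$ that turns $\beta(d)$ into an element of $\mathbb{Z}[w,\sqrt3]$ so that everything reduces to manifest positivity of polynomial coefficients. None of this quantitative content -- the quasi-solution, the polynomial positivity estimates, the Phragm\'en--Lindel\"of step, the handling of the irrational $\beta(d)$ and of the low case $d=6$/$d\geq7$ split -- appears in your sketch, so the proposal as written does not prove the proposition.
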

\begin{proof}
	If   
$		(\la- \mb L)\mb u=0   $
	for some $\la \in \mathbb{C}$ and $\mb u = (u_1,u_2) \in \mc D(\mb L)$, then by a straightforward calculation we get that the first component $u_1$ satisfies the following ODE 
	\begin{equation}\label{Eq:Eigenv}
		(1-\rho^2)u_1''(\rho)+\left(\frac{d+1}{\rho}-2(\la+3)\rho \right)u_1'(\rho)-(\la+2)(\la+3)u_1(\rho)+V_d(\rho)u_1(\rho)=0
	\end{equation}
weakly on the interval $(0,1)$. Since, by assumption, $u_1 \in H^{k_d}_r(\B^{d+2})$, we have that $u_1 \in H^{k_d}(\delta,1)$ for any $\delta >0$, and by Sobolev embedding it follows that $u_1 \in C^{k_d-1}(0,1)$. Consequently, $u_1$ satisfies Eq.~\eqref{Eq:Eigenv} classically. Furthermore, due to the smoothness of coefficients of the ODE \eqref{Eq:Eigenv} we have that $u_1 \in C^\infty(0,1)$. Now, if $ \la \in \Hb$ then we claim that in fact $u_1 \in C^\infty[0,1]$. Since both $\rho=0$ and $\rho=1$ are regular singular points of Eq.~\eqref{Eq:Eigenv}, to prove the claim we use the Frobenius method. 
The Frobenius indices at $\rho=0$ are $s_1=0$ and $s_2=-d$. Therefore, there are two linearly independent solutions to Eq.~\eqref{Eq:Eigenv} on the interval $(0,1)$, with the power series expansions near $\rho=0$ of the following forms
  \begin{equation}\label{Def:Frobenius_at_0}
	u_{1,1}(\rho)=\sum_{n=0}^{\infty}a_n\rho^n \quad \text{and} \quad u_{1,2}(\rho)=C\log(\rho)u_{1,1}(\rho)+\rho^{-d}\sum_{n=0}^{\infty}b_n\rho^n,
\end{equation}
for $a_0=b_0=1$ and some $C \in \C$. Since $u_{1,1}$ is analytic at $\rho=0$ and $u_{1,2}$ does not belong to $H_r^{k_d}(\B^{d+2})$ (due to the strong singularity at the origin), we conclude that $u_1$ is a constant multiple of $u_{1,1}$ and therefore $u_1 \in C^\infty[0,1)$. Now we treat the other endpoint. Since the Frobenius indices at $\rho=1$ are $s_1=0$ and $s_2=\frac{d-3}{2}-\la$, we distinguish two cases. First, we assume that
\begin{equation}\label{Eq:finite_set_la}
	\la \in \{ 0,1,\dots,\tfrac{d-3}{2} \}.
\end{equation}  
Then the Frobenius method guarantees existence of two linearly independent solutions of Eq.~\eqref{Eq:Eigenv} on $(0,1)$, which have the following power series expansions centered at $\rho=1$
  \begin{equation*}
	u_{1,1}(\rho)=(1-\rho)^{\frac{d-3}{2}-\la} \sum_{n=0}^{\infty}a_n(1-\rho)^n \quad \text{and} \quad u_{1,2}(\rho)=C\log(1-\rho)u_{1,1}(\rho)+\sum_{n=0}^{\infty}b_n(1-\rho)^n,
\end{equation*}
for $a_0=b_0=1$ and some $C\in \R$. If $C=0$ then both $u_{1,1}$ and $u_{1,2}$ are analytic near $\rho=1$ and so must be $u_1$. If $C \neq 0$ then $u_{1,2} \notin H^{k_d}_r(\B^{d+2})$, and $u_1$ must therefore be a constant multiple of $u_{1,1}$, which is analytic at $\rho=1$. Hence $u_1 \in C^\infty[0,1]$.
Now, if \eqref{Eq:finite_set_la} does not hold, then the two solutions are
  \begin{equation*}
	u_{1,1}(\rho)=(1-\rho)^{\frac{d-3}{2}-\la} \sum_{n=0}^{\infty}a_n(1-\rho)^n \quad \text{and} \quad u_{1,2}(\rho)=\sum_{n=0}^{\infty}b_n(1-\rho)^n,
\end{equation*}
with $a_0=b_0=1$. Since $u_{1,2}$ is analytic at $\rho=1$ and $u_{1,1} \notin H^{k_d}_r(\B^{d+2})$, we infer that $u_1$ must be a constant multiple of $u_{1,2}$. Therefore $u_1 \in C^\infty[0,1]$.

For the rest of the proof we adopt the terminology from \cite{CosDonGloHua16}. Namely, if for some $\la \in \Hb $, Eq.~\eqref{Eq:Eigenv} admits a non-zero solution $u_1 \in C^\infty[0,1]$ then we call this solution an \emph{unstable mode} and the corresponding $\la$ an \emph{unstable eigenvalue} of Eq.~\eqref{Eq:Eigenv}. As a matter of fact there is an unstable mode
\begin{equation}\label{Def:g}
	g_1(\rho):=\frac{1}{(\rho^2+ \beta(d))^2},
\end{equation}
which corresponds to $\la=1$. Furthermore, by letting $g_2(\rho):=\rho g_1'(\rho)+3g_1(\rho)$ we get the function $\mb g :=(g_1,g_2) \in \mc D(\mb L)$ for which $	(1- \mb L)\mb g=0$, and hence $1 \in \sigma_p(\mb L) $. 

Also, by following the terminology from \cite{CosDonGloHua16}, we say that $u_T$ is \emph{mode stable} if $\la=1$ is the only unstable eigenvalue of Eq.~\eqref{Eq:Eigenv}. Therefore, to finish the proof, it is enough to show mode stability of $u_T$ for odd $d \geq 5$.
Also, note that the notion of mode stability does not depend on the existence of  well-posedness theory for the underlying Cauchy problem. Therefore, in the rest of the proof we assume no parity restriction on $d$ and we establish the following claim.

\smallskip

\emph{Claim:} The solution $u_T$ is mode stable in all dimensions $d \geq 5$.
\smallskip

\emph{Proof:} For $d=5$, this claim was proved in \cite{CosDonGloHua16} (see Theorem 1.2), and we extend it here to all higher dimensions. However, we emphasize that, as opposed to \cite{CosDonGloHua16}, the presence of the additional free parameter $d$ complicates matters non-trivially. For that reason, we follow the approach in \cite{CosDonGlo17}, where we proved an analogous claim in the setting of supercritical wave maps. What is more, we at the same time improve the method from \cite{CosDonGlo17}, we simplify it and we make it more transparent and robust.

The first step is to ``remove" the unstable eigenvalue $\la=1$ by an adaptation of the well-known procedure from supersymmetric quantum mechanics. For a detailed discussion of the procedure, and for a step-by-step adaptation see \cite{CosDonGlo17}, Sec.~3. First of all, by a change of variable 
$
u_1(\rho)=\rho^{-2}v(\rho)
$
we obtain the canonical form of Eq.~\eqref{Eq:Eigenv} (see Eq.~2.7 in \cite{CosDonGlo17})
\begin{equation}\label{Eq:Eigen_canon}
	(1-\rho^2)v''(\rho)+\left(\frac{d-3}{\rho}-2(\la+1)\rho \right)v'(\rho)-\la(\la+1)v(\rho)+\left(V_d(\rho)-\frac{2d-4}{\rho^2} \right)v(\rho)=0.
\end{equation}
Now, by following the procedure in \cite{CosDonGlo17}, Sec.~3.2, we derive the co-called supersymmetric problem
\begin{equation}\label{Eq:SUSY}
	(1-\rho^2)\tilde{v}''(\rho)+\left(\frac{d-3}{\rho}-2(\la+1)\rho \right)\tilde{v}'(\rho)-\la(\la+1)\tilde{v}(\rho)+\left(\tilde{V}_d(\rho)+2\right)\tilde{v}(\rho)=0,
\end{equation}
where
\begin{equation*}
	\tilde{V}_d(\rho):=-\frac{\big(4\beta(d)-d+5\big)\rho^4 - 2\beta(d)\big(2\beta(d)-d+3\big)\rho^2 +3\beta(d)^2(d-1)}{\rho^2(\rho^2+\beta(d))^2}.
\end{equation*}
If $v_\lambda$ is an unstable mode of Eq.~\eqref{Eq:Eigen_canon} that corresponds to eigenvalue $\la \neq 1$, then 
\begin{equation}\label{Eq:SUSY_map}
	\tilde{v}_\la(\rho):=\rho^{-m}(1-\rho^2)^{-\frac{\la-1-m}{2}}	\Big(\partial_\rho - h(\rho)\Big)\rho^{m}(1-\rho^2)^{\frac{\la+1-m}{2}}v_\la(\rho)
\end{equation}
is the corresponding unstable mode for Eq.~\eqref{Eq:SUSY}; here $m=\frac{d-3}{2}$ and $h(\rho)=\hat{v}_1'(\rho)/\hat{v}_1(\rho)$ for $\hat{v}_1(\rho)=\rho^{m+2}(1-\rho^2)^{1-\frac{m}{2}}g_1(\rho)$. For $\la=1$, however, $v_1(\rho):=\rho^2 g_1(\rho)$ is mapped into the zero function via map \eqref{Eq:SUSY_map}, and this explains the ``removal" of $\la=1$ mentioned above.

In what follows, we prove that Eq.~\eqref{Eq:SUSY} has no unstable eigenvalues. Our approach is the following. We consider the series expansion of the (normalized) analytic solution of Eq.~\eqref{Eq:SUSY} at $\rho=0$, and we prove that if $\la \in \Hb$ then this function can not be analytically continued beyond $\rho=1$. To simplify matters, we first reduce this equation to the one with four regular singular points. Namely, we make the following change of variables (which preserves the analyticity of solutions at $\rho=0$ and $\rho=1$)
\begin{equation*}
	\rho=\sqrt{x}, \quad \tilde{v}(\rho)=\frac{x^{\frac{3}{2}}}{x+\beta(d)}y(x),
\end{equation*}
and thereby arrive at an equation for $y$ in its canonical Heun form
\begin{multline}\label{Eq:Heun}
	y''(x)+\left(\frac{d+4}{2x}+\frac{2\la+5-d}{2(x-1)}-\frac{2}{x+\beta(d)}\right)y'(x)\\
	+\frac{\la(\la+3)x+(\la+6)(\la+1)\beta(d)-2d+8}{4x(x-1)\big(x+\beta(d)\big)}y(x)=0.
\end{multline} 
The normalized analytic solution at $x=0$ of Eq.~\eqref{Eq:Heun} (which is also called the local Heun function) is given by the following power series
\begin{equation}\label{Def:Heun_series}
	y_{d,\la}(x)= \sum_{n=0}^{\infty}a_n(d,\la)x^n,
\end{equation}
where $a_0(d,\la)=1$, $a_1(d,\la)=A_{-1}(d,\la)$, and the rest of the coefficients obey the recurrence relation
\begin{equation}\label{Eq:RecRel}
	a_{n+2}(d,\la)=A_n(d,\la)a_{n+1}(d,\la)+B_n(d,\la)a_{n}(d,\la),
\end{equation}
where 
\begin{gather*}
	A_n(d,\la):=\frac{(\la+2n+8)(\la+2n+3)\beta(d)-(2n+4)(2n+d-2)}{2\beta(d)(n+2)(2n+d+6)},\\
	B_n(d,\la):=\frac{(\la+2n+3)(\la+2n)}{2\beta(d)(n+2)(2n+d+6)}.	
\end{gather*}
 Note that now, $\la$ is an unstable eigenvalue of Eq.~\eqref{Eq:SUSY} precisely when the radius of convergence of the series \eqref{Def:Heun_series} is larger than one. Therefore, we analyze the asymptotics of the series coefficients. Namely, we study the limiting behavior of the quotient
\begin{equation*}
	r_n(d,\la):=\frac{a_{n+1}(d,\la)}{a_{n}(d,\la)}.
\end{equation*} 
Since $\lim_{n \rightarrow \infty}A_n(d,\la)=1-\beta(d)^{-1}$ and $\lim_{n \rightarrow \infty}B_n(d,\la)=\beta(d)^{-1}$, the so-called characteristic equation of Eq.~\eqref{Eq:RecRel} is 
\begin{equation}\label{Eq:CharEq}
	t^2-\left(1-\tfrac{1}{\beta(d)}\right)t - \tfrac{1}{\beta(d)}=0.
\end{equation}
Furthermore, $t_1=1$ and $t_2=-\beta(d)^{-1}$ are the solutions to Eq.~\eqref{Eq:CharEq} and (by Poincar\'e's theorem for difference equations with variable coefficients) we conclude that either $a_n(d,\la)=0$ eventually in $n$ or 
\begin{equation}\label{Eq:Lim1}
	\lim\limits_{n \rightarrow \infty}r_n(d,\la)=1
\end{equation}
or
\begin{equation}\label{Eq:Lim2}
	\lim\limits_{n \rightarrow \infty}r_n(d,\la)=-\tfrac{1}{\beta(d)}.
\end{equation}
Our intention is to prove that \eqref{Eq:Lim1} holds for all $d \geq 6$ and $\la \in \Hb$.
We found it convenient to separately treat the cases  $d \geq 7$ and $d=6$.
\smallskip

\noindent \emph{The case $d \geq 7$}.
Note that, given $\la \in \Hb$, $a_n(d,\la)$ does not equal zero eventually in $n$, since otherwise by backward substitution in Eq.~\eqref{Eq:RecRel} we get that $a_0(d,\la)=0$, which is in conflict with the assumption that $a_0(d,\la)=1$. Therefore, either \eqref{Eq:Lim1} or \eqref{Eq:Lim2} is true. In what follows we show that \eqref{Eq:Lim2} can not hold. To that end we derive
from Eq.~\eqref{Eq:RecRel} a recurrence relation for $r_n$. Namely, we have
\begin{equation}\label{Eq:RecRel2}
	r_{n+1}(d,\la)=A_n(d,\la)+\frac{B_n(d,\la)}{r_n(d,\la)},
\end{equation}
where 
$
	r_0(d,\la)=A_{-1}(d,\la).
$
The explicit expression of $r_n(d,\la)$ gets increasingly complicated as $n$ increases, and to circumvent the issue of estimating such complicated expressions we do the following. We first construct for the recurrence relation \eqref{Eq:RecRel2} a ``simple" approximate solution $\tilde{r}_n$, whose limiting value is 1. We then prove perturbatively that the actual solution $r_n$ is  close to it (globally in $d$ and $\la$), thereby excluding the   possibility of \eqref{Eq:Lim2}.
The approximate solution we use is the following
 \begin{equation}\label{Eq:Quasi-Sol}
	\tilde{r}_n(d,\lambda):= \frac{\lambda^2}{2(n+1)(2n+d+4)}+\frac{(4n+7)\lambda}{2(n+1)(2n+d+4)}+\frac{2n+4}{2n+d+4}.
\end{equation}
We chose this expression such as to emulate the behavior of $r_n(d,\la)$ for different values of the participating parameters. Namely, since $r_n(d,\la)$ is a ratio of two polynomials in $\la$ whose degrees differ by two, it behaves like a second degree polynomial for large values of $\la$. In fact, a simple induction in $n$ based on \eqref{Eq:RecRel2} yields
\[
r_n(d,\la)=\frac{\lambda^2}{2(n+1)(2n+d+4)}+\frac{(4n+7)\lambda}{2(n+1)(2n+d+4)} + O_{n,d}(1)\quad \text{as} \quad\la \rightarrow \infty.	
\]
This gives rise to the first two terms in \eqref{Eq:Quasi-Sol}, and to get the last term, we perform similar analysis after fixing $\la=0$.
 Now, to measure the ``distance" between $r_n$ and $\tilde{r}_n$, we define the relative difference
\begin{equation}\label{Def:Delta} 
	\delta_n(d,\la):=\frac{r_n(d,\la)}{\tilde{r}_n(d,\la)}-1.
\end{equation}
We intend to show that $\delta_n$ is small uniformly in $d$ and $\la$, and to that end we substitute \eqref{Def:Delta} into Eq.~\eqref{Eq:RecRel2} and derive the recurrence relation for $\delta_n$
\begin{equation}\label{Eq:DeltaRec} 
	\delta_{n+1}=\varepsilon_n-C_n\frac{\delta_n}{1+\delta_n},
\end{equation}
where
\begin{equation}\label{Eq:Eps_and_C} 
	\varepsilon_n=\frac{A_n\tilde{r}_n+B_n}{\tilde{r}_n\tilde{r}_{n+1}}-1 \quad \text{and} 
	\quad C_n=\frac{B_n}{\tilde{r}_n\tilde{r}_{n+1}}.
\end{equation}
Now, for every $ d \geq 7, \la\in\Hb$ and $n\geq 1$, the following estimates hold
 \begin{align}\label{Eq:Estimates}
	|\delta_1(d,\la)|\leq\frac{1}{2}, \quad |\varepsilon_n(d,\la)|\leq \frac{5}{12}-\frac{4}{2d+1}, \quad |C_n(d,\la)|\leq\frac{1}{12}+\frac{4}{2d+1}. 
\end{align}
We note that so far (in our previous works) all of the proofs of this kind of estimates  rely crucially on the fact that the (analogues of the) three quantities above, $\delta_1(d,\la)$, $\varepsilon_n(d,\la)$ and $C_n(d,\la)$, can be expressed as ratios of polynomials belonging to $\mathbb{Z}[n,d,\la]$. This is, however, not the case here, due to the presence of the parameter $\beta(d)$, see \eqref{Def:Alpha_Beta}. To deal with this issue we do the following. We let $d$ depend on a new parameter, namely we let
\begin{equation*}
	d=d(w):=\frac{2(w^2+8w+14)}{2w+5},
\end{equation*}
for $w>0$. Consequently, from \eqref{Def:Alpha_Beta} we get that
\begin{equation}\label{Def:Beta(w)}
	\beta(d(w))=\frac{2(w+2)\big((w+3)\sqrt{3}+2(w+2)\big)}{3(2w+5)}.
\end{equation}
Therefore, all three quantities $\delta_1(d(w),\la)$, $\varepsilon_n(d(w),\la)$ and $C_n(d(w),\la)$ can be expressed as ratios of polynomials belonging to $\mathbb{Z}[n,w,\la,\sqrt{3}]$. With this we can carry out an analogous proof to the one in the previous works. 
 We illustrate the process on the second estimate in \eqref{Eq:Estimates}; the other two are established analogously. 
We first note that it is enough to establish the estimate for $w \geq \frac{7}{5}$, as this covers the interval $d \geq 7$. Also, note that the inequalities in \eqref{Eq:Estimates} are to be proven for $n \geq 1$. Therefore, we consider $\varepsilon_{n+1}\big(d(w+\frac{7}{5}),\la\big)$ (with shifted parameters) and prove the estimate for $n,w \geq 0$. Due to \eqref{Def:Beta(w)} we have that
\begin{equation}\label{Eq:eps_w}
	\varepsilon_{n+1}\big(d(w+\tfrac{7}{5}),\la\big)=\frac{P_1(n,w,\la)\sqrt{3}+P_2(n,w,\la)}{P_3(n,w,\la)\sqrt{3}+P_4(n,w,\la)},
\end{equation}
for some $P_j(n,w,\la) \in \mathbb{Z}[n,w,\la]$. Note that it is enough to establish the desired estimate for \eqref{Eq:eps_w} on the imaginary line, along with proving that \eqref{Eq:eps_w} is analytic and polynomially bounded for $\la\in\Hb$, as we can then by the Phragm\'en-Lindel\"of principle extend the estimate from the imaginary line to the whole of $\Hb$. To that end we compute
$$
|P_j(n,w,it)\sqrt{3}+P_{j+1}(n,w,it)|^2=:Q_j(n,w,t^2)\sqrt{3}+Q_{j+1}(n,w,t^2), 
$$
where $j \in \{ 1,3\}$, $t$ real, $Q_1,Q_2 \in \mathbb{Z}[n,w,t^2]$, and $Q_3,Q_4 \in \mathbb{N}[n,w,t^2]$. Now, the desired estimate is equivalent to
\begin{equation}\label{Eq:w_est}
\frac{Q_1(n,w,t^2)\sqrt{3}+Q_{2}(n,w,t^2)}{Q_3(n,w,t^2)\sqrt{3}+Q_{4}(n,w,t^2)} \leq \left(\frac{5}{12}-\frac{4}{2d(w+\tfrac{7}{5})+1} \right)^2 
	= \left(\frac{5}{12}\frac{100w^2+650w+1039}{100w^2+1130w+2911}\right)^2.
\end{equation}
Then, by a straightforward calculation we see that
\begin{multline*}
	[5(100w^2+650w+1039)]^2\,[Q_3(n,w,t^2)\sqrt{3}+Q_{4}(n,w,t^2)]\\-[12(100w^2+1130w+2911)]^2\,[Q_1(n,w,t^2)\sqrt{3}+Q_{2}(n,w,t^2)]\\
	=R_1(n,w,t^2)\sqrt{3}+R_2(n,w,t^2),
\end{multline*}
where $R_1$ and $R_2$ have manifestly positive coefficients. This proves the inequality \eqref{Eq:w_est}, and thereby for $d \geq 7$ and $n \geq 1$ the estimate for $\varepsilon_n(d,\la)$ in \eqref{Eq:Estimates} holds on the imaginary line. Since $\la \mapsto \varepsilon_n(d,\la)$ is obviously polynomially bounded in $\Hb$, it remains to prove that it is analytic there. This simply follows from \eqref{Eq:Eps_and_C} and the fact that both zeros of $\la \mapsto \tilde{r}_n(d,\la)$  are negative.  

Now, having established estimates \eqref{Eq:Estimates}, we employ a simple inductive argument to conclude from \eqref{Eq:DeltaRec} that
\begin{equation}\label{Eq:FinalEst}
	|\delta_n(d,\la)|\leq\frac{1}{2},
\end{equation}
for all $n\geq1, \la\in\Hb$ and $d \geq 7$. Since $\lim_{n\rightarrow\infty}\tilde{r}_n(d,\la)=1$, Eqs.~\eqref{Eq:FinalEst} and \eqref{Def:Delta} rule out \eqref{Eq:Lim2} and we therefore finally conclude that \eqref{Eq:Lim1} holds. This proves the mode stability claim for $d \geq 7$. 
\smallskip

\noindent \emph{The case $d = 6$}. In this case it suffices to prove the following estimates
 \begin{align}\label{Eq:Estimates_d=6}
	|\delta_1(6,\la)|\leq\frac{1}{3}, \quad |\varepsilon_n(6,\la)|\leq \frac{1}{12}+\frac{1}{4(n+1)}, \quad |C_n(6,\la)|\leq\frac{1}{2}-\frac{1}{2(n+1)}, 
\end{align}
for $\la \in \Hb$ and $n \geq 1$. The proof for the $\varepsilon_n$ estimate (and therefore for the other two as well) is analogous to the one above. Namely, we have that $\beta(6)=\frac{4}{3}+\frac{2}{3}\sqrt{6}$ and therefore the quantity $\varepsilon_{n+1}(6,\la)$ can be written in the following way
\begin{equation*}
	\varepsilon_{n+1}(6,\la)=\frac{P_1(n,\la)\sqrt{6}+ P_2(n,\la)}{P_3(n,\la)\sqrt{6}+ P_4(n,\la)},
\end{equation*}
for some $P_j(n,\la) \in \mathbb{Z}[n,\la]$. The rest of the proof goes the same as above. 
\end{proof}

\begin{remark}\label{Rem:Quasi}
	Although we treat an analogous problem, the procedure we used in the proof to construct the approximation $\tilde{r}_n$ differs from the one in \cite{CosDonGlo17}. The new method is simpler and more transparent; in particular it altogether avoids polynomial approximation techniques in order to find coefficients of the powers of $\la$ in \eqref{Eq:Quasi-Sol}. Furthermore, by allowing the bounds in \eqref{Eq:Estimates_d=6} to depend on $n$ as well, we managed to decrease the starting point of induction all the way to $n=1$, by means of which we avoided having to use Hurwitz stability criteria.
\end{remark}

\subsection{Estimates of the linear flow} 
From now on, for the sake of simplicity, we drop the subscript in $\| \cdot \|_{\mc H}$ and we simply write $\| \cdot \|$ for the norm relative to $\mc H$. 
 For the following result we need the Riesz projection operator relative to the unstable eigenvalue $\la=1$. Namely, we define
\begin{equation*}
	\mb P:= \frac{1}{2\pi i}\int_{\gamma}\mb R_{\mb L}(\la) \, d\la,
\end{equation*}
where $\gamma$ is a positively oriented circle centered at $1$ with radius $r_\gamma < 1$. Let us also recall the function $\mb g$ from the proof of Proposition \ref{Prop:unstable_spectrum}, namely
\begin{equation*}
	\mb g(\rho) = 
	\begin{pmatrix}
		g_1(\rho) \\
		\rho g_1'(\rho) + 3g_1(\rho)
	\end{pmatrix},
\end{equation*}
where $g_1$ is given in \eqref{Def:g}.
\begin{proposition}\label{Prop:Semigr_perturbed}
	For every odd $d \geq 5$ the following statements hold.
	\begin{itemize}
		\item[i)]  $\rg {\bf P} = \langle {\bf g} \rangle $. \\
		\item[ii)]  $ {\bf S}(\tau){\bf P}{\bf u}=e^\tau {\bf P}{\bf u}$ for all ${\bf u} \in \mc H$.  \\
		\item[iii)] There are $\omega>0$ and $C \geq 1$ such that 
		$$
		\| {\bf S}(\tau)(1- {\bf P}){\bf u}  \| \leq C e^{-\omega\tau} \| (1- {\bf P}){\bf u}  \|
		$$
		 for all ${\bf u} \in \mc H$. 
	\end{itemize}
\end{proposition}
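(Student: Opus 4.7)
My plan for Proposition \ref{Prop:Semigr_perturbed} is to dispatch the three items in order, with the bulk of the work concentrated on (i).

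For (i), the inclusion $\langle \mb g \rangle \subseteq \rg \mb P$ is immediate: in the proof of Proposition \ref{Prop:unstable_spectrum} we verified $(1-\mb L)\mb g = 0$, so $\mb P \mb g = \mb g$. For the reverse inclusion I would invoke Theorem \ref{Thm:Semigroups} to identify $\rg \mb P$ with the (finite-dimensional) generalized eigenspace of $\mb L$ at $\la = 1$, and then prove both geometric and algebraic simplicity. Geometric simplicity $\ker(1-\mb L) = \langle \mb g \rangle$ is essentially contained in the Frobenius analysis from the proof of Proposition \ref{Prop:unstable_spectrum}: at $\la = 1$, the only local solution of \eqref{Eq:Eigenv} compatible with $u_1 \in H^{k_d}_r(\B^{d+2})$ is a scalar multiple of $g_1$. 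Algebraic simplicity reduces to showing that $\mb g \notin (1-\mb L)\mc D(\mb L)$, i.e., that the inhomogeneous equation $(1-\mb L)\mb u = \mb g$ admits no solution in $\mc D(\mb L)$. Translated to the first component, this gives a concrete inhomogeneous version of \eqref{Eq:Eigenv} at $\la = 1$; variation of parameters using $g_1$ together with a second linearly independent solution (which is singular at $\rho = 0$) produces a one-parameter family of candidates, none of which is simultaneously analytic at $\rho = 0$ and regular enough at $\rho = 1$ to belong to $H^{k_d}_r(\B^{d+2})$. This rules out a Jordan block at $\la = 1$ and yields $\rg \mb P = \langle \mb g \rangle$.

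Part (ii) then follows from the fact that the Riesz projection $\mb P$ commutes with $\mb L$, and hence with the semigroup, so $\mb S(\tau)\mb P \mb u = \mb P \mb S(\tau)\mb u$; since $\rg \mb P = \langle \mb g \rangle$ is one-dimensional and $\mb L \mb g = \mb g$, the restriction $\mb S(\tau)|_{\rg \mb P}$ is simply multiplication by $e^{\tau}$. For part (iii), I would apply Theorem \ref{Thm:Semigroups} directly: under our compact-perturbation setting that result provides a spectral mapping theorem together with the structural description of $\sigma(\mb L)$ in $\{\Re \la > -\tfrac{3}{2}\}$ as a finite collection of eigenvalues of finite multiplicity. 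Combined with Proposition \ref{Prop:unstable_spectrum} and (i), the subspace $\ker \mb P$ is $\mb S(\tau)$-invariant and $\sigma(\mb L|_{\ker \mb P}) \subseteq \{\Re \la \leq -\omega\}$ for some $\omega > 0$. The spectral mapping theorem then transfers this gap to the growth bound of $\mb S(\tau)|_{\ker \mb P}$, producing the asserted estimate with some $C \geq 1$.

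The main obstacle is clearly the algebraic simplicity argument in (i); the rest is bookkeeping on top of the machinery already prepared in the paper. Simplicity of $\la = 1$ is heuristically transparent because $\mb g$ arises from differentiating the one-parameter family $u_T$ in $T$, hence encodes the time-translation symmetry of the problem, but making this rigorous still requires the Frobenius and variation-of-parameters analysis sketched above, with the explicit parameters $\alpha(d),\beta(d)$ and potential $V_d$ coming into play at the endpoint $\rho = 1$.
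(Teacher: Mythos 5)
Your overall route coincides with the paper's: identify $\rg \mb P$ with the finite-dimensional generalized eigenspace at $\la=1$ via Theorem \ref{Thm:Semigroups}, get geometric simplicity from the Frobenius analysis of \eqref{Eq:Eigenv}, reduce algebraic simplicity to the non-solvability of $(1-\mb L)\mb u=-\mb g$ in $\mc D(\mb L)$, and obtain ii) and iii) from the projection calculus and the spectral gap. Parts ii) and iii) are fine as you describe them. The problem is that you dismiss the algebraic simplicity step as routine, when it is in fact the hard core of the proposition. After variation of parameters (with the coefficient of the second, $\rho^{-d}$-singular solution forced to vanish at the origin), every admissible candidate has the form $c_1 g_1 + u_{\mathrm{part}}$ with $u_{\mathrm{part}}$ regular at $\rho=0$, and since $g_1$ is perfectly smooth at $\rho=1$, the whole argument hinges on showing that $u_{\mathrm{part}}$ genuinely fails to be in $H^{k_d}_r(\B^{d+2})$ near $\rho=1$ --- concretely, that the term $\mc I_d$ in \eqref{Def:I_d} carries a nonzero $(1-\rho)^{\frac{d-5}{2}}\log(1-\rho)$ contribution. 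Your sketch simply asserts that no candidate is ``regular enough at $\rho=1$,'' but a priori the logarithmic coefficient could vanish for some $d$, and exactly such a cancellation would create the Jordan block you are trying to exclude. Nothing in your plan rules this out.

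For any fixed $d$ this is a finite computation, but proving it uniformly in $d$ is where the paper's proof invests its main effort: for $d=5$ it follows from strict positivity of the integrand, while for $d\ge 7$ the paper integrates by parts to rewrite $\mc I_d$, up to factors analytic at $\rho=1$, in terms of a solution $\tilde v$ of the supersymmetric equation \eqref{Eq:SUSY} at $\la=1$, and then invokes the absence of unstable modes for \eqref{Eq:SUSY} (already established inside the proof of Proposition \ref{Prop:unstable_spectrum}) to conclude that $\tilde v$, and hence $\mc I_d$, cannot be analytic at $\rho=1$. This transfer of information from the SUSY problem back to the inhomogeneous equation is a genuine idea missing from your proposal; the heuristic that $\mb g$ comes from differentiating $u_T$ in $T$ explains why one expects simplicity, but it does not substitute for this non-vanishing argument. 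To complete your plan you would need either this SUSY identification or some other uniform-in-$d$ mechanism (e.g.\ a sign or monotonicity argument for the Laurent coefficient at $\rho=1$), and without it the step ``rules out a Jordan block'' does not close.
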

\begin{proof}
	To prove i) we first show that
	\begin{equation}\label{Eq:Ker}
		\ker\, (1-\mb L)= \langle \mb g \rangle.
	\end{equation}
	By similar reasoning to the one in the first part of the proof of Proposition \ref{Prop:unstable_spectrum} we conclude that for any solution to the equation $(1-\mb L)\mb u=0$, we have that the first component $u_1$
 	satisfies Eq.~\eqref{Eq:Eigenv} for $\la=1$. Therefore
 	\begin{equation*}
 		u_1= c_1 u_{1,1} + c_2 u_{1,2}
 	\end{equation*} 	
 	(see \eqref{Def:Frobenius_at_0}) for some $c_1,c_2 \in \mathbb{C}$. Since $u_{1,2} \notin H^{k_d}_r(\B^{d+2})$, then $c_2=0$ and furthermore $u_1=c_1 u_{1,1}=\tilde{c}_1g_1$, for some $\tilde{c}_1 \in \C$. This proves \eqref{Eq:Ker}.
 	
	Now, by the definition  of the projection $\mb P$ we have that $\mb P \mb g = \mb g$, and consequently $\langle \mb g \rangle \subseteq \rg \mb P$. To prove the reversed inclusion we first show that the dimension of $\rg \mb P$ is finite. Indeed, otherwise $\la=1$ would belong to the essential spectrum of $\mb L$ (see \cite{Kat95}, p.~239, Theorem 5.28), and therefore to the spectrum of its compact perturbation $\mb L_0 = \mb L - \mb L'$, but this is  in conflict with \eqref{Eq:Spec_L_0}. The space $\rg \mb P$ reduces the operator $\mb L$ and we have that $\sigma(\mb L|_{\rg \mb P})= \{ 1 \}$, see \cite{Kat95}, Sec.~III.6.4, in particular p.~178, Theorem 6.17. Therefore, since $\text{dim} \rg \mb P$ is finite, the operator $1-\mb L|_{\rg \mb P}$ is nilpotent, i.e., there is $k \in \mathbb{N}$ for which $(1-\mb L|_{\rg \mb P})^k=0$. We show that $k=1$. To that end, we assume the contrary, that $k \geq 2$. Then there exists a nontrivial $\mb u = (u_1,u_2) \in \mc D(\mb L)$ for which \begin{equation}\label{Eq:Spec_nonhom}
		(1-\mb L)\mb u=-\mb g.
	\end{equation} 
	Consequently,
	\begin{equation}\label{Eq:u_1_cond}
		u_1 \in C^\infty(0,1) \cap H^{k_d}_r(\B^{d+2}),
	\end{equation}
	 and
	\begin{equation}\label{Eq:Eigenv_nonhom}
		(1-\rho^2)u_1''(\rho)+\left(\frac{d+1}{\rho}-8\rho \right)u_1'(\rho)-\Big( 12-V_d(\rho) \Big)u_1(\rho)=G(\rho),
	\end{equation}
	for $\rho \in (0,1),$ where $G(\rho)=2\rho g_1'(\rho)  + 7 g_1(\rho).$ Since $g_1$ is a solution to the homogeneous version of Eq.~\eqref{Eq:Eigenv_nonhom}, by reduction of order we obtain another one
	\begin{equation}\label{Def:tilde_g_1}
		\tilde{g}_1(\rho):= g_1(\rho)\int_{\rho_1}^{\rho}\frac{(1-x^2)^{\frac{d-7}{2}}}{x^{d+1}}\frac{dx}{g_1(x)^2},
	\end{equation}
	for an arbitrary choice of $\rho_1 \in (0,1)$. Note that since $g_1$ is even we have that $\tilde{g}_1(\rho)=\rho^{-d}A(\rho)$
	for some function $A$ which is analytic at $\rho=0$. Now, by the variation of constants formula we have that
	\begin{equation}\label{Eq:u_1_nonhom}
		u_1(\rho)= c_1 g_1(\rho) + c_2 \tilde{g}_1(\rho) + \tilde{g}_1(\rho) \int_{0}^{\rho} \frac{g_1(x)G(x)x^{d+1}}{(1-x^2)^{\frac{d-5}{2}}}dx - g_1(\rho) \int_{0}^{\rho} \frac{\tilde{g}_1(x)G(x)x^{d+1}}{(1-x^2)^{\frac{d-5}{2}}}dx,
	\end{equation}
	for a choice of $c_1,c_2 \in \C$. Due to a strong singularity of $\tilde{g}_1$ at $\rho=0$ we conclude that $c_2=0$, and then we separately treat cases $d=5$ and $d \geq 7$.
	
	For $d=5$, we choose $\rho_1=\tfrac{1}{2}$ in \eqref{Def:tilde_g_1}. Then $\tilde{g}_1(\rho) \simeq \log(1-\rho)$ for $\rho$ near 1. Furthermore, from \eqref{Eq:u_1_nonhom} it follows that $u_1$ has the same behavior near $\rho=1$, unless the integral multiplying $\tilde{g}_1$ is zero for $\rho=1$. But this is not the case as the integrand is strictly positive on $(0,1)$. Consequently, $u_1 \notin H_r^{3}(\B^7)$, which contradicts \eqref{Eq:u_1_cond}.
	
	For $d \geq 7$, we choose $\rho_1=1$ in \eqref{Def:tilde_g_1}, and since $g_1$ is non-vanishing at $\rho=1$ we have that 
	\begin{equation*}
		\tilde{g}_1(\rho)= (1-\rho^2)^{\frac{d-5}{2}}B(\rho),
	\end{equation*}
for some function $B$ which is analytic at $\rho=1$. Consequently, the last term in Eq.~\eqref{Eq:u_1_nonhom} is analytic at both $\rho=0$ and $\rho=1$. It remains to understand regularity properties of the remaining term
\begin{equation}\label{Def:I_d}
	\mc I_d(\rho):= \tilde{g}_1(\rho)\int_{0}^{\rho} \frac{g_1(x)G(x)x^{d+1}}{(1-x^2)^{\frac{d-5}{2}}}dx.
\end{equation}
We show that $\mc I_d$ is not analytic at $\rho=1$; this then implies that
\begin{equation}\label{Eq:I_d_asy}
	\mc I_d(\rho) \simeq (1-\rho)^{\frac{d-5}{2}}\log(1-\rho) \quad \text{near} \quad \rho=1,
\end{equation}  
and consequently $u_1 \notin H^{k_d}_r(\B^{d+2})$. Note that non-analyticity of $\mc I_d$ at $\rho=1$ is equivalent to the Laurent expansion at $x=1$ of the integrand in \eqref{Def:I_d}  having a nonzero coefficient multiplying $(1-x)^{-1}$. Although for a particular $d$ this can be evidenced by a straightforward calculation, proving it uniformly for all $d$ is a difficult problem. Interestingly, we managed to do this by relating $\mc I_d$ to a solution of the supersymmetric problem \eqref{Eq:SUSY} for $\la=1$, for which the non-analyticity at $\rho=1$ is known from Proposition \ref{Prop:unstable_spectrum}. For simplicity let $d=2m+3$. Then by integration by parts we have
\begin{align}
	\mc I_d(\rho)&=\tilde{g}_1(\rho)\int_{0}^{\rho}\frac{x^{2m+4}}{(1-x^2)^{m-1}}\big(  2xg_1'(x)g_1(x)+7g_1(x)^2  \big)dx \nonumber \\
	&=B(\rho)(1-\rho^2)^{m-1} \int_{0}^{\rho}\frac{x^{2m-2}}{(1-x^2)^{m-1}}\frac{d}{dx}\big(  x^7g_1(x)^2  \big)dx \nonumber \\
	&=B(\rho) \left[ \rho^{2m+5}g_1(\rho)^2-2(m-1)(1-\rho^2)^{m-1}\int_{0}^{\rho}\frac{x^{2m+4}}{(1-x^2)^{m}}g_1(x)^2dx \right]. \label{Eq:I_d}
\end{align} 
On the other hand, by a direct calculation we see that the function
\begin{equation*}
	\tilde{v}(\rho):=\frac{(1-\rho^2)^{m-1}}{\rho^{2m+2}g_1(\rho)}\int_{0}^{\rho}\frac{x^{2m+4}}{(1-x^2)^m}g_1(x)^2\, dx
\end{equation*}
solves Eq.~\eqref{Eq:SUSY} for $\la=1$. Note that $\tilde{v}\in C^\infty[0,1)$, and since, according to the proof of Proposition \ref{Prop:unstable_spectrum}, Eq.~\eqref{Eq:SUSY} has no unstable modes, $\tilde{v}$ is not analytic at $\rho=1$. From this and Eq.~\eqref{Eq:I_d} it follows that $\mc I_d$ is also not analytic at $\rho=1$; thereby~\eqref{Eq:I_d_asy} follows, and therefore $u_1 \notin H^{k_d}_r(\B^{d+2})$. This contradicts \eqref{Eq:u_1_cond}, and we finally conclude that $k=1$ for every odd $d \geq 5$. From this and Eq.~\eqref{Eq:Ker}  we have that $\rg \mb P \subseteq \ker (1-\mb L) \subseteq \langle \mb g \rangle $, and this finishes the proof of i).

Claim ii) follows from i) and the correspondence between point spectra of a semigroup and its generator. Claim iii) is a consequence of Proposition \ref{Prop:unstable_spectrum} and Theorem \ref{Thm:Semigroups}.
\end{proof}

\subsection{Estimates of the nonlinear terms}\label{Sec:Nonlin_est} In this section we prove local Lipschitz continuity of the nonlinear operator $\mb N$, see \eqref{Def:L'}.
 Throughout the rest of the paper we use the following definition 
 \begin{equation}\label{Def:B_delta}
 	\mc B_{\delta}:=\{ \mb u \in \mc H : \| \mb u \| \leq \delta \}.
 \end{equation}
\begin{lemma}\label{Lem:Nonlin_est}
	Let $\delta >0$. We have that
	\begin{equation}\label{Eq:Nonlin_est}
		\| \bf N(\bf u) -  N( v) \| \lesssim (\| 
		 u \| + \| 
		 v \|) \|  u - v \|,
	\end{equation}
	for all $\bf u,v \in{\bf{\mc B}}_\delta.$
\end{lemma}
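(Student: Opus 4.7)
The plan is to reduce \eqref{Eq:Nonlin_est} to Moser-type product estimates in the radial Sobolev space on $\mathbb{B}^{d+2}$. From \eqref{Def:Nonlin_N} one reads off that $N_d(u) = c_1(\rho) u^2 + c_2(\rho) u^3$ with $c_1, c_2 \in C^\infty[0,1]$ (and all derivatives bounded on $[0,1]$); therefore
\begin{equation*}
N_d(u) - N_d(v) = (u-v)\bigl[ c_1(\rho)(u+v) + c_2(\rho)(u^2 + uv + v^2) \bigr].
\end{equation*}
Since only the second slot of $\mathbf{N}$ is nontrivial, everything reduces to bounding the $H^{k_d-1}(\mathbb{B}^{d+2})$-norm of this product by $(\|u\|+\|v\|)\|u-v\|$, where the norms in question are the $\mathcal{H}$-norms on the scalar components, i.e.\ $H^{k_d}_r(\mathbb{B}^{d+2})$.

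The main tool I would use is the bilinear Sobolev multiplication estimate
\begin{equation*}
\|fg\|_{H^{k_d-1}(\mathbb{B}^{d+2})} \lesssim \|f\|_{H^{k_d}(\mathbb{B}^{d+2})}\,\|g\|_{H^{k_d}(\mathbb{B}^{d+2})},
\end{equation*}
which is available since $2k_d-(k_d-1) = (d+3)/2 > (d+2)/2 = \tfrac{1}{2}\dim\mathbb{B}^{d+2}$, and which transfers from $\mathbb{R}^{d+2}$ to the ball via a standard (radial) Sobolev extension. The smoothness of $c_1, c_2$ on $[0,1]$ makes multiplication by them bounded on $H^{k_d-1}$ and on $H^{k_d}$. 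Applied directly, this estimate handles the quadratic contribution $c_1(u-v)(u+v)$ and yields a factor $\|u-v\|_{H^{k_d}}(\|u\|_{H^{k_d}}+\|v\|_{H^{k_d}})$, as desired.

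For the cubic contribution $c_2(u-v)(u^2+uv+v^2)$, the naive two-fold iteration of the bilinear estimate is only borderline, so I would proceed via the equivalent weighted $1$D norms provided by Lemma \ref{Lem:Equiv(1)} (applied with dimension $d+2$ and $k=k_d$ respectively $k_d-1$), expand the product with Leibniz, and control the resulting pointwise/weighted factors via the weighted Hardy inequalities of Lemma \ref{Lem:Hardy_on_[0,1]}. The smallness assumption $\|u\|, \|v\|\le\delta$ is then used to absorb one factor of $u$ or $v$ in $\mathcal{H}$-norm, bounding the cubic contribution by $\delta\,\|u-v\|_{H^{k_d}}(\|u\|_{H^{k_d}}+\|v\|_{H^{k_d}})$, which is subsumed into \eqref{Eq:Nonlin_est}.

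The main obstacle is that $H^{k_d}(\mathbb{B}^{d+2})$ lies exactly \emph{half a derivative below} the Sobolev $L^\infty$-embedding threshold $(d+2)/2$, so $H^{k_d}$ is not an algebra and the iterated bilinear estimate for the cubic term sits precisely at the endpoint. Overcoming this requires exploiting the radial weighted structure encoded in Lemmas \ref{Lem:Equiv(1)} and \ref{Lem:EquivD}, together with the Hardy inequalities of Lemma \ref{Lem:Hardy_on_[0,1]}, which together recover the missing half-derivative of regularity at the origin and thereby close the estimate.
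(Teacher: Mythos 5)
Your decomposition and the treatment of the quadratic term are fine, but the cubic term is where your argument has a genuine gap, and it is precisely the part where your route diverges from what is actually needed. You correctly observe that iterating the bilinear estimate fails (one would need an intermediate exponent $s$ with simultaneously $s>\tfrac{d}{2}$ and $2k_d-s>\tfrac{d+2}{2}$, i.e.\ $s<\tfrac{d}{2}$), but your proposed repair --- passing to the weighted one-dimensional norms of Lemma \ref{Lem:Equiv(1)}, expanding by Leibniz, and invoking the Hardy inequalities of Lemma \ref{Lem:Hardy_on_[0,1]} --- is not substantiated. After a Leibniz expansion of a triple product, each term contains derivatives falling on all three factors, and to bound it in a weighted $L^2$ norm you must control two of the three factors pointwise (in weighted $L^\infty$). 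The Hardy inequalities you cite are weighted $L^2$-to-$L^2$ statements and do not provide such pointwise control; you would need additional Strauss-type radial embeddings that are nowhere in the lemmas you invoke, so as written the cubic estimate does not close. Moreover, the smallness $\|{\bf u}\|,\|{\bf v}\|\le\delta$ cannot rescue the derivative bookkeeping: it only converts a bound of the form $(\|{\bf u}\|+\|{\bf v}\|)^2\|{\bf u}-{\bf v}\|$ into the stated one (which the $\delta$-dependent implicit constant already allows), and both the quadratic and cubic terms need the same underlying product estimate regardless of $\delta$.

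The missing idea is that the required trilinear bound holds outright, with no weighted radial structure and no smallness: for $f\in C^\infty_e[0,1]$ one has $\|f u_1u_2u_3\|_{H^{k_d-1}(\B^{d+2})}\lesssim\prod_{i=1}^3\|u_i\|_{H^{k_d}(\B^{d+2})}$, proved by estimating each Leibniz term $\partial^{\alpha_0}f\,\prod_i\partial^{\alpha_i}u_i$ with $\sum_i|\alpha_i|\le k_d-1$ directly by H\"older with exponents $q_i=\tfrac{2(d+2)}{1+2|\alpha_i|}$ (which satisfy $\sum_i q_i^{-1}\le\tfrac12$) and the critical, non-$L^\infty$ Sobolev embeddings $H^{k_d-|\alpha_i|}(\B^{d+2})\hookrightarrow L^{q_i}(\B^{d+2})$; this is estimate \eqref{Eq:Nonlin_u_1u_2u_3} in the paper. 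The endpoint obstruction you identify is an artifact of factoring the product through an intermediate bilinear space; distributing the Lebesgue exponents over the three factors at once dissolves it. With that trilinear estimate in hand, the identities $a^2-b^2=(a-b)(a+b)$ and $a^3-b^3=(a-b)(a^2+ab+b^2)$ give \eqref{Eq:Nonlin_est} for both the quadratic and cubic contributions simultaneously, which is exactly the paper's proof.
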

\begin{proof}
	We first prove that given $f \in C^\infty_e[0,1]$ we have that
	\begin{equation}\label{Eq:Nonlin_u_1u_2u_3}
		\| fu_1 u_2 u_3 \|_{H^{k_d-1}(\B^{d+2})} \lesssim \prod_{i=1}^{3}\| u_i \|_{H^{k_d}(\B^{d+2})}
	\end{equation}
	for all $u_i \in H^{k_d}$, $i=1,2,3$. Let $\alpha_i \in \mathbb{N}_0^d$, $i=0,1,2,3$, with $\sum_{i=0}^{3}|\alpha_i| \leq k_d-1.$ Then by H\"older's inequality and Sobolev embedding we have that
	\begin{align*}
		\| \partial^{\alpha_0}f \prod_{i=1}^{3}\partial^{\alpha_i}u_i  \|_{L^2(\B^{d+2})} &\lesssim \prod_{i=1}^{3} \|\partial^{\alpha_i} u_i \|_{L^{\frac{2(d+2)}{1+2|\alpha_i|}}(\B^{d+2})}
		\lesssim \prod_{i=0}^{3} \| \partial^{\alpha_i} u_i \|_{H^{k_d-|\alpha_i|}(\B^{d+2})} \\
		&\lesssim \prod_{i=1}^{3}\| u_i \|_{H^{k_d}(\B^{d+2})},
	\end{align*}
	for all $u_i \in H^{k_d}$, $i=1,2,3$. From this estimate Eq.~\eqref{Eq:Nonlin_u_1u_2u_3} follows. Now, based on \eqref{Def:L'}-\eqref{Def:Nonlin_N}, the estimate \eqref{Eq:Nonlin_est} follows from Eq.~\eqref{Eq:Nonlin_u_1u_2u_3} and elementary identities $a^2-b^2=(a-b)(a+b)$ and $a^3-b^3=(a-b)(a^2+ab+b^2)$.
\end{proof}

\subsection{Contraction scheme}
 Let us recall the integral equation that governs the flow near $\Psi_{\text{st}}$
\begin{equation}\label{Eq:Duhamel_copy}
	\Phi(\tau)=\mb S(\tau)\mb U(\mb v,T) + \int_{0}^{\tau}\mb S(\tau-s)\mb N(\Phi(s))ds.
\end{equation}
In this section we prove the small data global existence and decay of solutions to Eq.~\eqref{Eq:Duhamel_copy}.
%
%
 An impediment to decay is, of course, the existence of the symmetry-induced instability $\la=1$ on the linear level. To deal with this, we use a Lyapunov-Perron-type argument. Namely, we first suppress the instability by conveniently modifying the initial data, and then later on we show that there is a choice of $T$ for which the correction term vanishes, thereby showing that given any small enough $\mb v$, there is a choice of $T$ which yields a unique, globally existing, and exponentially decaying solution to Eq.~\eqref{Eq:Duhamel_copy}.

 Let us make some technical preparations. First, we introduce the Banach space
\begin{equation*}
	\mc X := \{ \Phi \in C([0,\infty),\mc H) : 
	\| \Phi  \|_{\mc X} := \sup_{\tau>0}e^{\omega\tau}\|\Phi(\tau)\| < \infty
	 \}, 
\end{equation*}
where $\omega$ is from Proposition \ref{Prop:Semigr_perturbed}. 
Now, for $\mb u \in \mc H$ and $\Phi \in \mc X$
we define the correction term
\begin{equation}\label{Def:Correction_term}
	\mb C(\mb u, \Phi):= \mb P \left( \mb u + \int_{0}^{\infty}e^{- s}\mb N(\Phi(s))ds \right),
\end{equation}
and the map
\begin{equation*}
	\mb K_{\mb u}( \Phi)(\tau) := \mb S(\tau)\big(\mb u - \mb C(\mb u, \Phi) \big) + \int_{0}^{\tau}\mb S(\tau-s)\mb N(\Phi(s))ds.
\end{equation*}
In the sequel, we rely on a fixed point argument to prove the existence of globally small solutions to Eq.~\eqref{Eq:Duhamel_copy}, and for that we also need the following definition
\begin{equation*}
	\mc X_\delta := \{ \Phi \in \mc X: \| \Phi \|_{\mc X} \leq \delta \}.
\end{equation*}
Also, recall definition \eqref{Def:B_delta}.
\begin{proposition}\label{Prop:K}
	For all  sufficiently small $\delta>0$ and all  sufficiently large $C > 0$ the following holds: if ${\bf u} \in \mc B_{\delta/C}$ then there exits a unique $\Phi = \Phi({\bf u}) \in \mc X_\delta$ for which
	\begin{equation}\label{Eq:K(u,phi)}
		\Phi = {\bf K}_{\bf u}( \Phi).
	\end{equation}
	In addition, the map ${\bf u} \mapsto \Phi({\bf u}): \mc B_{\delta/C} \rightarrow \mc X$ is  Lipschitz continuous. 
\end{proposition}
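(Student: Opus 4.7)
The strategy is a standard Lyapunov--Perron fixed point argument, where the role of the correction term $\mb C(\mb u,\Phi)$ is precisely to annihilate the one-dimensional unstable direction of $\mb S(\tau)$ produced by the eigenvalue $\la=1$. The plan is to split $\mb K_{\mb u}(\Phi)(\tau)$ according to the spectral decomposition $\mathcal{H}=\rg\mb P\oplus\ker\mb P$, which reduces $\mb L$, and estimate the two pieces separately using Proposition~\ref{Prop:Semigr_perturbed} and Lemma~\ref{Lem:Nonlin_est}.

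First I would verify the key identity that motivates the definition of $\mb C$. Since $\mb P\mb C(\mb u,\Phi)=\mb C(\mb u,\Phi)$ and $\mb S(\tau)\mb P=e^{\tau}\mb P$ on $\rg\mb P$, a direct computation gives
\begin{equation*}
\mb P\,\mb K_{\mb u}(\Phi)(\tau)= -\int_{\tau}^{\infty} e^{\tau-s}\mb P\,\mb N(\Phi(s))\,ds,
\end{equation*}
so the growing mode is removed and replaced by an integral from $\tau$ to $\infty$. Meanwhile, using $(1-\mb P)\mb C(\mb u,\Phi)=0$ together with Proposition~\ref{Prop:Semigr_perturbed}~iii), I get
\begin{equation*}
\|(1-\mb P)\mb K_{\mb u}(\Phi)(\tau)\|\le Ce^{-\omega\tau}\|\mb u\|+ C\int_{0}^{\tau} e^{-\omega(\tau-s)}\|\mb N(\Phi(s))\|\,ds.
\end{equation*}
By Lemma~\ref{Lem:Nonlin_est} (with $\mb v=0$ and $\mb N(0)=0$), $\|\mb N(\Phi(s))\|\lesssim\|\Phi(s)\|^{2}\le\delta^{2}e^{-2\omega s}$ for $\Phi\in\mc X_{\delta}$. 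Combining these bounds with the elementary integrals $\int_{\tau}^{\infty}e^{\tau-s}e^{-2\omega s}ds\lesssim e^{-2\omega\tau}$ and $\int_{0}^{\tau}e^{-\omega(\tau-s)}e^{-2\omega s}ds\lesssim e^{-\omega\tau}$ yields
\begin{equation*}
\|\mb K_{\mb u}(\Phi)\|_{\mc X}\lesssim \|\mb u\|+\delta^{2}.
\end{equation*}
Choosing $C$ large enough and $\delta$ small enough then forces $\mb K_{\mb u}$ to map $\mc X_{\delta}$ into itself whenever $\mb u\in\mc B_{\delta/C}$.

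For contraction, given $\Phi,\Psi\in\mc X_{\delta}$, Lemma~\ref{Lem:Nonlin_est} gives
\begin{equation*}
\|\mb N(\Phi(s))-\mb N(\Psi(s))\|\lesssim\delta e^{-2\omega s}\|\Phi-\Psi\|_{\mc X}.
\end{equation*}
Since $(1-\mb P)\bigl(\mb C(\mb u,\Phi)-\mb C(\mb u,\Psi)\bigr)=0$, splitting again and integrating as above produces $\|\mb K_{\mb u}(\Phi)-\mb K_{\mb u}(\Psi)\|_{\mc X}\lesssim \delta\|\Phi-\Psi\|_{\mc X}$, so for $\delta$ small we obtain a contraction and the Banach fixed point theorem supplies a unique $\Phi=\Phi(\mb u)\in\mc X_{\delta}$ solving \eqref{Eq:K(u,phi)}. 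For the Lipschitz dependence, I would set $\Delta(\tau):=\Phi(\mb u)(\tau)-\Phi(\tilde{\mb u})(\tau)$, subtract the two fixed point identities, and observe that on $\rg\mb P$ the terms involving $\mb u$ and $\tilde{\mb u}$ cancel against the corresponding parts of the correction, leaving only a nonlinear remainder with decay $e^{-2\omega\tau}$, while on $\ker\mb P$ the leading contribution is $\mb S(\tau)(1-\mb P)(\mb u-\tilde{\mb u})$, controlled by Proposition~\ref{Prop:Semigr_perturbed}~iii). Together these yield $\|\Delta\|_{\mc X}\lesssim \|\mb u-\tilde{\mb u}\|+\delta\|\Delta\|_{\mc X}$, from which Lipschitz continuity follows for $\delta$ small.

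No step presents a serious obstacle; the only subtlety is bookkeeping the cancellation of the unstable direction, and the main technical ingredients (decay on the stable subspace and the quadratic estimate on $\mb N$) are already in hand from Proposition~\ref{Prop:Semigr_perturbed} and Lemma~\ref{Lem:Nonlin_est}.
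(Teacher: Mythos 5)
Your proposal is correct and follows essentially the same route as the paper: the same rewriting of $\mb K_{\mb u}$ through the projection $\mb P$ (so that the unstable mode is replaced by the integral from $\tau$ to $\infty$), the same use of Proposition \ref{Prop:Semigr_perturbed} and Lemma \ref{Lem:Nonlin_est} to get self-mapping and contraction on $\mc X_\delta$, and the Banach fixed point theorem. Your Lipschitz-continuity step (direct subtraction of the two fixed point identities and absorbing the $\delta\|\Delta\|_{\mc X}$ term) is just a cosmetic variant of the paper's triangle-inequality argument using the contraction constant $\tfrac12$.
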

\begin{proof}
	First, note that $\mb K_{\mb u}$ can be written in the following way
	\begin{equation*}
		\mb K_{\mb u}( \Phi)(\tau)= \mb S(\tau)(1- \mb P)\mb u +  \int_{0}^{\tau}\mb S(\tau-s) (1- \mb P)\mb N(\Phi(s))ds - \int_{\tau}^{\infty} e^{\tau- s} \mb P \mb N(\Phi(s))ds.
	\end{equation*}
   Then, from Proposition \ref{Prop:Semigr_perturbed} and Lemma \ref{Lem:Nonlin_est} we get that if $\Phi(s) \in \mc B_\delta$ for all $s \geq 0$ then
   \begin{equation*}\label{Eq:K(u,phi)_2}
    \| \mb K_{\mb u}( \Phi)(\tau) \| \lesssim e^{-\omega \tau}\| \mb u \| + e^{-\omega \tau} \int_{0}^{\tau}e^{\omega s}\| \Phi(s) \|^2ds + e^{\tau}\int_{\tau}^{\infty} e^{-s} \|  \Phi(s)  \|^2 ds.
   \end{equation*}
Furthermore, from this estimate we have that if $ \mb u \in \mc B_{\delta/C}$ and $\Phi \in \mc X_{\delta}$ then
\begin{equation*}
	e^{\omega \tau}\| \mb K_{\mb u}( \Phi)(\tau) \| \lesssim \tfrac{\delta}{C} + \delta^2 + \delta^2 e^{-\omega\tau},
\end{equation*}
where the implied constant can be chosen uniformly for all $\delta$ in a bounded subset of $\R^+$.
Also, in a similar manner we get that
\begin{equation*}
	e^{\omega \tau}\| \mb K_{\mb u}( \Phi)(\tau) - \mb K_{\mb u}( \Psi)(\tau) \| \lesssim (\delta + \delta e^{-\omega \tau})\| \Phi - \Psi \|_{\mc X},
\end{equation*}
for all $\Phi,\Psi \in \mc X_\delta$.
Finally, from the last two displayed equations we see that for all small enough $\delta$ and for all large enough $C$ the operator $\mb K_{\mb u}$ is contractive on $\mc X_\delta$ (with the contraction constant say $\frac{1}{2}$) provided $\mb u \in \mc B_{\delta/C}$, and the existence and uniqueness of solutions to Eq.~\eqref{Eq:K(u,phi)} follows from the Banach fixed point theorem. To prove continuity of the map $\mb u \mapsto \Phi(\mb u) $ we use Proposition \ref{Prop:Semigr_perturbed} and the contractivity of $\mb K_{\mb u}$. Namely, we have the estimate
\begin{align*}
	\| \Phi(\mb u)(\tau) - \Phi(\mb v)(\tau) \| &=
	\| \mb K_{\mb u}( \Phi(\mb u))(\tau) - \mb K_{\mb v}( \Phi(\mb v))(\tau)\| \\
	 &\leq  	\| \mb K_{\mb u}( \Phi(\mb u))(\tau) - \mb K_{\mb v}( \Phi(\mb u))(\tau)\| + \| \mb K_{\mb v}( \Phi(\mb u))(\tau) - \mb K_{\mb v} (\Phi(\mb v))(\tau)\|  \\
	 &\leq \| \mb S(\tau)(1-\mb P)(\mb u-\mb v) \| + \tfrac{1}{2}\| \Phi(\mb u)(\tau) - \Phi(\mb v)(\tau) \| \\ 
	&\leq  Ce^{-\omega\tau} \| \mb u - \mb v \| + \tfrac{1}{2}\| \Phi(\mb u)(\tau) - \Phi(\mb v)(\tau) \|,
\end{align*}
from which the Lipschitz continuity follows.
\end{proof}
 In order to use this result to construct solutions to Eq.~\eqref{Eq:Duhamel_copy} we have to ensure smallness of the initial data operator $\mb U(\mb v, T)$. The following lemma lays out this and a few other properties of $\mb U(\mb v, T)$ we need later on.
\begin{lemma}\label{Lem:U(v,t)}
	For $\delta \in (0,1)$ and ${\bf v} \in \mc H_{1+\delta} $ the map
	\begin{equation*}
		T \mapsto {\bf U}({\bf v},T):[1-\delta,1+\delta] \rightarrow \mc H
	\end{equation*}
	is continuous. In addition, we have that
	\begin{equation}\label{Eq:U(v,T)_est}
		\| {\bf U}({\bf v},T) \| \lesssim  \| {\bf v} \|_{\mc H_{1+\delta}} + |T-1|
	\end{equation}
	for all ${\bf v} \in \mc H_{1+\delta} $ and $T \in [1-\delta,1+\delta]$, where the implicit constant in this estimate can be chosen uniformly for all $\delta$ in a set positively distanced from 1.

\end{lemma}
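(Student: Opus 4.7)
The plan is to decompose $\mb U(\mb v,T)$ into a piece depending linearly on $\mb v$ and a reference piece that quantifies the failure of the static profile to be scale-invariant. Substituting the definition \eqref{Def:InitCond_v} into $\mb U(\mb v,T) = \mb U_0(T) - \Psi_{\text{st}}$, one obtains
\begin{equation*}
	\mb U(\mb v,T) = \big( T^2 v_1(T\cdot),\, T^3 v_2(T\cdot) \big) + \big( T^2 u_1(0,T\cdot) - u_1(0,\cdot),\, T^3 \partial_0 u_1(0,T\cdot) - \partial_0 u_1(0,\cdot) \big),
\end{equation*}
where $v_1, v_2$ denote the components of $\mb v$.

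The first (``rescaled perturbation'') piece I would handle by a standard change-of-variables argument in radial Sobolev spaces: for $k \in \mathbb{N}_0$,
\begin{equation*}
	\| f(T\cdot) \|_{H^k_r(\B^{d+2})}^2 \simeq \sum_{j=0}^{k} T^{2j - (d+2)} \| f \|_{\dot H^j_r(\B^{d+2}_T)}^2,
\end{equation*}
so that for $T \in [1-\delta, 1+\delta]$ with $\delta$ positively distanced from $1$ the powers of $T$ are uniformly controlled, and the inclusion $\B^{d+2}_T \subseteq \B^{d+2}_{1+\delta}$ gives $\| ( T^2 v_1(T\cdot),\, T^3 v_2(T\cdot) ) \|_{\mc H} \lesssim \| \mb v \|_{\mc H_{1+\delta}}$ with uniform implicit constant. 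The second (``reference'') piece involves only the smooth rational profiles $u_1(0,\cdot)$ and $\partial_0 u_1(0,\cdot)$, which are smooth on any bounded interval since $\beta(d) > 0$. By direct inspection (or the fundamental theorem of calculus in $T$) this piece vanishes at $T=1$ and has $\mc H$-norm of order $|T-1|$, uniformly in $\delta$. Combining the two estimates yields \eqref{Eq:U(v,T)_est}.

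Continuity of $T \mapsto \mb U(\mb v,T)$ I would handle one piece at a time. The reference piece is manifestly smooth in $T$. For the rescaled perturbation, continuity in $T$ is elementary when $v_1, v_2 \in C^\infty_e[0,1+\delta]$ (via the chain rule and dominated convergence in the integrals defining the Sobolev norms). For general $\mb v \in \mc H_{1+\delta}$, I would approximate by a sequence in $C^\infty_e[0,1+\delta]^2$ (dense by mollification, as in the remark after Lemma \ref{Lem:C_even}) and pass to the limit using the uniform bound above via a standard $3\varepsilon$ argument. No step presents a substantial obstacle; the only care required is to track that all implicit constants remain uniform in $\delta$ away from $1$, which is automatic from the explicit $T$-dependence in the scaling identity.
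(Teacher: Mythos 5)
Your proposal is correct and follows essentially the same route as the paper: the decomposition of $\mb U(\mb v,T)$ into the rescaled perturbation $(T^2 v_1(T\cdot),\,T^3 v_2(T\cdot))$ plus the smooth reference piece (the paper's Eq.~\eqref{Eq:U(v,T)}), uniform-in-$T$ scaling bounds for the estimate, and density of $C^\infty_e$ together with a $3\varepsilon$ argument for continuity of the dilations. The only (cosmetic) difference is that the paper runs the approximation argument directly on $u_0(T\cdot)$ rather than on the two pieces separately.
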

\begin{proof}
	Fix $\delta \in (0,1)$ and $\mb v \in \mc H_{1+\delta}$. Then for $T,S \in [1-\delta,1+\delta]$ we have that
	\begin{equation}\label{Eq:T-S}
		 [\mb U(\mb v,T)]_1 - [\mb U(\mb v,S)]_1 = (T^2-S^2)u_0(T\cdot) + S^2 \big( u_0(T\cdot) - u_0(S \cdot) \big). 
	\end{equation}
	Let $\varepsilon >0$. Then there exists $\tilde{u}_0 \in C^\infty_e[0,1+\delta]$ for which $\| u_0(|\cdot|) - \tilde{u}_0(|\cdot|) \|_{ H^{k_d}(\B_{1+\delta}^{d+2})} < \varepsilon$. Then, by writing
	\begin{equation*}
		u_0(T\cdot) - u_0(S \cdot) = \big ( u_0(T\cdot)-\tilde{u}_0(T\cdot) \big) + \big( \tilde{u}_0(T\cdot) - \tilde{u}_0(S\cdot) \big) + \big( \tilde{u}_0(S\cdot) - u_0(S\cdot) \big)
	\end{equation*}
	and using the fact that 
	$\lim_{S \rightarrow T} \| \tilde{u}_0(T|\cdot|) - \tilde{u}_0(S|\cdot|)  \|_{ H^{k_d}(\B^{d+2})}=0,
	$
	from Eq.~\eqref{Eq:T-S} we see that 
	\begin{equation*}
		\lim_{S \rightarrow T} \|[\mb U(\mb v,T)]_1(|\cdot|) - [\mb U(\mb v,S)]_1(|\cdot|) \|_{H^{k_d}(\B^{d+2})} \lesssim \varepsilon;
	\end{equation*}
	the continuity follows by letting $\varepsilon \rightarrow 0$. For the second part, we write  $\mb U(\mb v,T)$ in the following way
	\begin{equation}\label{Eq:U(v,T)}
		\mb U(\mb v,T)=
		\begin{pmatrix}
			T^2v_1(T\cdot)\\
			T^3 v_2(T\cdot)
		\end{pmatrix}+ 
		\begin{pmatrix}
			T^2u_1(0,T\cdot)-u_1(0,\cdot) \\
			T^3 \partial_0u_1(0,T\cdot) - \partial_0 u_1(0,\cdot)
		\end{pmatrix},
	\end{equation}
and from this the estimate \eqref{Eq:U(v,T)_est} follows.
\end{proof}
 With these preparations at hand, we are ready to formulate and prove the main result of this section.
\begin{theorem}\label{Thm:CoMain}
	There  exist $\delta,N>0$ such that the following holds: if
	\begin{equation}\label{Eq:v_smallness}
		{\bf v} \in \mc H_{1+\delta/N}, \quad \text{the components of {\bf v} are  real-valued}, \quad \text{and} \quad \| {\bf v} \|_{\mc H_{1+\delta/N}} \leq \tfrac{\delta}{N^2},
	\end{equation}
	 then there exist $T \in [1-\frac{\delta}{N}, 1+\frac{\delta}{N}]$ and $ \Phi \in \mc X_{\delta}$ such that Eq.~\eqref{Eq:Duhamel_copy} holds for all $\tau \geq 0.$ Furthermore, such $\Phi$ yields via \eqref{Def:Pert_ansatz} a solution to Eq.~\eqref{Eq:Duhamel0} in $C([0,\infty),\mc H)$.
\end{theorem}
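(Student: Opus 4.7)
The plan is a Lyapunov--Perron/modulation argument: Proposition \ref{Prop:K} yields, for each admissible initial data $\mb u$, a solution $\Phi(\mb u)\in\mc X_\delta$ of the modified equation \eqref{Eq:K(u,phi)}; choosing $\mb u=\mb U(\mb v,T)$ and treating the blowup time $T$ as a free parameter, I will tune $T$ so that the correction term $\mb C(\mb U(\mb v,T),\Phi(\mb U(\mb v,T)))$ vanishes, whereupon \eqref{Eq:K(u,phi)} reduces to \eqref{Eq:Duhamel_copy}. Since $\rg\mb P=\langle\mb g\rangle$ is one-dimensional by Proposition \ref{Prop:Semigr_perturbed}(i), this is a single scalar equation in the single scalar parameter $T$.

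Fix $\mb v$ satisfying \eqref{Eq:v_smallness}. Lemma \ref{Lem:U(v,t)} yields
\[
\|\mb U(\mb v,T)\|_\mc H \;\lesssim\; \|\mb v\|_{\mc H_{1+\delta/N}}+|T-1| \;\lesssim\; \delta/N
\]
uniformly for $T\in[1-\delta/N,1+\delta/N]$, so for $N$ large enough the data lies in the ball $\mc B_{\delta/C}$ of Proposition \ref{Prop:K}. This produces $\Phi_T:=\Phi(\mb U(\mb v,T))\in\mc X_\delta$, and the Lipschitz continuity from Proposition \ref{Prop:K} combined with the continuity of $T\mapsto\mb U(\mb v,T)$ shows $T\mapsto\Phi_T$ is continuous. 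Writing $\mb C(\mb U(\mb v,T),\Phi_T)=F(T)\mb g$ reduces the task to finding a zero of the continuous scalar function $F$. That $F$ is real-valued follows from the hypothesis that $\mb v$ has real components, together with the fact that $\mb g$ is real and $\mb L_0$, $\mb L'$, hence $\mb S(\tau)$, $\mb P$ and $\mb N$, all preserve the real subspace of $\mc H$.

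The key step is isolating the leading $T$-dependence of $F$. A direct computation from the explicit formulas for $\Psi_{\text{st}}$, for $\mb g=(g_1,\rho g_1'+3g_1)$ with $g_1$ as in \eqref{Def:g}, and from the definition \eqref{Def:Initial_data_operator}, gives the identity
\[
\left.\frac{d}{dT}\right|_{T=1}\mb U(\mb 0,T) \;=\; 2\alpha(d)\beta(d)\,\mb g,
\]
which is precisely the time-translation symmetry generating the $\la=1$ eigenfunction. Expanding around $T=1$ and using $\mb P\mb g=\mb g$ yields
\[
\mb P\mb U(\mb v,T) \;=\; \bigl[\,2\alpha(d)\beta(d)(T-1)+\mathcal E_1\,\bigr]\mb g,\qquad |\mathcal E_1|\lesssim \|\mb v\|_\mc H+(T-1)^2 \lesssim \delta/N^2,
\]
while Lemma \ref{Lem:Nonlin_est} together with $\mb N(\mb 0)=\mb 0$ and $\Phi_T\in\mc X_\delta$ bound the nonlinear contribution to $\mb C$ by $\|\Phi_T\|_\mc X^2\lesssim\delta^2$. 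Consequently
\[
F(T) \;=\; c(T-1) \;+\; O(\delta/N^2+\delta^2),\qquad c\neq 0,
\]
where $c$ is a fixed nonzero multiple of $2\alpha(d)\beta(d)$. Choosing first $N$ large so that $\delta/N^2\ll c\delta/N$, and then $\delta$ small so that $\delta^2\ll c\delta/N$, the endpoint values $F(1\pm\delta/N)$ carry opposite signs, and the intermediate value theorem produces a root $T_\star\in[1-\delta/N,1+\delta/N]$. Setting $\Phi:=\Phi_{T_\star}\in\mc X_\delta\subset C([0,\infty),\mc H)$ makes \eqref{Eq:Duhamel_copy} hold for all $\tau\geq 0$; \eqref{Def:Pert_ansatz} then gives the corresponding solution to \eqref{Eq:Duhamel0}. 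The main obstacle is the derivative identity $\partial_T\mb U(\mb 0,T)\big|_{T=1}\propto\mb g$, which is what makes the single-parameter modulation cancel the single-dimensional instability; it is essentially an algebraic consequence of the time-translation invariance of \eqref{Eq:YM_equiv} and the explicit form of $u_T$.
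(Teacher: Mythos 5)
Your proposal is correct and takes essentially the same route as the paper: apply Proposition \ref{Prop:K} with $\mathbf{u}=\mathbf{U}(\mathbf{v},T)$, use the symmetry identity $\partial_T \mathbf{U}(\mathbf{0},T)\big|_{T=1}=2\alpha(d)\beta(d)\,\mathbf{g}$ (which the paper records as $\alpha\,\mathbf{g}$ with $\alpha>0$) to extract the leading $(T-1)$-linear behavior of the projected correction term, and then solve the resulting scalar equation by a one-dimensional topological argument—your intermediate value theorem is the paper's Brouwer fixed point step in disguise. The only difference is that the paper additionally carries out the short variation-of-constants computation showing that $\Psi_{\text{st}}+\Phi$ indeed solves \eqref{Eq:Duhamel0}, which you assert without detail.
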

\begin{proof}
	For simplicity, set $\delta':=\delta/N$. Based on Lemma \ref{Lem:U(v,t)} and Proposition \ref{Prop:K} we infer that given small enough $\delta$ and large enough $N$, if $\mb v$ satisfies \eqref{Eq:v_smallness} and $T \in [1-\delta' , 1+\delta']$ then there is a unique $\Phi=\Phi({\mb v},T) \in \mc X_\delta $ that solves
	\begin{equation}\label{Eq:Duhamel_C}
		\Phi(\tau) =\mb S(\tau)\big(\mb U(\mb v,T) - \mb C(\mb U(\mb v,T), \Phi ) \big) + \int_{0}^{\tau}\mb S(\tau-s)\mb N\big(\Phi (s)\big)ds.
	\end{equation}
	Furthermore, since $\mb v$ is real-valued then so is $\Phi(\tau)$ for all $\tau \geq 0$. Then, to construct solutions to Eq.~\eqref{Eq:Duhamel_copy} we prove that there is a choice of $\delta$ and $N$ such that for any $\mb v$ that satisfies \eqref{Eq:v_smallness} there is $T=T(\mb v) \in[1-\delta', 1+\delta']$ for which
	the correction term in Eq.~\eqref{Eq:Duhamel_C} vanishes.
	%
	Since $\mb C$ takes values in $\rg \mb P = \langle \mb g \rangle$ it is enough to prove the existence of $T$ for which
		\begin{equation}\label{Eq:Fixed_pt}
	\big( \mb C(\mb U(\mb v,T), \Phi({\mb v},T))\, \big| \, \mb g \big)_{\mc H}=0.
	\end{equation} 
	To this end, we consider the real function $T \mapsto \big( \mb C(\mb U(\mb v,T), \Phi(\mb v,T))\, \big| \, \mb g \big)_{\mc H} $ and aim at using  Brouwer's fixed point theorem to prove that it vanishes on $[1-\delta', 1+\delta']$. The crucial observation is that
	\begin{equation*}
		\partial_T
		\begin{pmatrix}
			T^2u_1(0,T\cdot) \\
			T^3 \partial_0u_1(0,T\cdot)
		\end{pmatrix} \bigg|_{T=1} 
	=  \alpha  \mb g,
	\end{equation*}
	for some $\alpha >0$. Based on this, from \eqref{Eq:U(v,T)}, we get that 
	\begin{equation*}
		\big (\mb P\mb U(\mb v,T)\, \big| \, \mb g \big)_{\mc H}= \alpha(T-1)\| \mb g\|^2+R_1(\mb v,T),
	\end{equation*}
	where  $R_1(\mb v,T)$ is continuous in $T$ and $R_1(\mb v,T) \lesssim \delta/N^2$.
    Based on the definition of $\mb C$ we similarly conclude that
	\begin{equation*}
	\big( \mb C(\mb U(\mb v,T), \Phi(\mb v,T))\, \big| \, \mb g \big)_{\mc H}= \alpha (T-1)\| \mb g\|^2+ R_2(\mb v , T),
	\end{equation*}
 where $R_2(\mb v,T) \lesssim \delta/N^2 + \delta^2.$
	Subsequently, there is a sufficiently large $N$ and sufficiently small $\delta$ for which $|R_2(\mb v,T)| \leq \alpha \|\mb g \|^2 \delta'$. In that case Eq.~\eqref{Eq:Fixed_pt} becomes equivalent to
	\begin{equation}\label{Eq:T}
		T=F(T)
	\end{equation}
	for some continuous function $F$ which maps $[1-\delta',1+\delta']$ onto itself. Therefore, by Brouwer's fixed point theorem we infer the existence of $T \in [1-\delta',1+\delta']$ for which \eqref{Eq:T} (and therefore \eqref{Eq:Fixed_pt}) holds. Finally, after substituting the variation of constants formula
	\begin{equation*}
		\mb S(\tau)= \mb S_0(\tau) + \int_{0}^{\tau}\mb S_0(\tau-s)\mb L' \mb S(s)ds
	\end{equation*}
	in Eq.~\eqref{Eq:Duhamel_copy}, direct calculation yields that $\Psi:=\Psi_{\text{st}}+\Phi$ solves Eq.~\eqref{Eq:Duhamel0}.
\end{proof}
Finally, we prove the main theorem of the paper.
\begin{proof}[Proof of Theorem \ref{Thm:Main}]
	Due to equivalence \eqref{Eq:Equiv_1form}, we can choose $M$ large enough so that
	\begin{equation*}
		 \| A[0]-A_1[0] \|_{H^{k_d} \times H^{k_d-1}(\B^{d}_{1+\delta})} \leq \tfrac{\delta}{MN}
	\end{equation*} 
implies
	\begin{equation*}
		\| u[0]-u_1[0] \|_{H^{k_d} \times H^{k_d-1}(\B^{d+2}_{1+\delta})} \leq \tfrac{\delta}{N^2}  
	\end{equation*}
for $N$ and $\delta$ from Theorem \ref{Thm:CoMain}. We then let $\varepsilon:= \delta/N$. Now, for $\mb v=u[0]-u_1[0]$ (recall definition \eqref{Def:InitCond_v}) Theorem \ref{Thm:CoMain} guaranties the existence of $T \in [1-\varepsilon,1+\varepsilon]$ for which Eq.~\eqref{Eq:Duhamel0} has a solution $\Psi$ in $C([0,\infty),\mc H)$. This, according to Sec.~\ref{Subsec:Well-posedness}, implies the existence of a unique strong lightcone solution $A$ to \eqref{Eq:YM_general}-\eqref{Eq:YM_gen_init_cond} on $\Gamma_T$, which blows up at the origin at time $T$. What is more, for $i \in \{ 1,2,\dots,d\}$ and $k \in \{1,\dots, k_d\}$ we have that
	\begin{align*}
		(T-t)^{k+1-\frac{d}{2}} & \|(\cdot)_i \big(u(t,\cdot)-u_T(t,\cdot)\big) \|_{\dot{H}^k(\B^d_{T-t})} \\
		&= (T-t)^{k-1-\frac{d}{2}} \left\| (\cdot)_i \left(\psi \big(\tau(t),\tfrac{|\cdot|}{T-t} \big) - [\Psi_{\text{st}}]_1 \big(\tfrac{|\cdot|}{T-t} \big) \right) \right\|_{\dot{H}^k(\B^d_{T-t})} \\
		& = \| (\cdot)_i \left(\psi \big(\tau(t),|\cdot| \big) - [\Psi_{\text{st}}]_1(|\cdot|) \right) \|_{\dot{H}^k(\B^d)} = \| (\cdot)_i \phi_1 \big(\tau(t),|\cdot| \big) \|_{\dot{H}^k(\B^d)}  \\
		& \lesssim \| (\cdot)_i \phi_1 \big(\tau(t),|\cdot| \big) \|_{H^k(\B^d)}   \lesssim  \| \phi_1 \big(\tau(t),|\cdot| \big)\|_{H^k(\B^{d+2})}   \\
		& \lesssim  \| \phi_1 \big(\tau(t),|\cdot| \big)\|_{H^{k_d}(\B^{d+2})} \lesssim \| \Phi(\tau(t)) \| \lesssim  (T-t)^\omega,
	\end{align*}
where we used Proposition \ref{Prop:Equiv_unit_ball} in the next to the last line.
From this estimate and the fact that 
\begin{equation*}
\| A(t,\cdot)-A_T(t,\cdot) \|_{\dot{H}^k(\B^d_{T-t})} = (d-1) \sum_{i=1}^{d} \|(\cdot)_i \big(u(t,\cdot)-u_T(t,\cdot)\big) \|_{\dot{H}^k(\B^d_{T-t})},
\end{equation*}
we conclude that
\begin{equation*}
	(T-t)^{k+1-\frac{d}{2}}\| A(t,\cdot)-A_T(t,\cdot) \|_{\dot{H}^k(\B^d_{T-t})} \lesssim  (T-t)^\omega.
\end{equation*}
By the same  procedure, we get that
\begin{equation*}
	(T-t)^{k+1-\frac{d}{2}}\| \partial_tA(t,\cdot)- \partial_tA_T(t,\cdot) \|_{\dot{H}^{k-1}(\B^d_{T-t})} \lesssim  (T-t)^\omega.
\end{equation*}
 Finally, the last two displayed equations and the fact that $\| A_T[t] \|_{\dot{H}^k\times\dot{H}^{k-1}(\B^d_{T-t})} \simeq (T-t)^{\frac{d}{2}-1-k}$
 imply Eqs.~\eqref{Eq:MainThmEst} and \eqref{Eq:MainThmEst_fullnorm}.
\end{proof}

\appendix

\section{Sobolev norms of corotational maps on balls}\label{App:SobolevCorot}

\noindent Throughout this section, there is no restriction on the dimension $d$; it can be any positive integer. In what follows, we establish equivalence results for Sobolev norms of certain geometrically relevant maps, and, as a consequence, we obtain a proof of Proposition \ref{Prop:Equiv_1forms}. 

 Given $R>0$ we call a map $U: \B_R^d \rightarrow \C^d$ \emph{corotational} if there exists $u:[0,R) \rightarrow \C$  such that the coordinate functions of $U$ satisfy
\begin{equation}\label{Def:Corot_maps}
U_i(x):=x_iu(|x|), \quad x \in \B_R^d.
\end{equation}
Also, we call $u$ the \emph{radial profile of} $U$. Before we state a result about corotational maps, we prove an important auxiliary result.
\begin{lemma}\label{Lem:MainEquivBall}
	Let $k$ be a non-negative integer. If $k<\frac{d}{2}+1$ then
	\begin{equation*}
	\| u(|\cdot|) \|_{H^k(\B^d)} \simeq \sum_{n=0}^{k} \| (\cdot)^{\frac{d-1}{2}}u^{(n)} \|_{L^2(0,1)}
	\end{equation*}
	for all $u \in C^{\infty}_e[0,1]$.
\end{lemma}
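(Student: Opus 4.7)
The plan is to follow the same blueprint as the proof of Lemma~\ref{Lem:Equiv(1)}, but exploit the extended range $k < \tfrac{d}{2}+1$ to replace the sliding weight $(\cdot)^{n+\frac{d-1}{2}-k}$ by the uniform natural weight $(\cdot)^{\frac{d-1}{2}}$.

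For the lower bound $\|u(|\cdot|)\|_{H^k(\B^d)} \gtrsim \sum_{n=0}^{k} \|(\cdot)^{\frac{d-1}{2}} u^{(n)}\|_{L^2(0,1)}$, I would reuse the pointwise inequality $\sum_{|\alpha|=n}|\partial_x^{\alpha} u(|x|)|^2 \gtrsim |u^{(n)}(|x|)|^2$ already established in the proof of Lemma~\ref{Lem:Equiv(1)}. Integrating in polar coordinates immediately yields $\|u(|\cdot|)\|_{\dot{H}^n(\B^d)} \gtrsim \|(\cdot)^{\frac{d-1}{2}} u^{(n)}\|_{L^2(0,1)}$, and summing over $n=0,1,\dots,k$ gives the claim.

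For the upper bound, I would start from the companion estimate $\sum_{|\alpha|=n}|\partial_x^{\alpha} u(|x|)|^2 \lesssim \sum_{j=1}^{n} |x|^{2(j-n)} |u^{(j)}(|x|)|^2$ (for $n \geq 1$), also shown in the proof of Lemma~\ref{Lem:Equiv(1)}. Integration in polar coordinates gives
\[
\| u(|\cdot|)\|_{\dot{H}^n(\B^d)}^2 \lesssim \sum_{j=1}^{n} \|(\cdot)^{\frac{d-1}{2}+j-n} u^{(j)}\|_{L^2(0,1)}^2.
\]
The main task is then to upgrade each weighted norm on the right to the uniform weight $\tfrac{d-1}{2}$. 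The point is that for $1 \leq j \leq n \leq k < \tfrac{d}{2}+1$ the exponent $\tfrac{d-1}{2}+j-n$ is at least $\tfrac{d-1}{2}+1-k > -\tfrac{1}{2}$, which is exactly the threshold required by Hardy's inequality \eqref{Eq:Hardy(1)}. Iterating \eqref{Eq:Hardy(1)} exactly $n-j$ times (and noting the threshold $\alpha > -\tfrac{1}{2}$ is only improved at each iteration) moves the derivative order from $j$ to $n$ and raises the exponent from $\tfrac{d-1}{2}+j-n$ up to $\tfrac{d-1}{2}$, at the cost of accumulating the boundary terms $|u^{(i)}(1)|$ for $i=j,\ldots,n-1$. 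Each such boundary term is then absorbed using \eqref{Eq:Hardy(2)} with $\beta = \tfrac{d-1}{2}$, yielding $|u^{(i)}(1)| \lesssim \|(\cdot)^{\frac{d-1}{2}} u^{(i)}\|_{L^2(0,1)} + \|(\cdot)^{\frac{d-1}{2}} u^{(i+1)}\|_{L^2(0,1)}$ (after trivially dropping a factor of $(\cdot)$ inside $L^2(0,1)$). Summing over $n$ concludes the argument.

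The principal subtlety is that the endpoint condition $k < \tfrac{d}{2}+1$ is sharp for this approach: it is precisely what keeps the worst exponent $\tfrac{d-1}{2}+1-k$ strictly above $-\tfrac{1}{2}$ and thus keeps Hardy's inequality available at every step of the iteration. Notably, the even symmetry of $u \in C^\infty_{e}[0,1]$ is \emph{not} needed here, since the weaker boundary-term form \eqref{Eq:Hardy(1)} (combined with \eqref{Eq:Hardy(2)}) already suffices; the stronger inequality \eqref{Eq:Hardy(3)}, which \emph{would} require vanishing of $u^{(j)}$ at the origin, is circumvented entirely.
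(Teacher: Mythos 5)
Your proposal is correct and follows essentially the same route as the paper's own proof: the lower bound via the pointwise estimate \eqref{Eq:DerSquared}, and the upper bound via \eqref{Eq:DerSquared2} followed by iterated use of Hardy's inequality \eqref{Eq:Hardy(1)} (where $k<\tfrac{d}{2}+1$ keeps the exponents above $-\tfrac12$) with the boundary terms $|u^{(i)}(1)|$ absorbed through \eqref{Eq:Hardy(2)}. Your observation that evenness of $u$ (and hence \eqref{Eq:Hardy(3)}) is not needed matches the paper's argument as well.
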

\begin{proof}
	The ``$\gtrsim$" estimate follows from inequality \eqref{Eq:DerSquared}. The other direction is a consequence of the estimate \eqref{Eq:DerSquared2} and Lemma \ref{Lem:Hardy_on_[0,1]}. Indeed, for $k=0$ the assertion is obvious, while for $k \geq 1$ and $m \in \{ 1,\dots, k \}$ we have
	\begin{align*}
	\| u(|\cdot|) \|_{\dot{H}^m(\B^{d})} &= \sum_{|\alpha|=m}\| \partial^{\alpha}u(|\cdot|) \|^2_{L^2(\B^d)}  \lesssim \sum_{n=1}^{m}\| (\cdot)^{\frac{d-1}{2}+n-m}u^{(n)} \|^2_{L^2(0,1)} \\
	&\lesssim
	\sum_{n=1}^{m} \Big(  \sum_{i=n}^{m-1}|u^{(i)}(1)| + \|(\cdot)^{\frac{d-1}{2}}u^{(m)} \|_{L^2(0,1)} \Big)\\
	&\lesssim
	\sum_{n=1}^{m} \Big(  \sum_{i=n}^{m-1} \big(\| (\cdot)^{\frac{d-1}{2}}u^{(i)}\|_{L^2(0,1)} + \| (\cdot)^{\frac{d+1}{2}}u^{(i+1)}\|_{L^2(0,1)} \big) + \|(\cdot)^{\frac{d-1}{2}}u^{(m)} \|_{L^2(0,1)} \Big)\\
	& \lesssim
	\sum_{n=1}^{m}\|(\cdot)^{\frac{d-1}{2}}u^{(n)} \|_{L^2(0,1)},
	\end{align*}
	and by summing over $m$ we get the desired estimate.
\end{proof}
 The following proposition is the central result of this section; it relates Sobolev norms of corotational maps with the norms of their radial profiles.
\begin{proposition}\label{Prop:Equiv_unit_ball}
 Recall definition \eqref{Def:Corot_maps}. Let $k$ be a non-negative integer. If $k < \frac{d}{2}+2$ then
	\begin{equation*}
	\| U \|^2_{H^k(\B^d)} \simeq \| u(|\cdot|) \|^2_{H^k(\B^{d+2})}	
	\end{equation*}
	for all $u \in C^{\infty}_e[0,1].$
\end{proposition}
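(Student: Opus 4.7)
The plan is to first reduce the problem to a weighted one-dimensional estimate via Lemma~\ref{Lem:MainEquivBall}, and then establish the reduced claim by a direct Leibniz computation combined with Hardy-type inequalities. Since the hypothesis $k<d/2+2$ coincides exactly with $k<(d+2)/2+1$, Lemma~\ref{Lem:MainEquivBall} applied in dimension $d+2$ converts the right-hand side of the claim into
\[
\|u(|\cdot|)\|^2_{H^k(\B^{d+2})} \simeq \sum_{n=0}^{k}\int_{0}^{1} r^{d+1}|u^{(n)}(r)|^2\,dr,
\]
so it suffices to show that $\|U\|^2_{H^k(\B^d)}$ is comparable to the same weighted sum.

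For the upper bound, the key computational input is the Leibniz identity
\[
\partial^{\alpha}U_i(x) = x_i\,\partial^{\alpha}u(|x|)+\alpha_i\,\partial^{\alpha-e_i}u(|x|),
\]
where the second term is understood as zero when $\alpha_i=0$. Squaring, summing over $i$ and $|\alpha|=n$, and using the pointwise estimate~\eqref{Eq:DerSquared2} followed by passage to polar coordinates controls $\|U\|^2_{\dot H^n(\B^d)}$ by $\sum_{j=1}^n \int_{0}^{1} r^{d+1-2(n-j)}|u^{(j)}(r)|^2\,dr$ plus an $\dot H^{n-1}(\B^d)$ tail for $u(|\cdot|)$. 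The top-order term ($j=n$) already matches the target, while the lower-order terms (carrying ``wrong'' weights $r^{d+1-2m}$ for $m\geq 1$) and the tail would be absorbed into $\sum_m\int r^{d+1}|u^{(m)}|^2\,dr$ by iterating the Hardy-type inequalities~\eqref{Eq:Hardy(1)} and~\eqref{Eq:Hardy(2)}; in particular, invoking~\eqref{Eq:Hardy(2)} with $\beta=(d+1)/2$ and using $r^{d+3}\leq r^{d+1}$ on $[0,1]$ bounds the boundary traces $|u^{(j)}(1)|^2$ by $\int r^{d+1}(|u^{(j)}|^2+|u^{(j+1)}|^2)\,dr$, which is the correct weight to close the induction.

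For the reverse inequality I would exploit the rotational equivariance of the vector field $U$: since $U(Rx)=R\,U(x)$ for $R\in SO(d)$, the scalar quantity $\sum_i\sum_{|\alpha|=n}|\partial^{\alpha}U_i(x)|^2$ is invariant under rotations of $x$, hence a radial function of $|x|$, and may be evaluated at the distinguished point $x=re_1$. At this point the Leibniz identity collapses---using the elementary observation that $\partial_1^m u(|\cdot|)\big|_{x=re_1}=u^{(m)}(r)$---to the explicit expression $\partial_1^n U_1(re_1)=r u^{(n)}(r)+n u^{(n-1)}(r)$, which via~\eqref{Eq:DerSquared} furnishes the pointwise lower bound on the integrand. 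Integrating in polar with the weight $r^{d-1}$, expanding the square, handling the cross term $2n\int r^{d}\Re(u^{(n)}\overline{u^{(n-1)}})\,dr$ by partial integration, and inducting on $n$ to reabsorb the remaining lower-order contributions (again via~\eqref{Eq:Hardy(1)}--\eqref{Eq:Hardy(2)}) completes the lower estimate.

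The principal obstacle is that the range $k<d/2+2$ extends one full derivative beyond the range $k<d/2+1$ in which Lemma~\ref{Lem:MainEquivBall} directly characterizes $\|u(|\cdot|)\|_{H^k(\B^d)}$; in particular, the naive product bound $\|U_i\|_{H^k(\B^d)}\lesssim\|x_i\|_{W^{k,\infty}}\|u(|\cdot|)\|_{H^k(\B^d)}$ does not yield the correct dimensional gain. The ``extra'' derivative of regularity is gained because the equivariant ansatz $U_i(x)=x_i u(|x|)$ builds in the vanishing $U(0)=0$, effectively lifting $U$ one dimension up the Sobolev scale, and the Hardy-based reabsorption above is precisely the mechanism that records this gain; keeping all implicit constants uniform throughout the induction on $n$ is the main bookkeeping hurdle.
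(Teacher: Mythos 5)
Your reduction via Lemma \ref{Lem:MainEquivBall} applied in dimension $d+2$ (which is exactly where the hypothesis $k<\frac{d}{2}+2$ enters) and your upper bound are fine, and in fact essentially reproduce the paper's argument: the paper also starts from the Leibniz identity and absorbs the mis-weighted and lower-order terms through \eqref{Eq:DerSquared2}, Lemma \ref{Lem:Hardy_on_[0,1]} and Lemma \ref{Lem:MainEquivBall} (its estimates \eqref{Eq:Est1}--\eqref{Eq:Est3}).

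The lower bound, however, has a genuine gap. By equivariance (strictly, the rotation-invariant quantity carries the weights $n!/\alpha!$, but that is harmless) you retain only the single entry $\partial_1^nU_1$ on the ray $x=re_1$, i.e.\ the quantity $\int_0^1 r^{d-1}|ru^{(n)}+nu^{(n-1)}|^2\,dr$. Expanding the square and integrating the cross term by parts gives exactly
\begin{equation*}
\int_0^1 r^{d-1}\big|ru^{(n)}+nu^{(n-1)}\big|^2dr
=\int_0^1 r^{d+1}|u^{(n)}|^2dr+n\,|u^{(n-1)}(1)|^2-n(d-n)\int_0^1 r^{d-1}|u^{(n-1)}|^2dr,
\end{equation*}
and for $n<d$ the last term is negative and must be absorbed. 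But \eqref{Eq:Hardy(1)}--\eqref{Eq:Hardy(2)} run the wrong way here: they give $\int_0^1 r^{d-1}|u^{(n-1)}|^2dr\lesssim |u^{(n-1)}(1)|^2+\int_0^1 r^{d+1}|u^{(n)}|^2dr$ with a Hardy constant of size $4/d^2$, while the prefactor $n(d-n)$ can be as large as $d^2/4$ (and $n$ up to roughly $d/2+1$ is allowed by the hypothesis). So the negative contribution is of exactly the same size as the single positive term $\int r^{d+1}|u^{(n)}|^2$ you kept; there is no quantitative smallness, and the induction on $n$ you invoke does not close. This is precisely the cancellation the paper's proof is built to preserve: keeping all components $i$ and all multi-indices $\alpha$ with the weights $n!/\alpha!$, the correction terms in \eqref{Eq:Corot(1)} become, after the divergence theorem, $(n-1)\|\partial^{\beta}v\|^2_{L^2(\B^d)}+\|\partial^{\beta}v\|^2_{L^2(\mathbb{S}^{d-1})}\geq 0$ and can simply be dropped, whereas restricting to one ray discards exactly the terms that offset the $-d\|\partial^{\beta}v\|^2_{L^2(\B^d)}$ produced by the integration by parts. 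Your restricted inequality may well still be true---note $ru^{(n)}+nu^{(n-1)}=(ru)^{(n)}$, so you are asking that weighted norms of $v=ru$ control those of $u$, which one could try to prove with Hardy inequalities of the type \eqref{Eq:Hardy(3)} exploiting the vanishing of $v$ and its relevant combinations at the origin---but that is a different and more delicate argument than the one sketched, and as written the absorption step is exactly where the difficulty is hidden. (Incidentally, \eqref{Eq:DerSquared} is not the relevant tool at that point: it concerns radial scalar functions; dropping all but one term of the sum is what you actually use.)
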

\begin{proof}
	Let $v \in C^\infty(\overline{\B^d})$. Since $\partial^\alpha_x(x_iv(x))=x_i \partial^{\alpha}v(x)+\alpha_i \partial^{\alpha-e_i}v(x)$, for a non-negative integer $n$ we have
	\begin{multline}\label{Eq:Corot(1)}
	\sum_{i=1}^{d} \sum_{|\alpha|=n} \frac{n!}{\alpha!} | \partial^{\alpha}_x(x_iv(x)) |^2 
	=  \sum_{|\alpha|=n} \frac{n!}{\alpha!} |x|^2 |\partial^{\alpha}v(x) |^2 \\
	+ \sum_{|\beta|=n-1}\frac{n!}{\beta!}\left((n-1+d)
	|\partial^{\beta}v(x)|^2 + x \cdot \nabla_x|\partial^\beta v(x)|^2 \right).
	\end{multline}
	Also, note that by divergence theorem we have that
	\begin{equation*}
		\| \partial^{\beta}v \|_{L^2(\mathbb{S}^{d-1})}^2 = \int_{\B^d}\nabla_x \cdot \big(x|\partial^{\beta}v(x)|^2\big) dx = \int_{\B^d} \big(d |\partial^{\beta}v(x)|^2 +  x \cdot \nabla_x|\partial^\beta v(x)|^2\big)dx.
	\end{equation*}
	Therefore, by integrating Eq.~\eqref{Eq:Corot(1)} over $\B^d$ we have
	\begin{multline*}
	\sum_{i=1}^{d} \sum_{|\alpha|=n} \frac{n!}{\alpha!} \| \partial^\alpha ((\cdot)_iv) \|_{L^2(\B^d)}^2 
	= \sum_{|\alpha|=n} \frac{n!}{\alpha!} \| |\cdot|\partial^\alpha v \|_{L^2(\B^d)}^2 \\
	+ \sum_{|\beta|=n-1}\frac{n!}{\beta!} \left((n-1)\| \partial^{\beta}v \|_{L^2(\B^d)}^2 + \| \partial^{\beta}v \|_{L^2(\mathbb{S}^{d-1})}^2 \right).
	\end{multline*}
	Now, by this, inequality \eqref{Eq:DerSquared}, and Lemma \ref{Lem:MainEquivBall} we get
	\begin{align*}
	\sum_{i=0}^{d} \| U_i \|^2_{H^k(\B^d)}
	& \gtrsim
	\sum_{n=0}^{k}\sum_{|\alpha|=n} \| |\cdot|\partial^\alpha [u(|\cdot|)] \|_{L^2(\B^d)}^2 
	= \sum_{n=0}^{k}\| |\cdot|^2 \sum_{|\alpha|=n}|\partial^\alpha [u(|\cdot|)]|^2 \|_{L^1(\B^d)} \\
	&\gtrsim  \sum_{n=0}^{k} \| |\cdot|^{2} |u^{(n)}(|\cdot|)|^2 \|_{L^1(\B^d)}
	\simeq	\sum_{n=0}^{k} \| (\cdot)^{\frac{d+1}{2}}u^{(n)} \|^2_{L^2(0,1)}
	\simeq \| u(|\cdot|) \|^2_{H^k(\B^{d+2})}.   		 
	\end{align*}
	For the other direction we prove several estimates. 
	First, for $|\alpha| \leq k $, from Eq.~\eqref{Eq:DerSquared2} and Lemmas \ref{Lem:Hardy_on_[0,1]} and \ref{Lem:MainEquivBall} we have that 
	\begin{equation}\label{Eq:Est1}
	\int_{\B^d}|x|^2 | \partial^\alpha_x u(|x|)|^2dx \lesssim \sum_{n=0}^{|\alpha|}\| (\cdot)^{\frac{d+1}{2}}u^{(n)} \|^2_{L^2(0,1)} \simeq \| u(|\cdot|) \|^2_{H^{|\alpha|}(\B^{d+2})}.
	\end{equation}
	Similarly for $|\beta|\leq k-1$ we have	\begin{align}\label{Eq:Est2}
	\int_{\B^d} x \cdot \nabla_x |\partial^\beta_x u(|x|)|^2 dx &\lesssim \int_{\B^d} |\partial^\beta_x u(|x|)|^2dx + \int_{\B^d}|x|^2 |\nabla_x (\partial^\beta_x u(|x|))|^2dx \nonumber \\ 
	&\lesssim \| u(|\cdot|) \|^2_{H^{|\beta|}(\B^d)} + \| u(|\cdot|) \|^2_{H^{|\beta|+1}(\B^{d+2})}.
	\end{align}
	Also, by Lemmas \ref{Lem:MainEquivBall} and \ref{Lem:Hardy_on_[0,1]} we have that for $n \leq k$
	\begin{align}\label{Eq:Est3}
	\|u(| \cdot |) \|_{H^{n-1}(\B^{d})} &\simeq 
	\sum_{i=0}^{n-1} \| (\cdot)^{\frac{d-1}{2}}u^{(i)} \|_{L^2(0,1)} 
	\lesssim \sum_{i=0}^{n-1} \Big( |u^{(i)}| + \| (\cdot)^{\frac{d+1}{2}}u^{(i+1)} \|_{L^2(0,1)} \Big) \nonumber \\
	&\lesssim \sum_{i=0}^{n-1} |u^{(i)}| + \sum_{i=1}^{n} \| (\cdot)^{\frac{d+1}{2}}u^{(i)} \|_{L^2(0,1)} 
	\lesssim \sum_{i=0}^{n} \| (\cdot)^{\frac{d+1}{2}}u^{(i)} \|_{L^2(0,1)} \nonumber \\
	& \simeq
	\| u(|\cdot|) \|_{H^n(\B^{d+2})}.
	\end{align}
	Finally, from the identity \eqref{Eq:Corot(1)} and Eqs.~\eqref{Eq:Est1}-\eqref{Eq:Est3} it follows that $\| U \|^2_{H^k(\B^d)} \lesssim \| u(|\cdot|) \|^2_{H^k(\B^{d+2})}.$
\end{proof}
By scaling the independent variable we obtain equivalence on balls of arbitrary radius.
\begin{corollary} \label{Cor:Equiv_Cor_R}
	Given non-negative integer $k < \frac{d}{2}+2$ and $R>0$, we have that
	\begin{equation}\label{Eq:Corot_Equiv}
	\| U \|_{H^k(\B^d_R)} \simeq \| u(|\cdot|) \|_{H^k(\B^{d+2}_R)}	
	\end{equation}
	for all $u \in C^{\infty}_e[0,R].$ Furthermore, the implicit constants in Eq.~\eqref{Eq:Corot_Equiv} can be chosen uniformly for all $R$ inside a bounded set which is positively distanced from zero.
\end{corollary}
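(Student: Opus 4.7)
The plan is to deduce Corollary \ref{Cor:Equiv_Cor_R} from Proposition \ref{Prop:Equiv_unit_ball} by a rescaling argument, exploiting the fact that both sides of the proposed equivalence transform with the same power of $R$ under dilation.

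Concretely, given $u \in C^\infty_e[0,R]$ with associated corotational map $U_i(x) = x_i u(|x|)$ on $\B^d_R$, I would introduce the rescaled profile $\tilde u(\rho):= u(R\rho)$ on $[0,1]$ together with the rescaled vector field $V(y):=R^{-1}U(Ry)$ on $\B^d$. A short computation (using $\partial_{y_j}V_i(y) = (\partial_{x_j}U_i)(Ry)$) shows that $V$ is precisely the corotational map on $\B^d$ whose radial profile is $\tilde u$, so Proposition \ref{Prop:Equiv_unit_ball} applies:
\begin{equation*}
\| V \|_{H^k(\B^d)}^2 \simeq \| \tilde u(|\cdot|) \|_{H^k(\B^{d+2})}^2.
\end{equation*}

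Next, I would track the dilation factors for each homogeneous Sobolev seminorm. The chain rule gives $\partial_y^\alpha V(y) = R^{|\alpha|-1}(\partial_x^\alpha U)(Ry)$ and $\partial_y^\alpha[\tilde u(|y|)] = R^{|\alpha|}(\partial_x^\alpha[u(|\cdot|)])(Ry)$; combined with the Jacobian change of variables $dx = R^d dy$ (respectively $R^{d+2}dy$ on $\B^{d+2}$), this yields the matching scalings
\begin{equation*}
\| V \|_{\dot H^j(\B^d)}^2 = R^{2j-d-2}\, \| U \|_{\dot H^j(\B^d_R)}^2, \qquad \| \tilde u(|\cdot|) \|_{\dot H^j(\B^{d+2})}^2 = R^{2j-d-2}\, \| u(|\cdot|) \|_{\dot H^j(\B^{d+2}_R)}^2,
\end{equation*}
for every $0 \le j \le k$. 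The common factor $R^{-d-2}$ cancels between the two sides of the equivalence for $V$ and $\tilde u$.

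Substituting into the equivalence from Proposition \ref{Prop:Equiv_unit_ball} and summing over $j$ therefore gives
\begin{equation*}
\sum_{j=0}^{k} R^{2j}\, \| U \|_{\dot H^j(\B^d_R)}^2 \;\simeq\; \sum_{j=0}^{k} R^{2j}\, \| u(|\cdot|) \|_{\dot H^j(\B^{d+2}_R)}^2,
\end{equation*}
with implicit constants independent of $R$. Restricting $R$ to a bounded set that is positively distanced from zero, the weights $R^{2j}$ for $j=0,1,\ldots,k$ are uniformly bounded above and below by positive constants depending only on that set and on $k$, so they can be absorbed into the implicit constants to produce the desired equivalence $\| U \|_{H^k(\B^d_R)} \simeq \| u(|\cdot|) \|_{H^k(\B^{d+2}_R)}$, uniformly in $R$. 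There is no genuine analytic obstacle; the only care needed is the bookkeeping of the scaling exponents, which works out cleanly precisely because $U$ is defined on $\B^d$ while its profile lives on $\B^{d+2}$, so the Jacobian mismatch is compensated by the extra factor of $R^{-1}$ built into the normalization $V(y)=R^{-1}U(Ry)$.
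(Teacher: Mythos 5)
Your proposal is correct and coincides with the paper's argument: the paper proves Corollary \ref{Cor:Equiv_Cor_R} precisely "by scaling the independent variable," and your rescaling $V(y)=R^{-1}U(Ry)$, $\tilde u(\rho)=u(R\rho)$ together with the bookkeeping of the factors $R^{2j-d-2}$ is exactly the computation that statement leaves implicit. The observation that the weights $R^{2j}$ are uniformly comparable to $1$ for $R$ in a bounded set positively distanced from zero correctly yields the claimed uniformity of the implicit constants.
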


\noindent
\begin{proof}[Proof of Proposition \ref{Prop:Equiv_1forms}]	
	The conclusion follows from Corollary \ref{Cor:Equiv_Cor_R} and the following observation $$ \| \emph{A} \|^2_{H^k(\B^d_R)}=	\sum_{i,j,k}\| \emph{A}^{ij}_k \|^2_{H^k(\B^d_R)} = (d-1) \sum_{i=1}^{d} \| (\cdot)_i u(|\cdot|) \|^2_{H^k(\B^d_R)} = (d-1)\| U \|^2_{H^k(\B^d_R)} .$$
\end{proof}
\begin{remark}
	Due to our crucial reliance on Hardy's inequality throughout this section, we had to impose restrictions on the size of $ k $. For example, extending the proof of Lemma \ref{Lem:MainEquivBall} to $k=\frac{d}{2}+1$ necessitates the estimate \eqref{Eq:Hardy(1)} for $\alpha=-\frac{1}{2}$, which, however,  does not hold. Nonetheless, we point out that there is no obvious reason not to believe that Proposition \ref{Prop:Equiv_unit_ball} and Corollary \ref{Cor:Equiv_Cor_R} hold for all $k \in \mathbb{N}_0$.
\end{remark} 
	 
As a curiosity, we finalize this section with a discussion on corotational maps defined on the whole $\R^d$. We note that by similar reasoning to the one above one can prove an analogous result to Proposition \ref{Prop:Equiv_unit_ball} for such maps. Interestingly, however, by exploiting the Fourier methods (to which we have access since we are treating the full space $\R^d$) we were able to prove the analogue of Proposition \ref{Prop:Equiv_unit_ball} without any size restriction on $k$.

\begin{proposition}\label{Prop:EquivSobNorm_full_space}
	Let $k \in \mathbb{N}_0$. Then
	\begin{equation*}
	\| U \|_{\dot{H}^k(\R^d)} \simeq \| u(|\cdot|) \|_{\dot{H}^k(\R^{d+2})}	
	\end{equation*}
	for all $u$ for which $u(|\cdot|) \in \mc S(\R^d)$.
\end{proposition}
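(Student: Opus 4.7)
The plan is to work on the Fourier side and exploit the classical dimension-ascending formula for Fourier transforms of radial functions. By Plancherel (with our convention $\hat f(\xi)=\int f(x)e^{-2\pi i x\cdot\xi}dx$),
\[
\| U \|_{\dot{H}^k(\R^d)}^2 = \sum_{i=1}^{d}\int_{\R^d} |\xi|^{2k} |\widehat{U_i}(\xi)|^2\, d\xi.
\]
Using $\widehat{x_i u(|\cdot|)}(\xi) = \tfrac{i}{2\pi}\partial_{\xi_i}\widehat{u(|\cdot|)}(\xi)$ together with the fact that $\widehat{u(|\cdot|)}$ is a radial Schwartz function with profile $V(s):=\widehat{u(|\cdot|)}(\xi)\big|_{|\xi|=s}$, we have $\partial_{\xi_i}\widehat{u(|\cdot|)}(\xi) = (\xi_i/|\xi|)V'(|\xi|)$, hence
\[
\sum_{i=1}^{d}|\widehat{U_i}(\xi)|^2 \;=\; \tfrac{1}{4\pi^2}\,|V'(|\xi|)|^2.
\]

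The key input is the classical identity relating Fourier transforms of the same radial profile in dimensions $d$ and $d+2$: if $W(s)$ denotes the profile of $\widehat{u(|\cdot|)}$ as a radial function on $\R^{d+2}$, then
\[
V'(s) \;=\; -2\pi s\, W(s).
\]
This follows from the Bessel-function recurrence $\frac{d}{dt}[t^{-\nu}J_\nu(t)] = -t^{-\nu}J_{\nu+1}(t)$ with $\nu=(d-2)/2$, applied to the Hankel representation
\[
V(s) \;=\; 2\pi s^{-\nu}\!\int_0^\infty u(r)J_\nu(2\pi s r)\,r^{\nu+1}\,dr
\]
after differentiating under the integral sign and comparing with the analogous representation of $W$ in dimension $d+2$ (where $\nu$ is replaced by $\nu+1$). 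The Schwartz assumption on $u(|\cdot|)$ makes every step rigorous.

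Combining these two facts and passing to polar coordinates gives
\[
\| U \|_{\dot{H}^k(\R^d)}^2 \;=\; \int_{\R^d}|\xi|^{2k+2}|W(|\xi|)|^2\,d\xi \;=\; \omega_{d-1}\!\int_0^\infty |W(s)|^2\,s^{2k+d+1}\,ds,
\]
while on the other side Plancherel on $\R^{d+2}$ yields
\[
\| u(|\cdot|) \|_{\dot{H}^k(\R^{d+2})}^2 \;=\; \omega_{d+1}\!\int_0^\infty |W(s)|^2\,s^{2k+d+1}\,ds.
\]
The two expressions agree up to the explicit constant $\omega_{d-1}/\omega_{d+1}$, yielding the claimed equivalence (in fact an identity up to a constant) for every $k\in\mathbb{N}_0$, with no size restriction. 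The only nontrivial ingredient is the dimension-ascending identity $V'(s)=-2\pi s W(s)$, and this is the main obstacle if one prefers to avoid citing the Bessel-transform machinery; it can however be verified directly by a short calculation using the recurrence for $J_\nu$ mentioned above.
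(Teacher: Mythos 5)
Your proof is correct and follows essentially the same route as the paper: pass to the Fourier side, identify $\sum_i|\widehat{U_i}|^2$ with the squared radial derivative of $\widehat{u(|\cdot|)}$, use the Hankel representation together with the Bessel recurrence (your $\frac{d}{dt}[t^{-\nu}J_\nu(t)]=-t^{-\nu}J_{\nu+1}(t)$ is equivalent to the paper's $J_{\nu+1}(z)=-J_\nu'(z)+\frac{\nu}{z}J_\nu(z)$) to recognize this as $|\xi|^2$ times the $(d+2)$-dimensional transform of the same profile, and conclude by Plancherel in both dimensions. The only differences are the choice of Fourier normalization and explicit constants, which are immaterial to the equivalence.
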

\begin{proof}
	We use the Fourier transform based definition of homogeneous Sobolev norms and the Bessel function representation of the Fourier transform of radial functions. To be precise, we use the following definition of the Fourier transform in $d$ dimensions
	\[
	\mathcal{F}_df(\xi) := \int_{\R^d}f(x) e^{-ix\cdot \xi}dx.
	\]
	Then, for a radial Schwartz function $f=\tilde{f}(|\cdot|)$, we have that
	\begin{align}\label{Eq:RadialFourier}
	\mathcal{F}_df(\xi)=\frac{(2\pi)^\frac{d}{2}}{|\xi|^{\frac{d}{2}-1}}\int_{0}^{\infty}\tilde{f}(r)J_{\frac{d}{2}-1}(r|\xi|)r^{\frac{d}{2}}dr,
	\end{align}
	see e.g.~\cite{Gra08}, p.~429. Now, if $u$ is such that $u(|\cdot|) \in \mc S(\R^d)$, then we have that
	\begin{align*}
	\sum_{i=1}^{d}\big(\mathcal{F}_dU_i(\xi)\big)^2& = \sum_{i=1}^{d}\big(\mathcal{F}_d[(\cdot)_i u (|\cdot|)](\xi)\big)^2=\sum_{i=1}^{d}
\big(\partial_{\xi_i}\mathcal{F}_d[u(|\cdot|)](\xi)\big)^2\\
	&=
	|\nabla \mathcal{F}_d[u(|\cdot|)](\xi)|^2
	= \left(\frac{d}{d|\xi|}\mathcal{F}_d[u(|\cdot|)](\xi)\right)^2\\
	&= \left( \frac{(2\pi)^\frac{d}{2}}{|\xi|^{\frac{d}{2}-1}}\int_{0}^{\infty}u(r) \left(J'_{\frac{d}{2}-1}(r|\xi|)-\frac{\frac{d}{2}-1}{r|\xi|}J_{\frac{d}{2}-1}(r|\xi|) \right)r^{\frac{d}{2}+1} dr \right)^2\\
	&= \left( \frac{(2\pi)^\frac{d}{2}}{|\xi|^{\frac{d}{2}-1}}\int_{0}^{\infty}u(r)J_{\frac{d}{2}}(r|\xi|)r^{\frac{d+2}{2}} dr \right)^2 = \frac{|\xi|^2}{(2\pi)^2} \big(\mc F_{d+2}[u(|\cdot|)](\xi)\big)^2.
	\end{align*}
	Here we used the recurrence relation for $J$-Bessel functions
	\[
	J_{\nu+1}(z)=-J'_{\nu}(z)+\frac{\nu}{z}J_{\nu}(z).
	\]
	Finally,
	\begin{align*}
	\| U \|^2_{\dot{H}^k(\R^d)}
	=\sum_{i=1}^{d}\| U_i \|^2_{\dot{H}^k(\R^d)} 
	&\simeq \| |\cdot|^{2k}\sum_{i=1}^d(\mathcal{F}_dU_i)^2\|_{L^1(\R^d)}\\
	&\simeq
	\| |\cdot|^k \mathcal{F}_{d+2}[u(|\cdot|)] \|^2_{L^2(\R^{d+2})} 
	\simeq \| u(|\cdot|) \|^2_{\dot{H}^k(\R^{d+2})}.
	\end{align*}
\end{proof}
\begin{remark}
	Since the Fourier transform based definition of Sobolev spaces allows for fractional orders, the same result holds when $k$ is replaced by any real number greater than $-d/2$. 
\end{remark}

\section{On spectral mapping for compactly perturbed semigroups}\label{App:Semigroup}
\noindent For the statement of our result, we first define the Riesz projection operator. Namely, given an isolated spectral point $\la_0$ of a closed linear operator $\mb L$ on a Banach space $ X$, we define $\mb P_{\la_0} \in \mc B(X)$ by
\begin{equation*}
	\mb P_{\la_0}:= \frac{1}{2\pi i}\int_{\gamma}\mb R_{\mb L}(\la) \, d\la,
\end{equation*}
where $\gamma$ is a positively oriented circle centered at $\la_0$, contained in  the resolvent set of $\mb L$ and which, apart from $\la_0$, contains no other spectral point of $\mb L$ in its interior.
\begin{theorem}\label{Thm:Semigroups}
	Let $(X,\|\cdot\|)$ be a Hilbert space, and let ${\bf L}_0 : \mc D({\bf L}_0) \subseteq X \rightarrow X$ be a closed linear operator that generates a strongly continuous semigroup $ ({\bf S}_0(\tau))_{ \tau \geq 0} \subseteq \mc B(X)$ for which 
	\begin{equation}\label{Eq:S_0}
		\| {\bf S}_0(\tau) {\bf u}\| \leq M e^{\omega_0 \tau} \| {\bf u}\|,
	\end{equation}
	for some $M>0$, $\omega_0 \in \R$ and for all ${\bf u} \in X, \tau \geq 0$. Furthermore, let ${\bf L'}:X \rightarrow X$ be a compact operator. Then the operator 
	$$
	{\bf L}:={\bf L}_0 + {\bf L}': \mc D({\bf L}):= \mc D({\bf L}_0) \subseteq X \rightarrow X
	$$
	generates a strongly continuous semigroup $ ({\bf S}(\tau))_{ \tau \geq 0} \subseteq \mc B(X)$, for which, given any $\varepsilon>0$, the following statements hold.
	 \begin{itemize}
	 	\item [i)] The set
	 	\begin{equation*}
	 		S_\varepsilon := \sigma ({\bf L}) \cap \{ \la \in \C  : \Re \la \geq \omega_0 + \varepsilon   \}
	 	\end{equation*}
	 	consists of finitely many eigenvalues of ${\bf L}$, all of which have finite algebraic multiplicity. \\
	 	\item[ii)] The spectral mapping
	 	\begin{equation}\label{Eq:spec_mapping}
	 		 \sigma({\bf S}(\tau)) \setminus \mathbb{D}_{e^{\tau(\omega_0+\varepsilon)}} = \{ e^{\tau \la} :  \la \in S_\varepsilon  \}
	 	\end{equation}
 		holds for all $\tau > 0$, where $\mathbb{D}_{r}$ stands for the open disk of radius $r$ centered at zero.\\
 		\item[iii)] Let ${\bf P}:= \sum_{\la \in S_\varepsilon} {\bf P}_\la,$ and let $r_\varepsilon := \sup \, \{ \Re \la : \la \in \sigma({\bf L})\setminus S_\varepsilon  \} $.		
 		Then for every $\omega > \max \, \{ \omega_0, r_\varepsilon \}$ there exists $C \geq 1$ such that
 		\begin{equation*}
 			\| {\bf S}(\tau)(1-{\bf P}) {\bf u} \| \leq C e^{\omega \tau} \| (1-{\bf P}) {\bf u} \|,
 		\end{equation*}
 		for all ${\bf u} \in X$ and $\tau \geq 0$.
	 \end{itemize}
\end{theorem}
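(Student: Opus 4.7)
The plan is to reduce everything to classical Fredholm-theoretic statements for the bounded operators $\mb S(\tau)$, by exploiting the compactness of $\mb L'$. Since compact operators on a Hilbert space are bounded, the bounded perturbation theorem (\cite{EngNag00}, Theorem III.1.3) immediately yields that $\mb L$ generates a $C_0$-semigroup $(\mb S(\tau))_{\tau\ge 0}$. The crucial preparatory step is to show that for each $\tau > 0$ the difference $\mb S(\tau) - \mb S_0(\tau)$ is compact. I would derive this from the Duhamel identity
\[
\mb S(\tau) - \mb S_0(\tau) \;=\; \int_{0}^{\tau} \mb S_0(\tau-s)\, \mb L'\, \mb S(s)\, ds,
\]
observing that, in the Hilbert space setting, composing on either side with the compact operator $\mb L'$ upgrades the strong continuity of $\mb S_0$ and $\mb S$ to norm continuity of the integrand; the integral is then a norm limit of finite sums of compact operators and hence itself compact.

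Next I would invoke Weyl's theorem on invariance of the essential spectrum under compact perturbations (applied to the bounded operator level), yielding $\sigma_{\operatorname{ess}}(\mb S(\tau)) = \sigma_{\operatorname{ess}}(\mb S_0(\tau))$. By \eqref{Eq:S_0} one has $\sigma(\mb S_0(\tau)) \subseteq \overline{\mathbb{D}}_{e^{\tau\omega_0}}$, so $\sigma_{\operatorname{ess}}(\mb S(\tau)) \subseteq \overline{\mathbb{D}}_{e^{\tau\omega_0}}$. Consequently, outside the larger disk $\overline{\mathbb{D}}_{e^{\tau(\omega_0+\varepsilon)}}$ the spectrum of $\mb S(\tau)$ consists of only finitely many isolated eigenvalues with finite-dimensional Riesz projections (finiteness following because the complementary region is bounded and at positive distance from the essential spectrum). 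For the spectral mapping identity \eqref{Eq:spec_mapping}, the inclusion $\supseteq$ is the standard spectral inclusion $e^{\tau\sigma(\mb L)}\subseteq\sigma(\mb S(\tau))$. For the reverse inclusion I would use the classical correspondence (see \cite{EngNag00}, Ch.~IV.3) that whenever $\mu\neq 0$ is an isolated eigenvalue of $\mb S(\tau)$ with finite-dimensional Riesz projection, the set $\{\la\in\sigma(\mb L):e^{\tau\la}=\mu\}$ is nonempty and consists entirely of eigenvalues of $\mb L$ of finite algebraic multiplicity, and the Riesz projection of $\mb S(\tau)$ at $\mu$ equals the finite sum of the Riesz projections of $\mb L$ at these $\la$'s. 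This proves (ii) and, by transferring finiteness back through the spectral mapping, simultaneously yields (i).

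For (iii), the finite-rank projection $\mb P=\sum_{\la\in S_\varepsilon}\mb P_\la$ commutes with $\mb S(\tau)$, so $Y:=(1-\mb P)X$ is $\mb S(\tau)$-invariant, and by the Riesz decomposition $\sigma(\mb S(\tau)|_Y)=\sigma(\mb S(\tau))\setminus\{e^{\tau\la}:\la\in S_\varepsilon\}$. Its essential part lies in $\overline{\mathbb{D}}_{e^{\tau\omega_0}}$, while the remaining isolated eigenvalues have the form $e^{\tau\la}$ with $\la\in\sigma(\mb L)\setminus S_\varepsilon$, hence lie in $\overline{\mathbb{D}}_{e^{\tau r_\varepsilon}}$. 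Thus the spectral radius of $\mb S(\tau)|_Y$ is at most $e^{\tau\max\{\omega_0,r_\varepsilon\}}$, so by Hadamard's formula the growth bound of the restricted semigroup does not exceed $\max\{\omega_0,r_\varepsilon\}$, and the claimed estimate follows for every $\omega$ strictly above this threshold. The main obstacle in this plan is the spectral mapping step: full spectral mapping is known to fail for generic $C_0$-semigroups, and it is precisely the compactness of $\mb L'$ that localises any ``new'' spectrum into isolated eigenvalues of finite multiplicity, where the Riesz-projection identification can be carried out and eigenvalues of $\mb S(\tau)$ genuinely lift back to eigenvalues of $\mb L$.
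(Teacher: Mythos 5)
Your proposal is correct in substance, but it travels a genuinely different road than the paper at two points, and one of them deserves a precise reference. The paper first works at the \emph{generator} level: it factorizes $\la-\mb L=(\la-\mb L_0)\big(1-\mb R_{\mb L_0}(\la)\mb L'\big)$ on the half-plane $\{\Re\la>\omega_0\}$ and applies the analytic Fredholm theorem to the compact-operator-valued analytic family $\la\mapsto\mb R_{\mb L_0}(\la)\mb L'$, so the discreteness and finite-multiplicity structure of $\sigma(\mb L)$ in the half-plane comes for free, and only the plain point-spectrum spectral mapping theorem is needed afterwards; finiteness of $S_\varepsilon$ is then forced by using \eqref{Eq:spec_mapping} for two times with irrational ratio ($\tau=1$ and $\tau=\sqrt2$), which rules out infinitely many $\la$'s collapsing onto one eigenvalue of $\mb S(\tau)$. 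You instead stay entirely at the semigroup level (compactness of $\mb S(\tau)-\mb S_0(\tau)$ via Duhamel — the same fact the paper quotes from Engel--Nagel — plus Weyl invariance of the essential spectrum) and then transfer back through the finer classical correspondence: for an isolated eigenvalue $\mu\neq0$ of $\mb S(\tau)$ with finite-rank Riesz projection $Q_\mu$, the set $\{\la\in\sigma(\mb L):e^{\tau\la}=\mu\}$ is \emph{finite}, consists of finite-multiplicity eigenvalues, and $Q_\mu=\sum\mb P_\la$. That statement is true and classical, and it buys you finiteness of $S_\varepsilon$ and the identification of $\mb P$ as a spectral projection of $\mb S(\tau)$ (which you then use in (iii)) in one stroke, avoiding the paper's irrational-time trick; but it is strictly more than the point/residual-spectrum spectral mapping theorem that Engel--Nagel Ch.~IV.3 actually states, so you should either pin down a precise reference (it appears in the quasi-compact-semigroup literature) or include the short proof: decompose $X=\rg Q_\mu\oplus\ker Q_\mu$, note both parts are semigroup-invariant, read off the finitely many logarithms from the bounded generator on the finite-dimensional piece (whose generalized eigenspaces absorb those of $\mb L$), and exclude all other logarithms by spectral inclusion applied to the restricted semigroup on $\ker Q_\mu$. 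With that reference or argument supplied, your parts (i)--(iii) go through; your (iii) is then essentially the paper's argument (restrict to $\ker\mb P$, bound the spectral radius of the restricted semigroup by combining the essential-spectrum bound with the point-spectrum lifting, and conclude via the growth-bound/Hadamard formula).
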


\begin{proof}
 	For the basic operator and semigroup theory facts that we use without proof, we will point to standard references \cite{EngNag00}, \cite{Kat95}, \cite{Sim15}. First, we note that 
	\begin{equation}\label{Eq:sigma(L_0)}
		\sigma ({\bf L}_0) \subseteq \{ \la \in \mathbb{C} : \Re \la \leq \omega_0 \}.
	\end{equation}
Indeed, from \eqref{Eq:S_0} we see that the growth bound of $(\mb S_0(\tau))_{\tau\geq 0}$ is at most $\omega_0$, and therefore by Hadamard's formula we have that
\begin{equation}\label{Eq:rad_S0}
	\frac{1}{\tau}\log r(\mb S_0(\tau)) \leq \omega_0, \quad \text{or equivalently} \quad r(\mb S_0(\tau)) \leq e^{\omega_0\tau},
\end{equation}	
for all $\tau >0$  (see \cite{EngNag00}, p.~251, Proposition 2.2); this then, according to the spectral inclusion relation
	\begin{equation*}
		\{ e^{\tau \la}: \la \in \sigma(\mb L_0) \} \subseteq \sigma(\mb S_0(\tau))
	\end{equation*}
	(see \cite{EngNag00}, p.~276, Theorem 3.6), implies \eqref{Eq:sigma(L_0)}.  
	The fact that $\mb L$ generates a semigroup follows simply from the bounded perturbation theorem (\cite{EngNag00}, p.~158, Theorem 1.3). 
  Now, denote $\mathbb{H}_{\omega_0}:=\{ \la \in \C : \Re \la > \omega_0 \}$. Then for $\la \in \mathbb{H}_{\omega_0}$ we have the following decomposition
  \begin{equation}\label{Eq:Resolvents}
  	\la - \mb L = (\la-\mb L_0) \big(1 -  \mb R_{\mb{L}_0}(\la)\mb L'\big).
  \end{equation}
 Also, from the compactness of $\mb L'$ we have that the map $\la \mapsto  \mb R_{\mb L_0}(\la)\mb L'$ is analytic on $\mathbb{H}_{\omega_0}$, and takes values in the set of compact operators on the Hilbert space $X$. Then by the analytic Fredholm theorem (see \cite{Sim15}, p.~194, Theorem 3.4.13) we infer the existence of a set $S \subseteq \mathbb{H}_{\omega_0}$,  which is discrete (in $\mathbb{H}_{\omega_0}$), and such that  $1 -  \mb R_{\mb{L}_0}(\la)\mb L'$ is invertible on $\mathbb{H}_{\omega_0} \setminus S$. Furthermore, $\la \mapsto \big(1-\mb R_{\mb L_0}(\la)\mb L'\big)^{-1}$ is analytic on $\mathbb{H}_{\omega_0} \setminus S$, with the points in $S$ being poles of finite order with finite rank residues. From this and \eqref{Eq:Resolvents} it follows that $\mb R_{\mb L}(\la)$ exists and is analytic on $\mathbb{H}_{\omega_0} \setminus S$, with $S$ consisting of finite-order poles with finite rank residues. This in turn implies that the elements of $S$ are isolated eigenvalues of $\mb L$ with finite (algebraic) multiplicities. We now prove ii) and then use that to finish the proof of i), i.e., to show that the set $S_\varepsilon \subseteq S$ is finite.
 To establish \eqref{Eq:spec_mapping}, we study the spectra of the semigroups. To start, from the variation of constants formula
	\begin{equation*}
		\mb S(\tau)= \mb S_0(\tau) + \int_{0}^{\tau}\mb S_0(\tau-s)\mb L' \mb S(s)ds
	\end{equation*}
	(see \cite{EngNag00}, p.~161, Corollary 1.7) and the compactness of $\mb L'$ we conclude that $\mb S(\tau)$ is a compact perturbation of $\mb S_0(\tau)$ for every $\tau \geq 0$ (see also \cite{EngNag00}, p.~258, Proposition 2.12). 
	This, by the same reasoning from above, implies  that $\sigma (\mb S(\tau)) \backslash \sigma(\mb S_0(\tau))$ consists of isolated eigenvalues, and from the spectral mapping theorem for the point spectrum (\cite{EngNag00}, p.~277, Theorem 3.7) the claim ii) follows. Furthermore, as the set \eqref{Eq:spec_mapping} is also closed (in $\C$) and bounded, it must be finite, for all $\tau > 0$. Note that this does not immediately imply the finiteness of $S_\varepsilon$, as this set might contain infinitely many points which are mapped into a single one via $\la \mapsto e^{\tau \la}$. This is, however, not possible because \eqref{Eq:spec_mapping} holds for all $\tau >0$. We demonstrate this in the following paragraph.
	
 We argue by contradiction. Namely, let us assume that the set $S_\varepsilon$ is infinite. Then, by finiteness of the set \eqref{Eq:spec_mapping} for $\tau=1$, we conclude
	that there is a point inside that set to which infinitely many points of $S_\varepsilon$ are mapped via $\la \mapsto e^\la$. This in turn implies the existence of $\alpha \in S_\varepsilon$ and an infinite set $A \subseteq \mathbb{Z}$ such that
	\begin{equation*}
		\{ \alpha + 2 \pi i k : k \in A \} \subseteq S_\varepsilon.
	\end{equation*}
	Now, by letting $\tau$ equal $\sqrt{2}$ (or any  positive irrational number) in \eqref{Eq:spec_mapping} we get that the infinite set $\{ e^{\sqrt{2}(\alpha + 2 \pi i k)} : k \in A \}$ is contained in the finite set $\sigma({\bf S}(\sqrt{2})) \setminus \mathbb{D}_{e^{\sqrt{2}(\omega_0+\varepsilon)}}$, a contradiction. This finishes the proof of i).
	
	 It remains to prove iii). First, we note that $\ker \mb P$ and $\rg \mb P$ are closed subspaces of $X$ and the following decomposition holds $X= \ker \mb P  \oplus  \rg \mb P$. Furthermore, the spaces $\ker \mb P$ and $\rg \mb P$ reduce $\mb L$ (i.e., they are invariant under the action of $\mb L$) and we accordingly define $\mb L_1$ and $\mb L_2$ as the restrictions of $\mb L$ on $\ker \mb P$ and $\rg \mb P$ respectively. More precisely, we let 
	$\mc D(\mb L_1):= \mc D(\mb L) \cap \ker \mb P$
	and define
	\begin{equation*}
		\mb L_1: \mc D(\mb L_1) \subseteq \ker \mb P \rightarrow \ker \mb P \quad \text{with} \quad \mb L_1 \mb u:=\mb L \mb u \text{~ for~ } \mb u \in \mc D(\mb L_1), 
	\end{equation*}
	with an analogous definition for $\mb L_2: \mc D(\mb L_2)\subseteq \rg \mb P \rightarrow \rg \mb P$.
	Furthermore, we have the following spectral decomposition
	\begin{equation}\label{Eq:spectrum_splitting}
		\sigma(\mb L_1)=\sigma(\mb L)\setminus S_\varepsilon \quad \text{and} \quad \sigma(\mb L_2)=S_\varepsilon.
	\end{equation}
 For proofs of these standard results, we refer the reader to \cite{Kat95}, Sec.~III.6.4, in particular p.~178, Theorem 6.17. 
 
  Now, the projection $\mb P$ commutes with $\mb L$ and its resolvent (see \cite{Kat95}, p.~173, Theorem 6.5) and therefore with $\mb S(\tau)$ for all $\tau \geq 0$. Consequently, $\ker \mb P$ and $\rg \mb P$ are $(\mb S(\tau))_{\tau\geq 0}$-invariant, closed subspaces of $X$, and therefore restrictions $(\mb S_1(\tau))_{\tau\geq 0}$ and $(\mb S_2(\tau))_{\tau\geq 0}$ on  $\ker \mb P$ and $\rg \mb P$ respectively are also strongly continuous one parameter semigroups of bounded operators on $\ker \mb P$ and $\rg \mb P$, which are furthermore generated by $\mb L_1$ and $\mb L_2$ respectively (see \cite{EngNag00}, p.~60, Sec.~2.3). 
 Furthermore, the restriction of the resolvent of $\mb S(\tau)$ on $\ker \mb P$ yields the resolvent of $\mb S_1(\tau)$, which implies that $\rho(\mb S(\tau)) \subseteq \rho(\mb S_1(\tau))$, or equivalently
 \begin{equation}\label{Eq:spectrum_inclusion}
 	\sigma(\mb S_1(\tau)) \subseteq \sigma(\mb S(\tau)),
 \end{equation} 
for all $\tau \geq 0$. Now, we claim that the growth bound of $(\mb S_1(\tau))_{\tau\geq 0}$ is at most $\max \, \{ \omega_0, r_\varepsilon \} $. Indeed, there would otherwise be a spectral point $\mu$ of $\mb S_1(\tau)$ with $|\mu| > e^{\tau \max \, \{ \omega_0, r_\varepsilon \}}$, which, based on \eqref{Eq:spectrum_inclusion} and \eqref{Eq:spec_mapping}, must be an eigenvalue of $\mb S(\tau)$ and therefore of $\mb S_1(\tau)$ as well, but by the spectral mapping theorem for the point spectrum this implies the existence of an eigenvalue of $\mb L_1$ with the real part greater than $\max \, \{ \omega_0, r_\varepsilon \}$, which is in contradiction with \eqref{Eq:spectrum_splitting}. Therefore, by the definition of growth bound, we conclude that for every $\omega > \max \, \{ \omega_0, r_\varepsilon \}$ there exists $C \geq 1$ such that
\begin{equation*}
	\| \mb S_1(\tau)\mb u \| \leq Ce^{\omega \tau} \|\mb u \|,
\end{equation*}
for all $\mb u \in \ker \mb P$. From this, the claim iii) follows. 
\end{proof}

\emph{Acknowledgment.} The author would like to thank Birgit Sch\"orkhuber and Matthias Ostermann for their useful comments on an early version of the paper.

\bibliography{refs-YM-higher-dim}
\bibliographystyle{plain}

\end{document}